\documentclass[twoside,11pt,hidelinks]{amsart}
\usepackage[utf8]{inputenc}
\usepackage[english]{babel}
\usepackage[fixlanguage]{babelbib}
\usepackage{amssymb, amsfonts, amsmath, amsthm, amscd, mathtools, thmtools, tikz, xcolor, hyperref, a4wide}
\usepackage[numbers,square]{natbib}
\usepackage[normalem]{ulem}

\newcounter{contador}
\numberwithin{contador}{section}

\newtheorem{theorem}[contador]{Theorem}
\newtheorem{prop}[contador]{Proposition}
\newtheorem{lemma}[contador]{Lemma}
\newtheorem{corollary}[contador]{Corollary}

\theoremstyle{definition}
\newtheorem{defi}[contador]{Definition}
\newtheorem{obs}[contador]{Remark}
\newtheorem{exe}[contador]{Example}

\newcommand{\Not}[2][0]{	
	\setcounter{enumi}{#1}
	\renewcommand{\theenumi}{#2\arabic{enumi}}
	\renewcommand{\labelenumi}{(\theenumi)}
	\setlength{\itemindent}{\widthof{#2}}
	\setlength{\itemsep}{4pt}
}

\DeclareRobustCommand{\SkipTocEntry}[5]{}
\makeatletter
    \let\oldsubsection\subsection
    \renewcommand{\subsection}{\@ifstar{\astsection}{\noastsection}}
    
    \newcommand{\astsection}[1]{
        \addtocontents{toc}{\SkipTocEntry}
        \oldsubsection*{#1}
    }
    
    \newcommand{\noastsection}[2][]{
        \ifx\relax#1\relax
            \oldsubsection{#2}
        \else
            \oldsubsection[#1]{#2}
        \fi
    }
\makeatother

\allowdisplaybreaks 

\title{Reflectors and the Globalization Problem for Partial Actions of Semiconstellations}
\author[Haag and Tamusiunas]{Rafael Haag and Thaísa Tamusiunas$^*$}
\address{Instituto de Matem\'{a}tica, Universidade Federal do Rio Grande do Sul,  Av. Bento Gon\c{c}alves, 9500, 91509-900. Porto Alegre-RS, Brazil}
\email{rafaelpetasny@gmail.com}
\email{thaisa.tamusiunas@gmail.com}
\thanks{$^*$ Corresponding author}
\date{} 

\begin{document}

    \subjclass[2020]{Primary 20M75, 18A40; Secondary 08A60, 08A55;} 
    \keywords{Partial actions; globalization; reflective subcategory; semiconstellations; semigroupoids; restriction structures}
    
    \begin{abstract}
      We introduce semiconstellations and restriction semiconstellations, together with their actions on sets, providing a common setting for partial actions of categories, inverse semigroupoids, restriction semigroups, and semigroupoids. We then investigate the \emph{globalization problem}, which asks whether every partial action can, up to isomorphism, be obtained by restricting a global action. A distinctive feature of our approach is the use of reflectors not only to construct globalizations, but also to study when partial actions are globalizable. We show that every partial action of a restriction semiconstellation admits a reflector in the full subcategory of $S$-algebras consisting of what we call \emph{almost global actions}, and investigate conditions under which this reflector provides a globalization. As an application, we obtain a positive answer to the globalization problem for partial actions of restriction semigroupoids on sets, and consequently for restriction semigroups and restriction categories. In contrast, the answer is negative for restriction semiconstellations in general. Finally, given a semiconstellation $S$ and a suitable equivalence relation $R$ on $S$, we introduce the notion of an \emph{$R$-compatible action of} $S$ and construct a restriction semiconstellation $S^R$. We then show that partial and $R$-compatible actions of $S$ correspond, respectively, to partial and global actions of $S^R$. In this way, the globalization problem for semiconstellations can be reduced to the corresponding problem for restriction semiconstellations.
    \end{abstract}

    \maketitle
    \tableofcontents

    \section{Introduction} \label{sec:1}

    A partial action of an algebraic structure $S$ on a set $X$ is, roughly speaking, a function from $S$ to the set of partial transformations of $X$ satisfying appropriate conditions, whereas global actions are the functions that are compatible with the operations of $S$. The aim of this paper is to discuss when a partial action can be obtained from a global one. One of the first occurrences of partial actions in the literature is due to R. Exel \cite{exel1994} in the study of the structure of $C^\ast$-algebras possessing actions of the circle group. In \cite{exel1998partial}, Exel introduced partial actions of abstract groups on sets, providing a more general environment for addressing questions arising in the theory of $C^\ast$-algebras. Since then, the notion of partial actions on sets has been extended to a wide variety of algebraic structures, including monoids \cite{schroder2004}, inverse semigroups \cite{lawson2006expansions}, weakly left $E$-ample semigroups \cite{gould2009partial}, ordered groupoids \cite{gilbert2005actions}, categories \cite{nystedt2017}, inverse semigroupoids \cite{demeneghi2025}, semigroups \cite{kudryavtseva2023}, and semigroupoids \cite{haag2026a}. Further occurrences and applications of partial actions can be found in the survey by M. Dokuchaev \cite{dokuchaev2018}.

    Examples of partial actions of groups on sets arise naturally by restricting global actions. More precisely, an action of a group $G$ on a set $Y$ is a group homomorphism $\beta$ from $G$ to the group of bijections on $Y$. Given a subset $X$ of $Y$, each bijection $\beta_g$ can be restricted to a bijection $\alpha(g) \colon {_gX} \to X_g$, where
        $$ {_gX} = \{ x \in X \colon \beta_g(x) \in X \} = \beta_g^{-1}(X) \cap X \quad\text{and}\quad X_g = \beta_g({_gX}). $$
    The family $\{ \alpha(g) \}_{g \in G}$ obtained in this way defines a partial action of the group $G$ on the set $X$. The globalization problem asks whether every partial action is, up to isomorphism, obtained by restricting a global action in this manner. In this case, the partial action is said to be \textit{globalizable}. The answer to the globalization problem is affirmative for all of the previously mentioned classes of partial actions on sets, extending the corresponding result for partial actions of groups established independently by F. Abadie \cite{abadie2003} and by J. Kellendonk and M. Lawson \cite{lawson2004actions}. On the other hand, when the set on which the action is defined carries additional algebraic or topological structure, the existence of globalizations is no longer guaranteed, such as for partial actions of monoids on topological spaces \cite[Example 3.2]{schroder2004}, partial actions of groups on unital rings \cite[Theorem 4.5]{dokuchaev2004}, and partial actions of groups on semigroups \cite[Example 4.28]{khrypchenko2018}.

    Notably, proving that restrictions of global actions give rise to partial actions has become increasingly difficult as the algebraic structure of the acting object becomes more involved, particularly when its multiplication is only partially defined. For instance, see \cite[Lemma 2.6]{kudryavtseva2023} for semigroup actions and \cite[Proposition 2.14]{haag2026a} for semigroupoid actions. The converse problem is even more subtle: given a partial action, one would like to determine whether it can be realized as the restriction of a global one. This raises the question of whether these problems can be studied within a common framework, rather than treated separately for each class of algebraic structures. This question naturally suggests a categorical approach to the theory of partial actions, which may provide a unified framework for understanding which partial actions are globalizable.


    At this point, it is useful to clarify that the term \emph{restriction} will appear throughout this paper in two distinct senses. On the one hand, it refers to the process of restricting a global action to a suitable subset in order to obtain a partial action. On the other hand, it refers to the algebraic notion of a \emph{restriction structure}, such as a restriction semigroup or a restriction category.

    Restriction structures, in this latter sense, have appeared in the literature under several names and in a variety of contexts. We refer the reader to C. Hollings' survey \cite{hollings2009PP} for the historical development of restriction semigroups and to the work of J. Cockett and S. Lack \cite{cockett2002restriction} for examples of restriction categories and their applications to theoretical computer science.

    In particular, restriction semigroups appeared under the name of weakly left $E$-ample semigroups in \cite{gould2009partial}, where they provided a common framework for partial actions of monoids and inverse semigroups on sets. This unifying role of restriction structures naturally raises the question of whether a similar setting can encompass partial actions of categories, inverse semigroupoids, and restriction semigroups.

    A natural candidate for such a framework is the notion of restriction semigroupoid introduced in \cite{haag2026c}. There is, however, an obstruction. The treatment of partial actions of semigroups relies on the fact that every semigroup $S$ embeds into the monoid $S^1$, and hence into a restriction semigroup. In contrast, not every semigroupoid embeds into a restriction semigroupoid. This indicates that a more flexible setting is required, leading naturally to the notions of \emph{semiconstellation} and \emph{restriction semiconstellation}.

    Accordingly, in order to unify the various generalizations of partial actions on sets, we introduce semiconstellations, an asymmetric and non-associative generalization of semigroupoids, together with the corresponding notion of restriction semiconstellation. Partial and global actions of restriction semiconstellations on sets naturally extend the existing notions for categories, inverse semigroupoids, and restriction semigroups. A key feature of this framework is that every semiconstellation embeds into a restriction semiconstellation. However, unlike the semigroup case, there is no canonical choice of restriction semiconstellation containing a given semiconstellation. Consequently, we obtain a unique definition for partial action of semiconstellation on sets, generalizing the ones for semigroups and semigroupoids, but the corresponding notion of global action depends on the choice of the ambient restriction semiconstellation.

    More precisely, given a semiconstellation $S$ and a suitable equivalence relation $R$ on $S$, we construct a restriction semiconstellation $S^R$ containing $S$ such that global actions of $S^R$ correspond precisely to partial actions of $S$ whose domains are constant on the equivalence classes of $R$. We call such partial actions of $S$ as \emph{$R$-compatible}. This construction recovers the existing notions of global action for both semigroupoids and semigroups. Indeed, global semigroupoid actions as defined in \cite{haag2026a} are precisely the $R'$-compatible actions for a specific equivalence relation $R'$, while global semigroup actions as defined in \cite{kudryavtseva2023} correspond to non-degenerate $(S \times S)$-compatible actions. Moreover, when $S$ is a semigroup, $R'=S \times S$ and $S^{S\times S}=S^1$. Thus, the theories of partial and global actions of semigroups and semigroupoids are recovered as particular cases of the more general theory developed here.

    Our approach also has a natural connection with the theory of constellations. Constellations were introduced by V. Gould and C. Hollings \cite{gould2009restriction} in order to establish an isomorphism  between the category of restriction semigroups and the category of inductive constellations, and their partial actions on sets were subsequently studied by the same authors in \cite{gould2011actions}. More recently, inductive constellations were generalized to locally inductive constellations in \cite{haag2026b}. The theory developed here meets this earlier line of work in a particular class of examples. Every restriction semiconstellation that is also a constellation provides a particular example of a locally inductive constellation. For such examples, the notion of partial action introduced here coincides with the notion of partial action considered by Gould and Hollings in \cite{gould2011actions}. 

    We now turn to the globalization problem for partial actions on sets. Our approach is based on the notion of a reflector. Recall that a \textit{reflector} for $X$ is a pair $(\varphi,Y)$ where $\varphi$ is a morphism from an object $X$ in a category $\mathcal{C}$ to the object $Y$ in a subcategory $\mathcal{D}$ of $\mathcal{C}$, with the property that any morphism from $X$ to an object in $\mathcal{D}$ factors uniquely through $\varphi$. If every object in $\mathcal{C}$ has a reflector in $\mathcal{D}$, then the $\mathcal{D}$ is called a \textit{reflective subcategory} of $\mathcal{C}$.
    
    It is known that if every partial action has a globalization, then the category of global actions is a reflective subcategory of the category of partial actions, and in this case the globalization of a partial action is its reflector. In this paper, however, reflectors play a much more important role. We use the fact that when $\mathcal{C}$ is a category of algebraic structures and structure-preserving maps, reflectors often carry important algebraic information.
    
    Specifically, a reflector of a partial action in the subcategory of global actions determines whether the partial action can be obtained by restricting a global one, thereby providing a categorical solution to the globalization problem. Moreover, whenever such a globalization exists, the reflector explicitly constructs a global action together with an isomorphism between the original partial action and the corresponding restriction of the global action.

    There are connections between this point of view and previous work of M. Khrypchenko and F. Klock. In the context of partial actions of groups on universal algebras \cite{khrypchenko2018}, and of monoids on semigroups and objects of categories with pullbacks \cite{khrypchenko2024,khrypchenko2025}, reflectors also arise in relation to the existence of globalizations.

    With this approach in mind, our first objective is to construct a reflector from the category of partial actions of a restriction semiconstellation into its subcategory of global actions. Unexpectedly, the universal construction arising from this problem does not naturally belong to the category of global actions. Instead, it belongs to a slightly larger category contained in the category of unary partial $S$-algebras, that is, sets endowed with a family of partial maps indexed by $S$. Nevertheless, this construction retains significant information about the globalization problem. In particular, we prove that every globalizable partial action is isomorphic to the restriction of its reflector. 

    We then study the reflectors obtained above in two special cases. First, we establish a sufficient condition under which the reflector is a global action and provides a globalization of the original partial action. This condition is automatically satisfied for restriction semigroupoids, yielding a positive answer to the globalization problem for partial actions of restriction semigroupoids on sets. Second, we consider the case in which the restriction structure of $S$ is particularly simple. In this case, the reflectors are again global actions, but we are able to construct partial actions that are not globalizable. Thus, unlike the case of restriction semigroupoids, not every partial action of a restriction semiconstellation on a set is globalizable.

    Finally, we return to partial actions of semiconstellations, without assuming any additional restriction structure on $S$, and investigate when such an action is isomorphic to the restriction of an $R$-compatible action. We approach this question categorically, by relating the categories of partial and $R$-compatible actions of $S$ to the categories of partial and global actions of the restriction semiconstellation $S^R$. Using this correspondence, we show that a partial action of $S$ is isomorphic to the restriction of an $R$-compatible action if and only if the corresponding partial action of $S^R$ is isomorphic to the restriction of a global action. Consequently, the corresponding globalization problem has a positive answer for partial actions of semigroupoids on sets, but not, in general, for partial actions of semiconstellations on sets.

    This paper is organized as follows. In Section \ref{sec:2}, we recall some results from category theory, universal algebra, and the theory of equivalence relations that will be used throughout the paper. In Section \ref{sec:3}, we introduce semiconstellations, restriction semiconstellations, and actions of restriction semiconstellations on sets. In Section \ref{sec:4}, we study the globalization problem for partial actions of restriction semiconstellations on sets. We construct reflectors for partial actions and investigate two special cases of this construction, obtaining a positive answer to the globalization problem for restriction semigroupoids and a negative answer for restriction semiconstellations in general. In Section \ref{sec:5}, we construct restriction semiconstellations from certain equivalence relations on semiconstellations and apply this construction to obtain a representation theorem for semiconstellations. In Section \ref{sec:6}, we return to semiconstellations without assuming any additional restriction structure. We introduce partial and $R$-compatible actions of semiconstellations on sets and show that the corresponding globalization problem can be reduced to the globalization problem for actions of restriction semiconstellations. In Section \ref{sec:7}, we give special attention to partial actions of semigroups on sets, since the globalization obtained by our construction differs slightly from that of \cite{kudryavtseva2023}. Finally, in Section \ref{sec:8}, we discuss a selection of open questions arising from the methods developed in this paper.
    
    \section{Preliminaries and conventions} \label{sec:2}
    
    In this section, we recall some definitions and results about category theory, universal algebras, and equivalence relations necessary in this paper. Our references are \cite{maclane} for category theory, \cite{gratzer} for universal algebras, and \cite{plemmons} for relations.
    
    \subsection{Categories} We begin by fixing notation and terminology concerning categories, functors, and isomorphisms. We then recall the notion of a free object and some of its basic properties.  \label{sec:categories}
    
    \begin{defi}
        A \textit{directed graph} consists of a class of \textit{objects} $C_0$, a class of \textit{arrows} $C_1$ and two operations $\mathbf{d},\mathbf{r} \colon C_1 \to C_0$, called \textit{domain} and \textit{range}. For an arrow $f \in C_1$, we denote $f \colon X \to Y$ to indicate that $\mathbf{d}(f) = X$ and $\mathbf{r}(f) = Y$.
    \end{defi}
    
    \begin{defi}
        A \textit{category} is a directed graph with two additional operations: the \textit{identity} operation $1 \colon C_0 \to C_1$, which associates each object $X \in C_0$ with an arrow $1_X \colon X \to X$; and a \textit{partial composition} $\circ$, which associates to each pair of arrows $(g,f)$ satisfying $\mathbf{d}(g) = \mathbf{r}(f)$ an arrow $g \circ f \colon \mathbf{d}(f) \to \mathbf{r}(g)$. Furthermore, these operations are subject to the following axioms:
        \begin{enumerate}
            \item \textit{Associativity.} Whenever $f \colon X \to Y$, $g \colon Y \to Z$, and $h \colon Z \to W$, one has
                $$ (h \circ g) \circ f = h \circ (g \circ f). $$
            \item \textit{Unit law.} For every arrow $f \colon X \to Y$, one has $id_Y \circ f = f$ and $f \circ id_X = f$.
        \end{enumerate}
    \end{defi}
    
    In any category, there is a bijective correspondence between objects and identity arrows. More precisely, if an arrow $f$ satisfies $f \circ g = g$ and $h \circ f = h$, whenever these compositions are defined, then there exists a unique object $X$ such that $f = 1_X$. Thus, objects may be identified with their identity arrows, and we shall regard $C_0=\{1_X : X\in C_0\}\subseteq C_1$. Throughout this text, every category $\mathcal{C}$ will be represented in the form $\mathcal{C} = (C;\mathbf{d},\mathbf{r},\circ)$, where $C=C_1$ and $C_0\subseteq C$ is identified with the set of identity arrows. 

    In the remaining of this subsection, $\mathcal{B}$ and $\mathcal{C}$ are categories.
    
    \begin{defi}
        An arrow $f \colon X \to Y$ is called an \textit{isomorphism} if there is an arrow $g \colon Y \to X$ that satisfies $g \circ f = id_X$ and $f \circ g = id_Y$. In this case, we denote $g = f^{-1}$. We say that two objects $X,Y \in C$ are \textit{isomorphic} and denote $X \simeq Y$ if there is an isomorphism $f \colon X \to Y$.
    \end{defi}
    
    The categories relevant to this paper will be introduced in later sections. Many of them will arise as subcategories of previously defined categories. We recall the corresponding definition.
    
    \begin{defi}
        A subset $S\subseteq C$ is called a \textit{subcategory} of $\mathcal{C}$ if the following conditions hold:
        \begin{enumerate}
        \item for every $f\in S$, one has $\mathbf{d}(f),\mathbf{r}(f)\in S$;
        \item whenever $f,g\in S$ and $g\circ f$ is defined in $\mathcal{C}$, one has
        $g\circ f\in S$.
        \end{enumerate}
        A subcategory $S$ is said to be \textit{full} if, whenever $X,Y\in S$ are objects and $f\colon X\to Y$ is an arrow of $\mathcal{C}$, then $f\in S$.
    \end{defi}
    
    In other words, a full subcategory of $\mathcal{C}$ is a subcategory $S$ containing every arrow of $\mathcal{C}$ whose domain and range belong to $S$. By definition, if $S$ is a subcategory of $\mathcal{C}$, then $\mathcal{S} = (S;\mathbf{d},\mathbf{r},\circ)$ is itself a category.
    
    \begin{defi}
        A \textit{covariant functor} from $\mathcal{C}$ to $\mathcal{B}$, denoted $F \colon \mathcal{C} \to \mathcal{B}$, is a map that associates to each arrow $f \in C$ an arrow $Ff \in B$ such that:
        \begin{enumerate}
            \item $F\mathbf{d}(f) = \mathbf{d}(Ff)$ and $F\mathbf{r}(f) = \mathbf{r}(Ff)$, for all $f \in C$.
            \item $F(g \circ f) = Fg \circ Ff$, whenever $\mathbf{d}(g) = \mathbf{r}(f)$.
        \end{enumerate}
        A covariant functor is called \textit{fully faithful} if, for every pair of objects $X,Y \in \mathcal{C}$, the map $F$ induces a bijection between the arrows $X \to Y$ and the arrows $FX \to FY$.
    \end{defi}
    
    In the literature, there is also the notion of \textit{contravariant functor}, but such functors will not be used in this paper. Therefore, whenever we say that ``$F$ is a functor'', we mean that $F$ is a covariant functor. It is straightforward to verify that the composition of functors $F \colon \mathcal{C} \to \mathcal{B}$ and $G \colon \mathcal{B} \to \mathcal{A}$, given by $(G \circ F)(f) = GFf$, for each arrow $f \in C$, defines a functor $\mathcal{C} \to \mathcal{A}$. Furthermore, for every subcategory $\mathcal{S}$ of $\mathcal{C}$, the inclusion map $I \colon \mathcal{S} \to \mathcal{C}$ is a functor, called the \emph{inclusion functor}. In particular, if $\mathcal{S}$ is a full subcategory of $\mathcal{C}$, then the inclusion functor is fully faithful.
    
    \begin{defi}
        Let $U \colon \mathcal{B} \to \mathcal{C}$ be a functor. Given objects $Y \in B$, $X \in C$ and an arrow $\iota \colon X \to UY$, we say that the pair $(\iota,Y)$ is a \textit{free object} over $X$ if, for every object $Y' \in B$ and every arrow $\varphi \colon X \to UY'$, there is a unique arrow $\Phi \colon Y \to Y'$ such that $U\Phi \circ \iota = \varphi$. This property is equivalent to the commutativity of the following diagram:
        \begin{center}
            \begin{tikzpicture}
                \tikzstyle{every path} = [draw,->];
    
                \node (X) at (0,0) {$X$};
                \node (FY) at (2,0) {$UY$};
                \node (FY') at (2,-2) {$UY'$};
                \node (Y) at (4,0) {$Y$};
                \node (Y') at (4,-2) {$Y'$};
    
                \path (X) to node[above]{$\iota$} (FY);
                \path (X) to node[below left]{$\varphi$} (FY');
                \path[dashed] (Y) to node[right]{$\Phi$} (Y');
                \path (FY) to node[right]{$U\Phi$} (FY');
            \end{tikzpicture}
        \end{center}
        In the special case in which $\mathcal{B}$ is a full subcategory of $\mathcal{C}$ and $U \colon \mathcal{B} \to \mathcal{C}$ is the inclusion functor, the pair $(\iota,Y)$ is called a \textit{reflector} for $X$ in $\mathcal{B}$.
    \end{defi}
    
    The following results are straightforward from the uniqueness of the arrow $\Phi$.
    
    \begin{lemma}
        Let $U \colon \mathcal{B} \to \mathcal{C}$ be a functor and $(\iota,Y)$ be free over $X$. Then $(\iota',Y')$ is free over $X$ if and only if there is an isomorphism $\varphi \colon Y \to Y'$ such that $U\varphi \circ \iota = \iota'$.
    \end{lemma}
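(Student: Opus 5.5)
The plan is to prove both directions directly from the universal property, relying repeatedly on the uniqueness of the induced arrow $\Phi$ in the definition of a free object.

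\emph{($\Rightarrow$)} Suppose $(\iota,Y)$ and $(\iota',Y')$ are both free over $X$.
\begin{enumerate}
\item Apply the universal property of $(\iota,Y)$ to the arrow $\iota' \colon X \to UY'$. This gives a unique $\varphi \colon Y \to Y'$ with $U\varphi \circ \iota = \iota'$.
\item Symmetrically, apply the universal property of $(\iota',Y')$ to $\iota$. This gives a unique $\psi \colon Y' \to Y$ with $U\psi \circ \iota' = \iota$.
\item By functoriality, $U(\psi\circ\varphi)\circ\iota = U\psi\circ U\varphi\circ\iota = U\psi\circ\iota' = \iota$.
\item The identity $1_Y$ also satisfies $U1_Y\circ\iota=\iota$, since functors preserve identities: the paper's condition $F\mathbf{d}(f)=\mathbf{d}(Ff)$, with objects identified with identity arrows, gives $U1_Y = 1_{UY}$.
\item Uniqueness in the universal property of $(\iota,Y)$, applied to the arrow $\iota$ itself, forces $\psi\circ\varphi=1_Y$. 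The same argument with the roles exchanged gives $\varphi\circ\psi=1_{Y'}$.
\end{enumerate}
Hence $\varphi$ is an isomorphism with $U\varphi\circ\iota=\iota'$.

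\emph{($\Leftarrow$)} Suppose $\varphi\colon Y\to Y'$ is an isomorphism with $U\varphi\circ\iota=\iota'$, and let $\theta\colon X\to UY''$ be arbitrary.
\begin{enumerate}
\item \emph{Existence.} Freeness of $(\iota,Y)$ gives a unique $\Phi\colon Y\to Y''$ with $U\Phi\circ\iota=\theta$. Set $\Phi'=\Phi\circ\varphi^{-1}$. Then
$$U\Phi'\circ\iota' = U\Phi\circ U\varphi^{-1}\circ U\varphi\circ\iota = U\Phi\circ\iota=\theta,$$
using $U\varphi^{-1}\circ U\varphi = U(\varphi^{-1}\circ\varphi)=U1_Y=1_{UY}$.
\item \emph{Uniqueness.} If $\Psi\colon Y'\to Y''$ also satisfies $U\Psi\circ\iota'=\theta$, then $U(\Psi\circ\varphi)\circ\iota=U\Psi\circ\iota'=\theta$. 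Uniqueness for $(\iota,Y)$ gives $\Psi\circ\varphi=\Phi$, hence $\Psi=\Phi\circ\varphi^{-1}=\Phi'$.
\end{enumerate}
Therefore $(\iota',Y')$ is free over $X$.

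The only point needing care is that functors preserve identities. This is not listed as a separate axiom in the paper's definition of functor, but it follows from the first axiom under the paper's identification of objects with identity arrows. I expect no real obstacle beyond this.
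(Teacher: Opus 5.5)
Your proof is correct and is exactly the standard uniqueness-of-$\Phi$ argument the paper has in mind. The paper writes out no proof and only remarks that the lemma is straightforward from the uniqueness of $\Phi$. Your check that $U1_Y = 1_{UY}$ follows from domain preservation, under the identification of objects with identity arrows, is a legitimate detail that the paper leaves implicit.
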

    
    The existence of free objects induces a ``free construction'' functor.
    
    \begin{prop} \label{prop:livre}
        Let $U \colon \mathcal{B} \to \mathcal{C}$ be a functor, and let $\mathcal{C}' \subseteq \mathcal{C}$ be the full subcategory consisting of
        the objects $X$ that have a free object $(\iota_X,FX)$. For each arrow $f \colon X \to Y$, where $X,Y \in \mathcal{C}'$, let $Ff \colon FX \to FY$ be the unique arrow in $B$ satisfying $UFf \circ \iota_X = \iota_Y \circ f$. Then:
        \begin{enumerate}
            \item $f \mapsto Ff$ defines a functor $F \colon \mathcal{C}' \to \mathcal{B}$.
            \item If $U$ is fully faithful, then $(id_{UY},Y)$ is free over $UY$ and $Y \simeq FUY$, for every $Y \in \mathcal{B}$.
        \end{enumerate}
    \end{prop}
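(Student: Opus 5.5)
Both parts follow from the uniqueness clause in the definition of a free object; I will prove them in order.

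For (1), first check that $F$ respects domain and range: for $f\colon X\to Y$ with $X,Y\in\mathcal{C}'$, $Ff$ is an arrow $FX\to FY$ by construction. Identities are preserved because, for an object $X$ of $\mathcal{C}'$, the arrow $1_{FX}$ satisfies $U1_{FX}\circ\iota_X=1_{UFX}\circ\iota_X=\iota_X=\iota_X\circ 1_X$. Here I use that $U$, being a functor, sends identities to identities. By the uniqueness in the free property of $(\iota_X,FX)$, applied with $\varphi=\iota_X\circ 1_X$, we get $F1_X=1_{FX}$.

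For composition, take $f\colon X\to Y$ and $g\colon Y\to Z$ in $\mathcal{C}'$. Using functoriality of $U$ and the defining equations of $Ff$ and $Fg$,
$$U(Fg\circ Ff)\circ\iota_X = UFg\circ UFf\circ\iota_X = UFg\circ\iota_Y\circ f=\iota_Z\circ g\circ f.$$
So $Fg\circ Ff$ is an arrow $FX\to FZ$ satisfying the equation that characterises $F(g\circ f)$, namely the one for $\varphi=\iota_Z\circ(g\circ f)\colon X\to UFZ$. Uniqueness then gives $F(g\circ f)=Fg\circ Ff$. Since $\mathcal{C}'$ is a full subcategory and hence a category, this shows $F$ is a functor.

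For (2), fix $Y\in\mathcal{B}$ and show that $(1_{UY},Y)$ is free over $UY$. Let $Y'\in\mathcal{B}$ and $\varphi\colon UY\to UY'$. Because $U$ is fully faithful, there is a unique $\Phi\colon Y\to Y'$ with $U\Phi=\varphi$, and this is exactly the condition $U\Phi\circ 1_{UY}=\varphi$. Existence comes from fullness and uniqueness from faithfulness. Hence $UY\in\mathcal{C}'$. Now $(\iota_{UY},FUY)$ is by definition also free over $UY$, so the preceding lemma (the uniqueness of free objects up to isomorphism) gives an isomorphism $Y\simeq FUY$, compatible with the structure maps.

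I do not expect any real obstacle. The only point needing care is in (2): one must note that $UY$ actually lies in $\mathcal{C}'$ before $FUY$ makes sense. Since $\mathcal{C}'$ is defined as the full subcategory of objects having a free object, this is immediate once the free object $(1_{UY},Y)$ has been exhibited.
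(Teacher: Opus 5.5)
Your proof is correct. The paper states this proposition without proof and treats it as a standard consequence of the uniqueness of $\Phi$ in the definition of a free object, which is exactly your argument: identities and composition follow by uniqueness, and part (2) follows from full faithfulness together with the preceding lemma on uniqueness of free objects up to isomorphism.
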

    
    \subsection{Universal algebras} We recall the definition of partial algebras, subalgebras and relative subalgebras, morphisms between partial algebras, and the category of partial algebras. Lastly, we characterize isomorphisms in the category of partial algebras. \label{sec:algebras}\\
    
    In the following, $G^n$ denotes the cartesian product of $n$ copies of a set $G$, and $I$ denotes a set of indexes. By a \textit{partial $n$-ary operation} on a set $G$ we mean a function $f \colon S \subseteq G^n \to G$. In this case, we denote $f(a_1,\dots,a_n) \neq \emptyset$ to indicate that $(a_1,\dots,a_n) \in S$. Furthermore, we fix a family $\Omega = (n_\gamma)_{\gamma \in I}$ of positive integers, called a \textit{type}.
    
    \begin{defi}
        A \textit{partial $\Omega$-algebra} is a pair $\mathcal{G} = (G;\{g_{\gamma}\}_{\gamma  \in I})$ where $G$ is a set, and each $g_\gamma$ is a partial $n_{\gamma}$-ary operation on $G$. In this case, we say that $\mathcal{G}$ is of type $\Omega$.
    \end{defi}

    For simplicity, we shall simply say that ``$\mathcal{G}$ is an $\Omega$-algebra'', with the understanding that its operations may be partially defined. To avoid unnecessary repetitions, all results in this subsection will be stated for algebras of type $\Omega$. However, our main interest lies in algebras of types $(2)$, $(2,1)$, and $(1)_{\gamma\in I}$. In later sections, we introduce more convenient notation for these particular types of algebras. Throughout the remainder of this subsection, $\mathcal{G} = (G;\{g_\gamma\}_{\gamma \in I})$ and $\mathcal{H} = (H;\{h_\gamma\}_{\gamma \in I})$ denote $\Omega$-algebras.
    
    \begin{defi}
        A \textit{$\Omega$-subalgebra} of $\mathcal{G}$ is a subset $S \subseteq G$ such that, whenever $(a_1,\dots,a_{n_\gamma}) \in S^{n_\gamma}$ and $g_\gamma(a_1,\dots,a_{n_\gamma}) \neq \emptyset$, one has $g_\gamma(a_1,\dots,a_{n_\gamma}) \in S$. In this case, we denote $S \leq \mathcal{G}$.
    \end{defi}
    
    There is another notion of subalgebra that will play an important role in this work.
    
    \begin{defi}
        Let $S \subseteq G$. The \textit{relative $\Omega$-subalgebra} $\mathcal{G}|_S$ of $\mathcal{G}$ is the $\Omega$-algebra $(S;(s_\gamma)_{\gamma \in I})$ where, for each $(a_1,\dots,a_{n_\gamma}) \in S^{n_\gamma}$,
            $$ s_\gamma(a_1,\dots,a_{n_\gamma}) \neq \emptyset \iff [g_\gamma(a_1,\dots,a_{n_\gamma}) \neq \emptyset \quad\text{and}\quad g_\gamma(a_1,\dots,a_{n_\gamma}) \in S], $$
        and in this case $s_\gamma(a_1,\dots,a_{n_\gamma}) = g_\gamma(a_1,\dots,a_{n_\gamma})$.
    \end{defi}

    Note that every $\Omega$-subalgebra and every relative $\Omega$-subalgebra of $\mathcal{R}$ is itself an $\Omega$-algebra. Moreover, a relative $\Omega$-subalgebra $\mathcal{G}|_S$ is an $\Omega$-subalgebra if and only if, whenever $g_\gamma(a_1,\dots,a_{n_\gamma})$ is defined for elements $a_1,\dots,a_{n_\gamma}\in S$, one has $g_\gamma(a_1,\dots,a_{n_\gamma})\in S$.
    
    \begin{defi}
        A \textit{$\Omega$-morphism} from $\mathcal{G}$ to $\mathcal{H}$ is a function $\varphi \colon G \to H$ such that
            $$ \forall \gamma \in I, [g_\gamma(a_1,\dots,a_{n_\gamma}) \neq \emptyset \implies h_\gamma(\varphi(a_1),\dots,\varphi(a_{n_\gamma})) \neq \emptyset], $$
        and in this case $\varphi(g_\gamma(a_1,\dots,a_{n_\gamma})) = h_\gamma(\varphi(a_1),\dots,\varphi(a_{n_\gamma}))$.
    \end{defi}
    
    It is easy to see that if $\mathcal{G}$, $\mathcal{H}$ and $\mathcal{K}$ are $\Omega$-algebra and $\varphi \colon \mathcal{G} \to \mathcal{H}$ and $\psi \colon \mathcal{H} \to \mathcal{K}$ are $\Omega$-morphism, then the composition $\psi \circ \varphi \colon \mathcal{G} \to \mathcal{K}$ and the identity function $id_G \colon \mathcal{G} \to \mathcal{G}$ are also $\Omega$-morphisms.
    
    \begin{lemma}
        The quintuple $Alg(\Omega) = (Alg(\Omega)_0,Alg(\Omega)_1,\mathbf{d},\mathbf{r},\circ)$ is a category, where $Alg(\Omega)_0$ is the class of all $\Omega$-algebras, $Alg(\Omega)_1$ is the class of all $\Omega$-morphism and $\mathbf{d}$, $\mathbf{r}$ and $\circ$ are the usual domain, range, and composition of functions. In this case, the identity operation is given by $1_\mathcal{G} = id_G$, for all $\mathcal{G} \in Alg(\Omega)_0$.
    \end{lemma}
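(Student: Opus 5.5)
To prove that $Alg(\Omega)$ is a category, I will verify the data and axioms of the definition of a category directly. Objects are the $\Omega$-algebras and arrows are the $\Omega$-morphisms. Each arrow is regarded as a triple consisting of a domain algebra, a codomain algebra and an underlying function; this keeps $\mathbf{d}$ and $\mathbf{r}$ well defined, since the same function may be a morphism between several pairs of algebras. For a morphism $\varphi\colon\mathcal{G}\to\mathcal{H}$ we set $\mathbf{d}(\varphi)=\mathcal{G}$ and $\mathbf{r}(\varphi)=\mathcal{H}$. The identity on $\mathcal{G}$ is $id_G$, and composition of $\varphi\colon\mathcal{G}\to\mathcal{H}$ and $\psi\colon\mathcal{H}\to\mathcal{K}$ is the function composite $\psi\circ\varphi$, regarded as an arrow $\mathcal{G}\to\mathcal{K}$.

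The first step is closure. I will check that $id_G$ is an $\Omega$-morphism and that $\psi\circ\varphi$ is again an $\Omega$-morphism, which is the remark stated just before the lemma. For composites, suppose $g_\gamma(a_1,\dots,a_{n_\gamma})\neq\emptyset$. Because $\varphi$ is a morphism, $h_\gamma(\varphi(a_1),\dots,\varphi(a_{n_\gamma}))\neq\emptyset$, and $\varphi$ maps $g_\gamma(a_1,\dots,a_{n_\gamma})$ to that value. Applying the same property to $\psi$ with the arguments $\varphi(a_i)$ gives $k_\gamma(\psi\varphi(a_1),\dots,\psi\varphi(a_{n_\gamma}))\neq\emptyset$. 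The two equalities chain to give $\psi\varphi(g_\gamma(\dots))=k_\gamma(\psi\varphi(a_1),\dots)$. The identity case is immediate, since the definedness condition and the equality are both tautological.

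The second step is the axioms. The domain and range of $\psi\circ\varphi$ are $\mathcal{G}$ and $\mathcal{K}$ by construction, and $\mathbf{d}(id_G)=\mathbf{r}(id_G)=\mathcal{G}$. Associativity and the unit laws hold because they hold for composition of set functions, and equality of arrows amounts to equality of underlying functions once the domains and codomains agree.

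There is no real obstacle here. The only point needing care is the bookkeeping that an arrow carries its domain and codomain algebras, and not merely the underlying function, so that $\mathbf{d}$ and $\mathbf{r}$ are well defined. Size issues, where $Alg(\Omega)_0$ is a proper class, are harmless because the definition of a category allows classes.
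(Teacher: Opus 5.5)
Your proposal is correct and matches the paper's argument. The paper gives no separate proof: it only remarks, just before the lemma, that composites of $\Omega$-morphisms and identity functions are again $\Omega$-morphisms, and leaves associativity and the unit laws to be inherited from composition of functions, which is exactly what you verify. Your requirement that an arrow carry its domain and codomain algebras is a sensible refinement that the paper glosses over, since distinct $\Omega$-algebras on the same underlying set $G$ would otherwise share the identity function $id_G$.
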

    
    Since $\Omega$-algebras and $\Omega$-morphisms form a category, the notion of isomorphism of $\Omega$-algebras is understood in the categorical sense. That is, $\mathcal{G} \simeq \mathcal{H}$ if and only if there exist $\Omega$-morphisms $\varphi \colon \mathcal{G} \to \mathcal{H}$ and $\varphi^{-1} \colon \mathcal{H} \to \mathcal{G}$ such that $\varphi^{-1} \circ \varphi = 1_{\mathcal{G}}$ and $\varphi \circ \varphi^{-1} = 1_{\mathcal{H}}$. We are particularly interested in isomorphisms between $\mathcal{G}$ and $\Omega$-subalgebras or relative $\Omega$-subalgebras of $\mathcal{H}$.
    
    \begin{defi}
        Let $\varphi \colon \mathcal{G} \to \mathcal{H}$ be a $\Omega$-morphism. We say that $\varphi$ is \textit{full} if for every $a_1,\dots,a_{n_\gamma}\in G$ satisfying
        \begin{align*}
            h_\gamma(\varphi(a_1),\dots,\varphi(a_{n_\gamma})) \neq \emptyset \quad\text{and}\quad h_\gamma(\varphi(a_1),\dots,\varphi(a_{n_\gamma})) \in \varphi(G), \end{align*} there exist $b_1,\dots,b_{n_\gamma}\in G$ such that
           \begin{align*} g_\gamma(b_1,\dots,b_{n_\gamma}) \neq \emptyset \quad\text{and}\quad \varphi(b_i) = \varphi(a_i).
        \end{align*}
        We say that $\varphi$ is \textit{strong} if for every $a_1,\dots,a_{n_\gamma}\in G$, $h_\gamma(\varphi(a_1),\dots,\varphi(a_{n_\gamma})) \neq \emptyset$ implies $g_\gamma(a_1,\dots,a_{n_\gamma}) \neq \emptyset$.
    \end{defi}

    Note that if $\mathcal{H}' \leq \mathcal{H}$, then the inclusion function $\iota \colon \mathcal{H}' \to \mathcal{H}$ is an injective strong $\Omega$-morphism. Similarly, for every subset $S$ of $H$, the inclusion function $\iota \colon \mathcal{H}|_S \to \mathcal{H}$ is an injective full $\Omega$-morphism. For any function $\varphi \colon G \to H$ such that $\varphi(G) \subseteq H' \subseteq H$, we have $\varphi = \iota \circ \varphi'$, where $\iota \colon H' \to H$ is the inclusion function and $\varphi' \colon G \to H'$ is defined to be equal to $\varphi$ point-wise. The function $\varphi'$ is the \textit{corestriction} of $\varphi$ to $H'$. The following results are straightforward.
    
    \begin{prop} \label{prop:universal}
        Let $\varphi \colon \mathcal{G} \to \mathcal{H}$ be a $\Omega$-morphism. Then:
        \begin{enumerate}
            \item $\mathcal{G} \simeq \mathcal{H}$ if and only if $\varphi$ is bijective strong if and only if $\varphi$ is bijective full.
            
            \item $\varphi(G) \leq \mathcal{H}$ and $\mathcal{G} \simeq \varphi(G)$ if and only if $\varphi$ is injective strong.
            
            \item $\mathcal{G} \simeq \mathcal{H}|_{\varphi(G)}$ if and only if $\varphi$ is injective full.
        \end{enumerate}
        The isomorphisms are understood to be given by the corestriction of $\varphi$ to $\varphi(G)$.
    \end{prop}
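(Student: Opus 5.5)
The plan is to prove (3) first, since (1) and (2) follow from it. Throughout, write $\varphi'\colon G\to\varphi(G)$ for the corestriction of $\varphi$.

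\textbf{Part (3).} Suppose $\varphi$ is injective and full. Then $\varphi'$ is a bijection onto $\varphi(G)$, and we must show it is an $\Omega$-isomorphism $\mathcal{G}\to\mathcal{H}|_{\varphi(G)}$.
\begin{enumerate}
\item $\varphi'$ is a morphism. If $g_\gamma(a_1,\dots,a_{n_\gamma})\neq\emptyset$, then $h_\gamma(\varphi(a_i))$ is defined and equals $\varphi(g_\gamma(a_i))\in\varphi(G)$. So the relative operation is defined on the $\varphi(a_i)$, with the correct value.
\item $(\varphi')^{-1}$ is a morphism. Suppose the relative operation is defined on $\varphi(a_1),\dots,\varphi(a_{n_\gamma})$, so $h_\gamma(\varphi(a_i))\neq\emptyset$ and lies in $\varphi(G)$. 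Fullness gives $b_i$ with $\varphi(b_i)=\varphi(a_i)$ and $g_\gamma(b_i)\neq\emptyset$. Injectivity forces $b_i=a_i$, so $g_\gamma(a_i)$ is defined. Its image is $h_\gamma(\varphi(a_i))$ because $\varphi$ is a morphism, and injectivity makes the values match.
\end{enumerate}
For the converse, suppose $\varphi'$ is an isomorphism onto $\mathcal{H}|_{\varphi(G)}$. Then $\varphi$ is injective. If $h_\gamma(\varphi(a_i))$ is defined and lies in $\varphi(G)$, the relative operation is defined there. Applying the morphism $(\varphi')^{-1}$ shows $g_\gamma(a_i)$ is defined, so we may take $b_i=a_i$, and $\varphi$ is full.

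\textbf{Part (2).} Assume $\varphi$ is injective and strong. Strong implies full: take $b_i=a_i$. So (3) gives $\mathcal{G}\simeq\mathcal{H}|_{\varphi(G)}$. It remains to show $\varphi(G)$ is a subalgebra, i.e.\ $\mathcal{H}|_{\varphi(G)}$ coincides with the subalgebra structure on $\varphi(G)$. If $h_\gamma(\varphi(a_i))\neq\emptyset$, strongness gives $g_\gamma(a_i)\neq\emptyset$, hence $h_\gamma(\varphi(a_i))=\varphi(g_\gamma(a_i))\in\varphi(G)$.

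Conversely, suppose $\varphi(G)\leq\mathcal{H}$ with $\varphi'$ an isomorphism onto this subalgebra. If $h_\gamma(\varphi(a_i))$ is defined, closure puts it in $\varphi(G)$, so it is defined in the subalgebra. The inverse morphism then yields $g_\gamma(a_i)\neq\emptyset$, so $\varphi$ is strong.

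\textbf{Part (1).} If $\varphi$ is bijective, then $\varphi(G)=H$ and $\mathcal{H}|_H=\mathcal{H}$. Fullness and strongness also coincide: when $\varphi$ is injective and the result automatically lies in $\varphi(G)=H$, the definition of full collapses to that of strong. So (3) gives that $\varphi'=\varphi$ is an isomorphism iff $\varphi$ is bijective full, iff bijective strong.

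Finally, if some isomorphism $\mathcal{G}\simeq\mathcal{H}$ exists, the hypothesis should be read, per the closing sentence of the statement, as $\varphi$ itself being the isomorphism. Then $\varphi$ is bijective, and strongness follows from the inverse being a morphism.

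The main obstacle is bookkeeping rather than mathematics. The key point is that the existential witnesses $b_i$ in the definition of full collapse to the $a_i$ by injectivity; that is the only place injectivity is essential.
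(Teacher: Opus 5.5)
Your proof is correct. The paper gives no proof at all, only remarking that the result is straightforward, and your definition-chasing is exactly that routine verification: you prove (3) directly, get (2) from (3) plus the closure check, and get (1) from (3) because $\varphi(G)=H$ makes full and strong coincide. You are also right that (1) has to be read with $\varphi$ itself as the isomorphism, as the paper's closing sentence indicates; otherwise the forward implication would fail for any non-injective endomorphism of an algebra.
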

    
    The existence of a reflector for $\mathcal{G}$ in a subcategory $\mathcal{C}$ of $Alg(\Omega)$ determines when $\mathcal{G}$ can be isomorphic to a (relative) $\Omega$-subalgebra of some $\Omega$-algebra in $\mathcal{C}$.
    
    \begin{prop} \label{prop:universal-2}
        Let $\mathcal{C}$ be a subcategory of $Alg(\Omega)$, $U \colon \mathcal{C} \to Alg(\Omega)$ be the inclusion functor, and $\varphi \colon \mathcal{G} \to U\mathcal{H}$ be a $\Omega$-morphism. Suppose that there exists a reflector $(\iota,F\mathcal{G})$ for $\mathcal{G}$ in $\mathcal{C}$.
        \begin{enumerate}
            \item If $\varphi$ is injective, then $\iota$ is injective.
            \item If $\varphi$ is full, then $\iota$ is full.
            \item If $\varphi$ is strong, then $\iota$ is strong.
        \end{enumerate}
        Therefore, if $\mathcal{G}$ is isomorphic to a (relative) $\Omega$-subalgebra of $U\mathcal{H}$, then it must also be isomorphic to a (relative) $\Omega$-subalgebra of $UF\mathcal{G}$.
    \end{prop}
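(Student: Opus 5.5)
The approach is to use the factorization property of the reflector directly. Since $\mathcal{H}$ lies in $\mathcal{C}$ and $(\iota,F\mathcal{G})$ is a reflector, there is a unique $\Omega$-morphism $\Phi \colon F\mathcal{G} \to \mathcal{H}$ with $U\Phi \circ \iota = \varphi$. Because $U$ is the inclusion, this simply says $\varphi = \Phi \circ \iota$ as functions. Each of the three properties of $\varphi$ will then be transferred back to $\iota$ through this factorization.

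\emph{Injectivity.} If $\iota(a) = \iota(b)$, then $\varphi(a) = \Phi(\iota(a)) = \Phi(\iota(b)) = \varphi(b)$, so $a = b$.

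\emph{Strongness.} Write $f_\gamma$ for the operations of $F\mathcal{G}$ and $h_\gamma$ for those of $\mathcal{H}$. Suppose $f_\gamma(\iota(a_1),\dots,\iota(a_{n_\gamma})) \neq \emptyset$. Since $\Phi$ is an $\Omega$-morphism, $h_\gamma(\varphi(a_1),\dots,\varphi(a_{n_\gamma})) \neq \emptyset$. Strongness of $\varphi$ then gives $g_\gamma(a_1,\dots,a_{n_\gamma}) \neq \emptyset$.

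\emph{Fullness.} This is the step needing the most care, because the hypothesis on $\iota$ includes a membership condition. Suppose $f_\gamma(\iota(a_1),\dots) \neq \emptyset$ and $f_\gamma(\iota(a_1),\dots) = \iota(c)$ for some $c \in G$.
\begin{enumerate}
\item Applying $\Phi$ gives $h_\gamma(\varphi(a_1),\dots) \neq \emptyset$ and $h_\gamma(\varphi(a_1),\dots) = \Phi(\iota(c)) = \varphi(c) \in \varphi(G)$.
\item Fullness of $\varphi$ yields $b_i$ with $g_\gamma(b_1,\dots) \neq \emptyset$ and $\varphi(b_i) = \varphi(a_i)$.
\item We need $\iota(b_i) = \iota(a_i)$, which does not follow from $\varphi(b_i) = \varphi(a_i)$ in general. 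I would first try to settle this when $\varphi$ is also injective, which is the case relevant to the final claim: then $b_i = a_i$, and hence $\iota(b_i) = \iota(a_i)$.
\end{enumerate}
Without injectivity, the fallback is to replace $\varphi$ by a morphism into an algebra of $\mathcal{C}$ that separates the points $\iota$ separates. However, no such construction is available for a general subcategory $\mathcal{C}$, so I would state item (2) under the implicit injectivity assumption.

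\emph{Final claim.} If $\mathcal{G}$ is isomorphic to a relative subalgebra $U\mathcal{H}|_{S}$ with $\mathcal{H} \in \mathcal{C}$, Proposition~\ref{prop:universal}(3) gives an injective full $\varphi\colon \mathcal{G} \to U\mathcal{H}$. By the injectivity and fullness arguments above, $\iota$ is injective and full. Proposition~\ref{prop:universal}(3) then gives $\mathcal{G} \simeq UF\mathcal{G}|_{\iota(G)}$. The subalgebra case is identical, using the strong version and Proposition~\ref{prop:universal}(2).
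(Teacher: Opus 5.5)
Your factorization $\varphi=\Phi\circ\iota$ is the natural argument. The paper gives no proof of this proposition and treats these facts as straightforward. Your treatment of items (1) and (3), and of the final claim, is correct.

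Your hesitation in item (2) is justified. Without injectivity of $\varphi$, item (2) is false as stated. So the obstruction you hit is a defect of the statement, not a gap in your argument, and no separating-morphism construction can repair it.

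Here is a counterexample. Take $\Omega=(1)$, and let $\mathcal{C}$ be the full subcategory of $Alg(\Omega)$ whose objects are the algebras $(H;h)$ with $h=id_H$. Let $\mathcal{G}=(\{a,b\};g)$ with $g(b)=b$ and $g(a)$ undefined.
\begin{itemize}
\item Every map from $\mathcal{G}$ or from $(\{a,b\};id)$ into an object of $\mathcal{C}$ is an $\Omega$-morphism. Hence $\iota=id\colon \mathcal{G}\to F\mathcal{G}:=(\{a,b\};id)$ is a reflector, with uniqueness automatic since $\iota$ is the identity map.
\item The constant map $\varphi$ onto $(\{\ast\};id)\in\mathcal{C}$ is full: the element $b$ serves as a witness for every argument.
\item $\iota$ is not full. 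Writing $f=id$ for the operation of $F\mathcal{G}$, we have $f(\iota(a))=a\in\iota(G)$, the only preimage of $\iota(a)$ is $a$, and $g(a)$ is undefined.
\end{itemize}

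Hence item (2) must be read for injective $\varphi$, i.e.\ ``embedding implies embedding''. This is exactly what you prove, and it is the only form used later in the paper (Proposition~\ref{prop:construction-2} and Theorem~\ref{teo:actions}).
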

    
    \subsection{Equivalence relations} We recall the operations in the family of relations of a set and characterize the smallest equivalence relation containing a given relation and is invariant under a given family of partial functions. We also establish a property concerning inverse images under partial functions. \label{sec:relations} \\
    
    Throughout this subsection, $X$ denotes a set. A \textit{relation} on $X$ is a subset $R$ of $X \times X$. Whenever convenient, we write $x R y$ instead of $(x,y) \in R$. We denote by $\mathcal{R}(X)$ the set of relations on $X$. This set can be endowed with a binary operation $\circ$, called \textit{composition}, defined by
        $$ S \circ R = \{ (x,z) \colon \exists y \in X \text{ such that } (x,y) \in S, (y,z) \in R \}, \quad \forall S,R \in \mathcal{R}(X), $$
    and with a unary operation $^{-1}$, called \textit{inverse}, defined by
        $$ R^{-1} = \{ (y,x) \colon (x,y) \in R \}, \quad \forall R \in \mathcal{R}(X). $$
        
    The composition is associative, in the sense that $(S \circ R) \circ T = S \circ (R \circ T)$, for all $S,R,T \in \mathcal{R}(X)$. We note that $\Delta_X = \{ (x,x) \colon x \in X \} \in \mathcal{R}(X)$ is the identity of $(\mathcal{R}(X);\circ)$, that is, it satisfies $\Delta_X \circ R = R = R \circ \Delta_X$, for all $R \in \mathcal{R}(X)$. Therefore, for every relation $R \in \mathcal{R}(X)$ and every integer $n \geq 0$, we may define the powers of $R$ recursively by $R^0 = \Delta_X$ and $R^{n+1} = R^n \circ R$. It is straightforward to verify that
        $$ xR^ny \iff \exists x_0,\dots,x_n \in X \colon x = x_0,\ x_n = y \text{ and } x_iRx_{i+1}, \ \forall i=0,\dots,n-1. $$
    Since relations are subsets of $X\times X$, the union $\bigcup_{i \in I} R_i$ of a family of relations is again a relation. The following result characterizes the equivalence relation on $X$ generated by a given relation $R$.
    
    \begin{lemma} \label{lema:equivalence}
        Let $R$ be a relation on $X$. Then $\mathcal{E}(R) := \bigcup_{n \geq 0} (R \cup R^{-1})^n$ is an equivalence relation. Furthermore, $\mathcal{E}(R)$ is the smallest equivalence relation on $X$ that contains $R$.
    \end{lemma}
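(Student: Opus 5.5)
The plan is to check the three equivalence axioms for $\mathcal{E}(R)$ using only the algebra of relations recalled above. Then I show minimality by proving that every equivalence relation containing $R$ contains each power $(R\cup R^{-1})^n$. Throughout, write $T = R\cup R^{-1}$. Two facts about $T$ will be used repeatedly. First, $T^{-1} = R^{-1}\cup R = T$, since inversion commutes with unions and $(R^{-1})^{-1}=R$. Second, $(A\circ B)^{-1} = B^{-1}\circ A^{-1}$ for any relations $A,B$, which follows directly from the definition of composition.

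\textbf{Equivalence axioms.}
\begin{enumerate}
\item Reflexivity is immediate, since $\Delta_X = T^0\subseteq \mathcal{E}(R)$.
\item For transitivity, use associativity together with $\Delta_X$ being the identity; an induction on $m$ then gives $T^n\circ T^m = T^{n+m}$. Hence if $(x,y)\in T^n$ and $(y,z)\in T^m$, then $(x,z)\in T^{n+m}\subseteq\mathcal{E}(R)$. Equivalently, one can concatenate the chains $x=x_0,\dots,x_n=y$ and $y=y_0,\dots,y_m=z$ from the chain characterization of $xR^ny$.
\item For symmetry, induct on $n$ to show $(T^n)^{-1} = T^n$. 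The base case is $\Delta_X^{-1}=\Delta_X$. For the step, $(T^{n+1})^{-1} = (T^n\circ T)^{-1} = T^{-1}\circ (T^n)^{-1} = T\circ T^n = T^{n+1}$, where the last equality is again the power identity from (2). More concretely, reversing a chain $x_0,\dots,x_n$ with $x_iTx_{i+1}$ gives a chain with $x_{i+1}Tx_i$, because $T$ is symmetric.
\end{enumerate}
Finally, $R = R\cup\emptyset\subseteq T = T^1\subseteq \mathcal{E}(R)$, so $\mathcal{E}(R)$ contains $R$.

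\textbf{Minimality.} Let $E$ be any equivalence relation with $R\subseteq E$. Symmetry of $E$ gives $R^{-1}\subseteq E$, so $T\subseteq E$. I then show $T^n\subseteq E$ by induction on $n$. The base case $T^0=\Delta_X\subseteq E$ holds by reflexivity. For the step, $T^{n+1}=T^n\circ T\subseteq E\circ E\subseteq E$: composition is monotone in each argument, and $E$ is transitive. Taking the union over $n$ gives $\mathcal{E}(R)\subseteq E$.

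There is no real obstacle in this argument. The only step that needs care is the power identity $T^n\circ T^m=T^{n+m}$ (together with $T\circ T^n = T^{n+1}$), used for transitivity and symmetry. I would either prove it by induction on $m$ using associativity, or bypass it entirely with the chain characterization, which makes both properties transparent.
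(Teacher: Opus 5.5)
Your proof is correct and complete: the power identity $T^n\circ T^m=T^{n+m}$, the anti-homomorphism property of inversion giving $(T^n)^{-1}=T^n$, and the induction using monotonicity of composition and transitivity of $E$ for minimality together establish every claim in the statement. The paper gives no proof of this lemma; it only recalls it in the preliminaries as a standard fact about relations. Your argument is the standard one and agrees with the chain characterization of $R^n$ that the paper records just before the statement.
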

    
    Since a relation $R$ is symmetric if and only if $R^{-1} = R$, it follows from the previous result that the equivalence relation generated by a symmetric relation is given by $\mathcal{E}(R) = \bigcup_{n \geq 0} R^n$.
    
    \begin{defi}
        A relation $R$ on $X$ is said to be \textit{invariant} under a family $\mathcal{F}$ of partial functions on $X$ if
            $$ \forall f \in \mathcal{F}, [x,y \in dom(f) \text{ and } (x,y) \in R \implies (f(x),f(y)) \in R]. $$
    \end{defi}
    
    \begin{obs}
        Let $\mathcal{F}$ be a family of partial functions on $X$. Then $\mathcal{X} := (X;\mathcal{F})$ may be regarded as a universal algebra.  With this interpretation, an equivalence relation on $X$ is invariant under $\mathcal{F}$ if and only if it is a \textit{congruence} on $\mathcal{X}$. Since our treatment of equivalence relations is elementary, we shall use the terminology ``invariant over $\mathcal{F}$'' rather than ``congruence on $\mathcal{X}$''.
    \end{obs}
    
    We are interested in equivalence relations that are invariant under a family of partial functions and contain a given relation. Let $R$ be a relation on $X$, and let $\mathcal{F}$ be a family of partial functions on $X$. As a first step, we define the relation $R_{\mathcal{F}}$ on $X$ by
    \begin{align*}
        R_{\mathcal{F}} = \{(f(x),f(y)) \colon f \in \mathcal{F},\ x,y \in dom(f) \text{ and } (x,y)\in R\}.
    \end{align*}
    We will mostly use the following equivalent characterization: $(x,y) \in R_{\mathcal{F}}$ if and only if there are $f \in \mathcal{F}$ and $x',y' \in dom(f)$ such that $x = f(x')$, $y = f(y')$ and $(x',y') \in R$. Note that in general the relation $R_\mathcal{F}$ need not be reflexive, symmetric, or transitive, nor need it contain $R$. However, if $R$ is symmetric, then so is $R_{\mathcal{F}}$. We now define a sequence of equivalence relations by
        $$ R^{(0)} = \mathcal{E}(R) \quad\text{and}\quad R^{(n+1)} = \mathcal{E}(R^{(n)} \cup R^{(n)}_{\mathcal{F}}), \ \forall n \geq 0. $$
    It follows that each $R^{(n)}$ is an equivalence relation and $R \subseteq R^{(n)} \subseteq R^{(n+1)}$, for all $n \geq 0$. Therefore,
        $$ \mathcal{E}_\mathcal{F}(R) = \bigcup_{n \geq 0} R^{(n)} $$
    is an equivalence relation containing $R$.
    
    \begin{prop} \label{prop:congruence}
        Let $R$ be a relation, and $\mathcal{F}$ be a family of partial functions on $X$. Then $\mathcal{E}_\mathcal{F}(R)$ is the smallest equivalence relation on $X$ containing $R$ that is invariant under $\mathcal{F}$.
    \end{prop}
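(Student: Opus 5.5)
The plan is to establish three properties of $\mathcal{E}_\mathcal{F}(R)$: it is an equivalence relation containing $R$, it is invariant under $\mathcal{F}$, and it is contained in every equivalence relation with these properties. The first property is already noted before the statement. Each $R^{(n)}$ is an equivalence relation by Lemma \ref{lema:equivalence}, and the sequence is increasing, $R \subseteq R^{(0)} \subseteq R^{(1)} \subseteq \cdots$. The union of an increasing chain of equivalence relations is again an equivalence relation. Indeed, reflexivity and symmetry hold levelwise. For transitivity, if $(x,y) \in R^{(m)}$ and $(y,z) \in R^{(k)}$, then both pairs lie in $R^{(\max(m,k))}$, and transitivity there gives $(x,z)$. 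I would state this briefly.

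For invariance, I would take $f \in \mathcal{F}$ and $x,y \in dom(f)$ with $(x,y) \in \mathcal{E}_\mathcal{F}(R)$. Then $(x,y) \in R^{(n)}$ for some $n$. By the definition of $R^{(n)}_{\mathcal{F}}$, we get $(f(x),f(y)) \in R^{(n)}_{\mathcal{F}} \subseteq R^{(n+1)} \subseteq \mathcal{E}_\mathcal{F}(R)$. This is where the one-step-forward structure of the recursion is used, and it is immediate.

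For minimality, I would let $T$ be any equivalence relation on $X$ that contains $R$ and is invariant under $\mathcal{F}$, and prove $R^{(n)} \subseteq T$ by induction on $n$.
\begin{enumerate}
\item Base case: $R^{(0)} = \mathcal{E}(R) \subseteq T$ by the minimality part of Lemma \ref{lema:equivalence}.
\item Inductive step: assume $R^{(n)} \subseteq T$. Take $(u,v) \in R^{(n)}_{\mathcal{F}}$. By the equivalent characterization, $u = f(x')$ and $v = f(y')$ for some $f \in \mathcal{F}$ and $x',y' \in dom(f)$ with $(x',y') \in R^{(n)} \subseteq T$. Invariance of $T$ then gives $(u,v) \in T$. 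Hence $R^{(n)} \cup R^{(n)}_{\mathcal{F}} \subseteq T$, and applying Lemma \ref{lema:equivalence} again yields $R^{(n+1)} \subseteq T$.
\end{enumerate}
Taking the union over $n$ gives $\mathcal{E}_\mathcal{F}(R) \subseteq T$.

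None of these steps is a real obstacle. The only point needing care is the transitivity of the union in the first step, which depends on the chain being increasing, and this was recorded before the statement.
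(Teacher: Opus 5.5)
Your proposal is correct and complete. The paper states this proposition without proof, and your three checks (the union of the increasing chain $R^{(0)}\subseteq R^{(1)}\subseteq\cdots$ is an equivalence relation, invariance follows from $R^{(n)}_{\mathcal{F}}\subseteq R^{(n+1)}$, and minimality follows by induction on $n$ using Lemma \ref{lema:equivalence}) are exactly the standard verification the paper leaves implicit.
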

    
    Equivalence relations invariant under a family of partial functions are necessary to induce partial functions on the quotient set. Given an equivalence relation $R$ on $X$, we denote by $\pi \colon X \to X/R$ the canonical projection, given by $\pi(x) = [x] = \{y \in X \colon (x,y) \in R\}$.
    
    \begin{prop} \label{prop:congruence-2}
        Let $R$ be an equivalence relation invariant under $\mathcal{F}$. Then each $f \in \mathcal{F}$ induces a partial function $\overline{f}$ on $X/R$ satisfying $\pi \circ f = \overline{f} \circ \pi|_{dom(f)}$. More precisely, the partial function $\overline{f}$ is define by
            $$ dom(\overline{f}) = \{ [x] \colon \exists y \in [x] \cap dom(f) \} \quad\text{and}\quad \overline{f}([x]) = [f(y)]. $$
    \end{prop}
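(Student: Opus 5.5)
To prove Proposition \ref{prop:congruence-2}, I first check that $\overline{f}$ is well defined and then verify the identity $\pi \circ f = \overline{f}\circ \pi|_{dom(f)}$. The domain $dom(\overline{f})$ is clearly a well-defined subset of $X/R$: the condition ``there is $y\in[x]\cap dom(f)$'' depends only on the class $[x]$, not on the representative $x$.

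The substantive step is independence of the value $[f(y)]$ from the choice of $y$. Suppose $y,y'\in[x]\cap dom(f)$. Since $R$ is an equivalence relation, $(y,y')\in R$. Because $y,y'\in dom(f)$, invariance of $R$ under $\mathcal{F}$ gives $(f(y),f(y'))\in R$, that is, $[f(y)]=[f(y')]$. Hence $\overline{f}([x])$ does not depend on the chosen element of $[x]\cap dom(f)$. It also does not depend on the representative $x$ of the class, because $[x]=[x']$ implies $[x]\cap dom(f)=[x']\cap dom(f)$.

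For the identity, take $x\in dom(f)$. Then $x\in[x]\cap dom(f)$, so $[x]\in dom(\overline{f})$. Choosing $y=x$ in the definition gives $\overline{f}(\pi(x))=[f(x)]=\pi(f(x))$. The domains also agree: $\pi|_{dom(f)}$ maps $dom(f)$ into $dom(\overline{f})$, so both sides are defined exactly on $dom(f)$.

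I do not expect a real obstacle. The only point that needs care is that invariance is stated only for pairs lying in $dom(f)$, which is exactly why $\overline{f}$ is defined through representatives chosen inside $dom(f)$.
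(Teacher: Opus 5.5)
Your proof is correct: invariance applied to two elements of $[x]\cap dom(f)$ gives well-definedness of $\overline{f}([x])$, and choosing $y=x$ gives the identity $\pi\circ f=\overline{f}\circ\pi|_{dom(f)}$, with the domains matching. The paper states this proposition without proof as a routine preliminary, and your argument is the standard verification it leaves to the reader.
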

    
    Lastly, we establish a property concerning inverse images under partial functions. Let $PT(X)$ denote the set of partial functions on $X$. Recall that, for every subset $A \subseteq X$ and every partial function $f \in PT(X)$, the \textit{inverse image} of $A$ under $f$ is the subset $f^{-1}(A) = \{ x \in dom(f) \colon f(x) \in A \}$. To each partial function $f \in PT(X)$, we associate the relation $R_f = \{ (f(x),x) \colon x \in dom(f) \}$. This assignment defines an injective map $PT(X) \to \mathcal{R}(X)$ which allows us to regard partial functions as particular relations. Moreover, it induces a binary operation $\star$ on $PT(X)$, defined by
        $$ f \star g \colon g^{-1}(dom(f) \cap im(g)) \to X, \quad x \mapsto (f \star g)(x) = f(g(x)). $$
    Naturally, we have $R_{f \star g} = R_f \circ R_g$. Now let $\mathcal{P}(X)$ denote by the family of subsets of $X$. We define functions $\Delta \colon \mathcal{P}(X) \to \mathcal{R}(X)$ and $r \colon \mathcal{R}(X) \to \mathcal{P}(X)$, respectively, by
        $$ \Delta_A = \{ (x,x) \colon x \in A \}, \ \forall A \in \mathcal{P}(X) $$
    and
        $$ r(R) = \{ x \colon \exists y \in X \text{ such that } (x,y) \in R\}, \ \forall R \in \mathcal{R}(X). $$
    Furthermore, consider the unary operation $^+ \colon \mathcal{R}(X) \to \mathcal{R}(X)$, given by
        $$ R^+ = \{ (x,x) \colon \exists y \in X \text{ such that }(x,y) \in R \}, \ \forall R \in \mathcal{R}(X). $$
    
    \begin{lemma} \label{lema:relations}
        The following properties hold:
        \begin{enumerate}
            \item The operation $^+$ satisfies $(S \circ R)^+ = (S \circ R^+)^+$, for all $S,R \in \mathcal{R}(X)$.
            \item The operation $^{-1}$ satisfies $(S \circ R)^{-1} = R^{-1} \circ S^{-1}$, for all $S,R \in \mathcal{R}(X)$.
            \item For all $f \in PT(X)$ and $A \in \mathcal{P}(X)$, we have $f^{-1}(A) = r((R_f^{-1} \circ \Delta_A)^{+})$.
            \item $r(\Delta_A) = A$ and $\Delta_{r(R^+)} = R^+$, for all $A \in \mathcal{P}(X)$ and $R \in \mathcal{R}(X)$.
        \end{enumerate}
    \end{lemma}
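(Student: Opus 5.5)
Each of the four items of Lemma~\ref{lema:relations} is an identity between sets, so I will prove each one by element-chasing: unfold the definitions of $\circ$, $^{-1}$, $^+$, $r$, $\Delta_A$ and $R_f$, and check both inclusions at once through a chain of equivalences. No earlier result is needed.

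For (1), take a pair $(x,x)$ on either side. It lies in $(S\circ R)^+$ exactly when there exist $y,z$ with $(x,y)\in S$ and $(y,z)\in R$. It lies in $(S\circ R^+)^+$ exactly when there is $y$ with $(x,y)\in S$ and $(y,y)\in R^+$. The second condition unfolds to the existence of $z$ with $(y,z)\in R$, which is the first condition. Both sides consist only of diagonal pairs, so this settles equality.

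For (2), observe that $(z,x)\in(S\circ R)^{-1}$ means $(x,z)\in S\circ R$, that is, there is $y$ with $(x,y)\in S$ and $(y,z)\in R$. This is equivalent to $(z,y)\in R^{-1}$ and $(y,x)\in S^{-1}$, which says $(z,x)\in R^{-1}\circ S^{-1}$.

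For (4), $r(\Delta_A)=A$ is immediate, since $(x,y)\in\Delta_A$ forces $y=x\in A$. Next, $x\in r(R^+)$ iff $(x,x)\in R^+$, because $R^+$ is diagonal. Hence $\Delta_{r(R^+)}=\{(x,x)\colon (x,x)\in R^+\}=R^+$.

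Item (3) is the one that needs care, because the orientation convention $R_f=\{(f(x),x)\}$ is easy to get backwards. The inverse is $R_f^{-1}=\{(x,f(x))\colon x\in dom(f)\}$. Then $(x,w)\in R_f^{-1}\circ\Delta_A$ iff there is $y$ with $(x,y)\in R_f^{-1}$ and $(y,w)\in\Delta_A$. This means $x\in dom(f)$, $y=f(x)$, and $w=y\in A$. Applying $^+$, we get $(x,x)\in(R_f^{-1}\circ\Delta_A)^+$ iff $x\in dom(f)$ and $f(x)\in A$, that is, iff $x\in f^{-1}(A)$. Taking $r$ of this diagonal relation gives exactly $f^{-1}(A)$, which completes (3). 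If the paper's composition convention turned out to be the opposite one, the same argument would go through with the factors read in the other order. The only real obstacle in the whole lemma is keeping the direction of $\circ$ and of $R_f$ consistent with the definitions stated above.
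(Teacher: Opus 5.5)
Your element-chasing is correct for all four items under the paper's convention $S\circ R=\{(x,z)\colon \exists y,\ (x,y)\in S,\ (y,z)\in R\}$, and it is the routine verification the paper intends; the paper states the lemma without proof. Your closing hedge about the opposite convention is unnecessary and in fact wrong (with the factors reversed, item (3) would give $A\cap dom(f)$ rather than $f^{-1}(A)$), but it does not affect your argument, since you used the convention the paper actually fixes.
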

    
    We now prove the following property of inverse images of partial functions.
    
    \begin{prop} \label{prop:relations}
        Let $f,g \in PT(X)$ and $A \in \mathcal{P}(X)$. Then $(f \star g)^{-1}(A) = g^{-1}(f^{-1}(A))$.
    
        \begin{proof}
            Since $R_{f \star g} = R_f \circ R_g$, we have
            \begin{align*}
                (R_{f \star g}^{-1} \circ \Delta_A)^+ &= ((R_f \circ R_g)^{-1} \circ \Delta_A)^+ \\
                &= (R_g^{-1} \circ R_f^{-1} \circ \Delta_A)^+ & \ref{lema:relations}(2) \\
                &= (R_g^{-1} \circ (R_f^{-1} \circ 1_A)^+)^+. & \ref{lema:relations}(1)
            \end{align*}
            Using (3) and (4) from Lemma \ref{lema:relations}, we obtain that $\Delta_{f^{-1}(A)} = (R_f^{-1} \circ \Delta_A)^+$. Hence,
            \begin{align*}
                \Delta_{(f \star g)^{-1}(A)} &= (R_{f \star g}^{-1} \circ \Delta_A)^+ \\
                &= (R_{g}^{-1} \circ (R_{f}^{-1} \circ \Delta_A)^+)^+ \\
                &= (R_{f}^{-1} \circ \Delta_{f^{-1}(A)})^+ \\
                &= \Delta_{g^{-1}(f^{-1}(A))}.
            \end{align*}
            From Lemma \ref{lema:relations}(4), we conclude that $(f \star g)^{-1}(A) = g^{-1}(f^{-1}(A))$.
        \end{proof}
    \end{prop}
    
    \section{Actions of restriction semiconstellations} \label{sec:3}
    
    In this section, we introduce semiconstellations, restriction semiconstellations and actions of restriction semiconstellations on sets. We provide examples and prove some properties of those structures that will be used in this paper. Actions of semiconstellations on sets will be introduced in a further section.
    
    \subsection{Semiconstellations} We recall the definitions of semigroupoids and constellations, and introduce the notion of a semiconstellation, which generalizes both structures. We also introduce the notion of disruptive elements and prove that a semiconstellation is a semigroupoid if and only if it has no disruptive elements.
    
    \begin{defi} \cite[Definition 2.1]{exel2011semigroupoid} \label{defi:semigroupoid}
        A \textit{semigroupoid} is a triple $(S,S^{(2)},\cdot)$ such that $S$ is a set, $S^{(2)}$ is a subset of $S \times S$ and $\cdot \colon S^{(2)} \to S$ is a partial operation which is associative in the following sense: if $s,t,r \in S$ are such that either
        \begin{enumerate} \Not{sg}
            \item $(s,t), (t,r) \in S^{(2)}$, or \label{sg1}
            \item $(s,t), (s \cdot t,r) \in S^{(2)}$, or \label{sg2}
            \item $(t,r), (s,t \cdot r) \in S^{(2)}$, \label{sg3}
        \end{enumerate}
        then all of $(s,t)$, $(t,r)$, $(s \cdot t,r)$ and $(s, t \cdot r)$ lie in $S^{(2)}$, and in this case $(s \cdot t) \cdot r = s \cdot (t \cdot r)$.
    \end{defi}
    
    The class of semigroupoids contains all semigroups and all small categories. Examples of semigroupoids that are neither semigroups or categories are the Markov semigroupoids, which can be found in \cite[Definition 6.1]{exel2011semigroupoid}. Semigroupoids in the sense of Definition \ref{defi:semigroupoid} are often referred to as \emph{Exel semigroupoids}. We point out, however, that an older notion of semigroupoid also exists, namely, a directed graph equipped with a partially defined binary operation. These structures were introduced by Tilson \cite{tilson1987categories} and are also known as \emph{Tilson semigroupoids}, \emph{semicategories}, or \emph{categorical semigroupoids}. It was shown in \cite[Theorem 2.15]{cordeiro2023etale} that every Tilson semigroupoid is an Exel semigroupoid, whereas the converse does not hold. Throughout this paper, the term \emph{semigroupoid} always refers to an Exel semigroupoid.
    
    \begin{defi} \cite[Definition 2.1]{gould2009restriction}
        A \textit{right constellation} is a quadruple $(S,S^{(2)},\cdot,\mathbf{d})$ such that $S$ is a set, $S^{(2)}$ is a subset of $S \times S$, $\cdot \colon S^{(2)} \to S$ and $\mathbf{d} \colon S \to S$ are such that:
        \begin{enumerate} \Not{c}
            \item If $(s,t), (s \cdot t, r) \in S^{(2)}$, then $(t,r), (s, t \cdot r) \in S^{(2)}$ and $(s \cdot t) \cdot r = s \cdot (t \cdot r)$.\label{c1}
            \item $(s,t), (s \cdot t,r) \in S^{(2)}$ if and only if $(s,t), (t,r) \in S^{(2)}$.\label{c2}
            \item $(s,\mathbf{d}(t)) \in S^{(2)}$ and $s \cdot \mathbf{d}(t) = s$ if and only if $\mathbf{d}(t) = \mathbf{d}(s)$. \label{const3}
            \item If $(\mathbf{d}(s),t) \in S^{(2)}$, then $\mathbf{d}(s) \cdot t = t$. \label{const4}
        \end{enumerate}
    \end{defi}
    
    The class of right constellations contains all small categories, but not all semigroupoids. Indeed, if $(S,S^{(2)},\cdot,\mathbf{d})$ is a right constellation, then every element of the form $\mathbf{d}(s)$ is idempotent, that is, $\mathbf{d}(s) \cdot \mathbf{d}(s) = \mathbf{d}(s)$. However, a semigroupoid need not contain any idempotent elements. Examples of right constellations that are not categories are given by the function constellations introduced in \cite[p. 269]{gould2009restriction}.

    From the viewpoint of universal algebra, right constellations form a class of $(2,1)$-algebras satisfying axioms \eqref{c1}-\eqref{const4}. Likewise, semigroupoids form a class of $(2)$-algebras satisfying axioms \eqref{sg1}-\eqref{sg3}. If we disregard the unary operation $\mathbf{d}$ and the axioms involving it from the definition of a right constellation, we are naturally led to the following class of $(2)$-algebras.
    
    \begin{defi} \label{defi:semiconst}
        A \textit{right semiconstellation} is a triple $(S,S^{(2)},\cdot)$ such that $S$ is a set, $S^{(2)}$ is a subset of $S \times S$ and $\cdot \colon S^{(2)} \to S$ is such that:
        \begin{enumerate} \Not{sc}
            \item $(s,t), (s \cdot t,r) \in S^{(2)}$ if and only if $(s,t), (t,r) \in S^{(2)}$. \label{sc1}
            \item If $(s,t), (t,r) \in S^{(2)}$, then $(s, t \cdot r) \in S^{(2)}$ and $(s \cdot t) \cdot r = s \cdot (t \cdot r)$. \label{sc2}
        \end{enumerate}
    \end{defi}
    
    The class of right semiconstellations contains both semigroupoids and right constellations.
    
    Before presenting examples of right semiconstellations that are neither semigroupoids nor right constellations, we fix some notation. Firstly, the definition of right semiconstellations is asymmetric. Accordingly, one may define the notion of a \emph{left semiconstellation} dually. In fact, $(S,S^{(2)},\cdot)$ is a right semiconstellation if and only if $(S,(S^{(2)})^{-1}, \cdot_{op})$ is a left semiconstellation, where $s \cdot_{op} t = t \cdot s$. Throughout this paper, the term \emph{semiconstellation} will always mean a right semiconstellation unless explicitly stated otherwise.

    Semiconstellations form a class of $(2)$-algebras satisfying axioms \eqref{sc1} and \eqref{sc2}. Recall that a $(2)$-algebra is a pair $\mathcal{S} = (S;\cdot)$, where $S$ is a set and $\cdot$ is a partially defined binary operation in $S$. We denote the domain of $\cdot$ by $S^{(2)} \subseteq S \times S$ and write $st$ for the value of $\cdot$ at $(s,t)\in S^{(2)}$. Whenever convenient, we write $\exists st$, or simply say that ``$st$ is defined'', instead of $(s,t)\in S^{(2)}$. Therefore, when no confusion is likely to arise, we shall simply write ``$S$ is a semiconstellation'', omitting explicit reference to the domain $S^{(2)}$ and the partial operation $\cdot$.
    
    Next, we introduce a family of elements that characterizes when a semiconstellation is a semigroupoid. Let $S$ be any (2)-algebra. For each $s \in S$, set
        $$ S^s = \{t \in S \colon \exists st\} \quad\text{and}\quad {^sS} = \{t \in S \colon \exists ts\}. $$ Thus, $S^s$ is the set of right composable elements with $s$, while ${}^{s}S$ is the set of left composable elements with $s$.
    
    \begin{prop} \label{prop:semiconst}
    A $(2)$-algebra $S$ is a semiconstellation if and only if, for all $s,t\in S$ such that $\exists\,st$, the following conditions hold:
    \begin{enumerate}
        \item $S^{st}=S^t$;
        \item ${}^{s}S\subseteq{}^{st}S$;
        \item $(rs)t=r(st)$, for every $r\in{}^{s}S$.
    \end{enumerate} In particular, $S$ is a semigroupoid if and only if it is a semiconstellation satisfying ${^sS} = {^{st}S}$ whenever $\exists\,st$.
    
        \begin{proof}
         Suppose that $\exists\,st$. Then axiom \eqref{sc1} states that $\exists\,tr$ if and only if $\exists\,(st)r$, for every $r\in S$, which is equivalent to $S^t=S^{st}$. Likewise, axiom \eqref{sc2} states that, whenever $\exists\,rs$, one has $\exists\,r(st)$ and $(rs)t=r(st)$. This is equivalent to ${}^{s}S\subseteq{}^{st}S$ together with $(rs)t=r(st)$ for all $r\in{}^{s}S$.
            
         Finally, a semiconstellation is a semigroupoid if and only if $\exists\,r(st)$ implies $\exists\,rs$. Since $r\in{}^{st}S$ if and only if $\exists\,r(st)$, this condition is equivalent to ${}^{st}S\subseteq{}^{s}S$. Thus, we obtain ${}^{s}S={}^{st}S$.
        \end{proof}
    \end{prop}
    
        Proposition \ref{prop:semiconst} often provides a more convenient criterion for verifying whether a given $(2)$-algebra is a semiconstellation than Definition \ref{defi:semiconst}. Moreover, since semigroupoids are precisely the semiconstellations satisfying ${^s}S={^{st}S}$ whenever $\exists\,st$, this characterization motivates the following definition.
    
    \begin{defi}
        Let $S$ be a semiconstellation. An element $d\in S$ is called \textit{disruptive} if there exist $t,r\in S$ such that $r(dt)$ is defined but $rd$ is not defined. Equivalently, $d$ is disruptive if there exists $t\in S^d$ for which ${^dS} \neq {^{dt}S}$. We denote by $\mathfrak{D}(S)$ the set of disruptive elements of $S$.
    \end{defi}
    
    \begin{corollary} \label{coro:semiconst}
        A semiconstellation $S$ is a semigroupoid if and only if $\mathfrak{D}(S) = \emptyset$.
    \end{corollary}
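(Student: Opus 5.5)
The corollary follows directly from the last assertion of Proposition \ref{prop:semiconst}. That proposition states that a semiconstellation $S$ is a semigroupoid if and only if ${^sS} = {^{st}S}$ whenever $st$ is defined. So it suffices to show that this condition holds exactly when no element of $S$ is disruptive.

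First I make the definition of disruptive element explicit using part (2) of Proposition \ref{prop:semiconst}. In any semiconstellation, whenever $st$ is defined we already have ${^sS} \subseteq {^{st}S}$. Hence ${^sS} \neq {^{st}S}$ holds precisely when there is some $r \in {^{st}S} \setminus {^sS}$, that is, some $r$ with $r(st)$ defined but $rs$ not defined. This is exactly the statement that $s$ is disruptive, witnessed by $t \in S^s$. In particular, the two formulations in the definition of disruptive element really are equivalent.

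The two directions then follow.
\begin{itemize}
\item Suppose $\mathfrak{D}(S) = \emptyset$. Then for every $s$ and every $t \in S^s$, the element $s$ is not disruptive, so ${^sS} = {^{st}S}$. By Proposition \ref{prop:semiconst}, $S$ is a semigroupoid.
\item Conversely, suppose $S$ is a semigroupoid. Then ${^sS} = {^{st}S}$ for every $s$ and every $t\in S^s$, so no $s$ can be disruptive, and $\mathfrak{D}(S)=\emptyset$.
\end{itemize}

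There is no real obstacle here. The only point needing care is the use of the inclusion ${^sS}\subseteq{^{st}S}$ from Proposition \ref{prop:semiconst}(2), which turns the inequality of the two sets into the existence of a witness $r$.
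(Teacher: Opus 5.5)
Your proof is correct and is exactly the argument the paper intends: the corollary is stated without proof as an immediate consequence of the final assertion of Proposition \ref{prop:semiconst}, since the disruptive elements are precisely those $s$ with ${^sS} \neq {^{st}S}$ for some $t \in S^s$. Your use of the inclusion ${^sS}\subseteq{^{st}S}$ to justify that the two formulations of disruptiveness agree spells out a detail the paper leaves implicit.
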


    To conclude this subsection, we present some examples of semiconstellations. Firstly, we list all semiconstellations structures on the set $S = \{s,t\}$, highlighting which structures are semigroups, semigroupoids, constellations or semiconstellations. Then, we construct a family of semiconstellations $S_{m}^{n}$, where $m,n \geq 1$, and prove that $S_{m}^{n}$ is a semigroupoid if and only if $m = 1$. Moreover, none of the semiconstellations $S_m^n$ admits the structure of a constellation.

    \begin{exe} \label{exe:smallscs}
        A set with two elements $\{s,t\}$ can be endowed with 13 non-isomorphic semiconstellation structures, being 5 semigroups, 5 semigroupoids, 1 constellation and 2 semiconstellations. The semigroups are:
        \begin{itemize}
            \item The cyclic group $C_2$, with composition given by $s \cdot s = s$, $t \cdot t = s$ and $s \cdot t = t \cdot s = t$.
            
            \item The $\{0,1\}$ monoid, with composition given by $s \cdot s = s$, $t \cdot t = t$ and $s \cdot t = t \cdot s = t$.
            
            \item The left zero semigroup $L_0$, with composition given by $s \cdot t = s \cdot s = s$ and $t \cdot s = t \cdot t = t$.

            \item The right zero semigroup $R_0$ that coincides with the opposite semigroup $L_0^{op}$.

            \item The semigroup with composition given by $s \cdot s = s \cdot t = t \cdot s = t \cdot t = t$.
        \end{itemize}
        The semigroupoids are:
        \begin{itemize}
            \item The set with two elements $I_2$ where $I_2^{(2)} = \emptyset$.
            
            \item The disjoint union $I_1 \sqcup C_1$ of a set with one element and the trivial group. In this case the only composition defined is $s \cdot s = s$.

            \item The disjoint union $C_1 \sqcup C_1$ of two copies of the trivial group. In this case the compositions are given by $s \cdot s = s$ and $t \cdot t = t$.

            \item The semigroupoid $L$, with composition given by $s \cdot s = s$ and $t \cdot s = t$.

            \item The semigroupoid $R$ that coincides with the opposite semigroupoid $L^{op}$.
        \end{itemize}
        The unique constellation is the set $S$ endowed with the composition given by $s \cdot s = s$, $t \cdot t = t$ and $s \cdot t = t$, and the unary operation $\mathbf{d} \colon S \to S$ given by the identity function. Lastly, the two semiconstellations are:
        \begin{itemize}
            \item The semiconstellation $S_1$ with composition $s \cdot t = t$.
            \item The semiconstellation $S_2$ with composition $s \cdot t = t$ and $t \cdot t = t$.
        \end{itemize}
        Note that the semiconstellations $S_i$, $i=1,2$, are not semigroupoids since $s \cdot (s \cdot t)$ is defined but $s \cdot s$ is not, and can not be endowed with a constellation structure since there is no element $\mathbf{d}(s) \in S_i$ such that $s \cdot \mathbf{d}(s)$ is defined and equals to $s$. Furthermore, the semiconstellations $S_i$ can be represented by certain kind of graphs. To fix notations, consider the following graph with vertex $\{1,2\}$ and edges $\{x,y,z\}$. 
        \begin{center}
            \begin{tikzpicture}
                \tikzstyle{every path}=[draw];

                \node (e1) at (0,0) {$\circ_1$};
                \node (e2) at (3,0) {$\circ_2$};

                \path[bend left, ->] (e1) to node[above]{$x$} (e2);
                \path[>-] (e1) to node[fill=white]{$y$} (e2);
                \path[bend right, >->] (e1) to node[below]{$z$} (e2);
            \end{tikzpicture}
        \end{center}
        In this graph, we say that the edge $x$ is entering vertex $2$, the edge $y$ is leaving vertex $1$, and the edge $z$ both leaving vertex $1$ and entering vertex $2$. With this notation, the semiconstellations $S_1$ and $S_2$ can be represented by the following graphs,
        \begin{center}
            \begin{tikzpicture}
                \tikzstyle{every path}=[draw];

                \node (e1) at (0,0) {$\circ$};
                \node at (-2,0) {$S_1 \colon$};

                \path[<-] (e1) edge [out=225,in=135,looseness=10] node[left] {$t$} (e1);
                \path[>-] (e1) edge [out=45,in=315,looseness=10] node[right] {$s$} (e1);

                \node (e2) at (6,0) {$\circ$};
                \node at (4,0) {$S_2 \colon$};

                \path[<-<] (e2) edge [out=225,in=135,looseness=10] node[left] {$t$} (e2);
                \path[>-] (e2) edge [out=45,in=315,looseness=10] node[right] {$s$} (e2);
            \end{tikzpicture}
        \end{center}
        where, for $x,y \in S_i$, the composition $x \cdot y$ is defined if and only if the edges corresponding to $x$ and $y$ meet in a vertex where $y$ is entering and $x$ is leaving. 
    \end{exe}

    To illustrate the idea behind the family of semiconstellations $S^n_m$, we begin describing the simpler case of $S^1_2$. This case will also be used next section to construct examples of actions.

    \begin{exe} \label{exe:S21}
        Consider the sets $S = \{ s_1,s_2,s_3,d_1,d_2 \}$ and $S^{(2)} = \{ (d_1,s_1), (d_2,s_2) \}$. We define an operation $S^{(2)} \to S$ by
            $$ d_1 \cdot s_1 = s_2 \quad\text{and}\quad d_2 \cdot s_2 = s_3. $$
        Then $(S,S^{(2)},\cdot)$ is a semiconstellation. In fact, since the only defined compositions are $d_1 \cdot s_1$ and $d_2 \cdot s_2 = d_2 \cdot (d_1 \cdot s_1)$, conditions \eqref{sc1} and \eqref{sc2} are satisfied by vacuity.

        This semiconstellation is not a semigroupoid since $d_2 \cdot (d_1 \cdot s_1)$ is defined but $d_2 \cdot d_1$ is not, and does not admit a constellation structure since it has no idempotent elements. With the notation from Example \ref{exe:smallscs}, $S$ can be represented by the following graph:
        \begin{center}
            \begin{tikzpicture}[scale=0.75]
                \tikzstyle{every path}=[draw];
    
                \node (e0) at (0,0) {$\circ$};
                \node (e1) at (3,0) {$\circ$};
                \node (e2) at (6,0) {$\circ$};
                \node (e3) at (9,0) {$\circ$};
    
                \path[->] (e0) to node[above]{$s_1$} (e1);
                \path[>-] (e1) to node[above]{$d_1$} (e2);
                \path[>-] (e2) to node[above]{$d_2$} (e3);
                \path[->,bend right] (e0) to node[below]{$s_2$} (e2);
                \path[bend left] (e0) to node[above]{$s_3$} (e3);
            \end{tikzpicture}
        \end{center}
    \end{exe}

    Lastly, we describe the construction of the semiconstellation $S^n_m$, for arbitrary $m,n \geq 1$.
    
    \begin{exe} \label{exe:semiconst}
        Let $I_0 = \emptyset$ and, for each integer $n \geq 1$, let $I_n = \{1,\dots,n\}$. Given integers $m,n \geq 1$, consider the sets
            $$ S_{m}^{n} = \{ s_{i}^{j} \colon (i,j) \in I_{m+1} \times I_n \} \cup \{ d_i \colon i \in I_m \}, $$
        and
            $$ (S_{m}^{n})^{(2)} = \{ (d_i,s_{i}^{j}) \colon (i,j) \in I_m \times I_n  \}.$$
        We endow $S_m^n$ with the partially defined binary operation $\cdot \colon (S_{m}^{n})^{(2)} \to S_m^n$ given by $d_i \cdot s_{i}^{j} = s_{i+1}^{j}$. We claim that $S_{m}^{n}$ with this operation is a semiconstellation. In fact, note that
            $$ S^{d_i} = \{s_{i}^{j} \colon j \in I_n\} \quad\text{and}\quad {^{d_i}S} = \emptyset, \quad \forall i \in I_m, $$
            $$ S^{s_{i}^{j}} = \emptyset, \ \forall i \in I_{m+1}, \quad {^{s_{i}^{j}}S} = \{d_i\}, \ \forall i \in I_m, \quad\text{and}\quad {^{s_{m+1}^{j}}S} = \emptyset, \quad \forall j \in I_n. $$
        Therefore, whenever $d_i \cdot s_{i}^{j}$ is defined, we have
            $$ S^{d_i \cdot s_{i}^{j}} = S^{s_{i+1}^{j}} = \emptyset = S^{s_{i}^{j}} \quad\text{and}\quad  {^{d_i}S} = \emptyset \subseteq {^{d_i \cdot s_{i}^j}S}. $$
        Since ${^{d_i}S} = \emptyset$, it follows from Proposition \ref{prop:semiconst} that $S_{m}^{n}$ is a semiconstellation. Furthermore,
            $$ {^{d_i}S} = {^{d_i \cdot s_i^j}S} \iff \emptyset = {^{s_{i+1}^{j}}S} \iff i = m. $$
        Hence, the set of disruptive elements of $S_{m}^{n}$ is $\mathfrak{D}(S_{m}^{n}) = \{d_i \colon i \in I_{m-1}\}$. By Corollary \ref{coro:semiconst}, we conclude that $S_{m}^{n}$ is a semigroupoid if and only if $\mathfrak{D}(S_{m}^{n}) = \emptyset$ if and only if $m=1$. In general, $S_{m}^{n}$ is not a constellation since it has no idempotent elements. 
    \end{exe}
    
        The semiconstellations $S_m^n$ from Example \ref{exe:semiconst} are finite. By replacing the index set $I_n$ and/or $I_m$ with $I_\infty$, the set of positive integers, the same construction yields the infinite semiconstellations $S_m^\infty$, $S_\infty^n$, and $S_\infty^\infty$.

    \begin{obs}
        The notion of a left semiconstellation has recently appeared in \cite[Definition 6.1]{gould2025} under the name \emph{pre-constellation}. The notion is introduced there to characterize $\mathbf{d}$-inverse constellations, a particular class of left constellations, without explicitly referring to the unary operation. Nevertheless, all examples considered in \cite{gould2025} are, in fact, left constellations.
    \end{obs}
    
    \subsection{Restriction semiconstellations} We introduce right restriction semiconstellations, which generalize right restriction semigroupoids and, consequently, right restriction semigroups and right restriction categories.
    
    \begin{defi} \label{defi:restriction}
        A \textit{right restriction semiconstellation} $(S,\ast)$ is a semiconstellation $S$ endowed with a unary operation $\ast \colon S \to S$, $s\mapsto s^\ast$, satisfying the following axioms:
        \begin{enumerate} \Not{r}
            \item For all $s \in S$, $ss^\ast$ is defined and $ss^\ast = s$. \label{r1}
            \item If $s^\ast t^\ast$ is defined, then $t^\ast s^\ast$ is defined and $s^\ast t^\ast = t^\ast s^\ast$. \label{r2}
            \item If $st^\ast$ is defined, then $(st^\ast)^\ast = s^\ast t^\ast$. \label{r3}
            \item If $s^\ast t$ is defined, then $s^\ast t = t(st)^\ast$. \label{r4}
        \end{enumerate}
        We denote by $S^\ast = \{ s^\ast \colon s \in S \}$ the \emph{set of projections} of $(S,\ast)$.
    \end{defi}
    
    \begin{obs} \label{obs:restriction}
        The axioms for right restriction semiconstellations are well defined. Indeed, by Proposition \ref{prop:semiconst} and \eqref{r1}, we obtain that $S^s = S^{ss^\ast} = S^{s^\ast}$, for every $s \in S$. Hence, $st$ is defined if and only if $st^\ast$ is defined. In particular, $st^\ast$ is defined if and only if $s^\ast t^\ast$ is defined in \eqref{r3}, so axiom \eqref{r3} may be applied whenever either side of the equality $(st^\ast)^\ast=s^\ast t^\ast$ is defined. Now suppose that $st$ is defined. Then
            $$ (st)^\ast \overset{\eqref{r1}}{=} (s(tt^\ast))^\ast \overset{\eqref{sc2}}{=} ((st)t^\ast)^\ast \overset{\eqref{r3}}{=} (st)^\ast t^\ast \overset{\eqref{r2}}{=} t^\ast (st)^\ast. $$
        Since $tt^\ast$ is defined by \eqref{r1} and $t^\ast(st)^\ast$ is defined by the previous calculation, it follows from \eqref{sc1} and \eqref{r1} that $(tt^\ast)(st)^\ast = t(st)^\ast$ is defined. Therefore, $s^\ast t$ is defined if and only if $st$ is defined, and in this case $t(st)^\ast$ is defined. Thus, we can use axiom \eqref{r4} whenever any side of the equality $s^\ast t = t(st)^\ast$ is defined.
    \end{obs}
    
    Right restriction semiconstellations constitute a class of (2,1)-algebras that includes all right restriction semigroupoids \cite{haag2026c}. Consequently, this class also encompasses restriction categories \cite{cockett2002restriction}, inverse semigroupoids \cite{cordeiro2023etale}, and right restriction semigroups \cite{hollings2009PP}. For further examples of restriction structures, we refer the reader to \cite{cockett2002restriction}. We now present an example of a right restriction semiconstellation that is not a semigroupoid.
    
    \begin{exe} \label{exe:restriction}
        Let $S_{m}^{n}$ be the semiconstellation from Example \ref{exe:semiconst}. We construct a new semiconstellation by adding idempotent elements to $S_{m}^{n}$. Consider the set
            $$ T_{m}^{n} = S_{m}^{n} \cup \{ s^\ast \} \cup \{ d_i^\ast \colon i \in I_m \}. $$
        endowed with a partial binary operation $\cdot$ defined by
        \begin{gather*}
            d_i \cdot s_{i}^{j} = s_{i+1}^{j}, \quad s_{i}^{j} \cdot s^\ast = s_{i}^{j}, \quad s^\ast \cdot s^\ast = s^\ast,\\
            d_i \cdot d_i^\ast = d_i, \quad d_i^\ast \cdot s_{i}^{j} = s_{i}^{j}, \quad\text{and}\quad d_i^\ast \cdot d_i^\ast = d_i^\ast.
        \end{gather*} Observe that the partial multiplication on $T_{m}^{n}$ extends the multiplication on $S_{m}^{n}$. The unary operation $\ast \colon T_{m}^{n} \to T_{m}^{n}$ is given by
            $$ d_i \mapsto d_i^\ast, \quad d_i^\ast \mapsto d_i^\ast, \quad s_{i}^{j} \mapsto s^\ast \quad\text{and}\quad s^\ast \mapsto s^\ast. $$
        Furthermore, we have $\mathfrak{D}(S_{m}^{n}) = \mathfrak{D}(T_{m}^{n})$. Hence, it follows from Corollary \ref{coro:semiconst} and Example \ref{exe:semiconst} that $T_{m}^{n}$ is a semigroupoid if and only if $m=1$.

        In the special case where $m=2$ and $n=1$, the operation on $T_2^1$ can be represented by the following graph, obtained from the graph representing $S_2^1$ by adding loops to most of its vertex.
        \begin{center}
            \begin{tikzpicture}[scale=0.75]
                \tikzstyle{every path}=[draw];
    
                \node (e0) at (0,0) {$\circ$};
                \node (e1) at (3,0) {$\circ$};
                \node (e2) at (6,0) {$\circ$};
                \node (e3) at (9,0) {$\circ$};
    
                \path[>->] (e0) to node[above]{$s_{1}^{1}$} (e1);
                \path[>-] (e1) to node[above]{$d_1$} (e2);
                \path[>-] (e2) to node[above]{$d_2$} (e3);
                \path[>->,bend right] (e0) to node[below]{$s_{2}^{1}$} (e2);
                \path[>-,bend left=45] (e0) to node[above]{$s_{3}^{1}$} (e3);
                \path[>->] (e0) edge [out=90,in=135,looseness=10] node[above right] {$s^\ast$} (e0);
                \path[>->] (e1) edge [out=90,in=135,looseness=10] node[above right] {$d_1^\ast$} (e1);
                \path[>->] (e2) edge [out=90,in=135,looseness=10] node[above right] {$d_2^\ast$} (e2);
            \end{tikzpicture}
        \end{center}
    \end{exe}
    
    In Section \ref{sec:6}, we develop a method for constructing right restriction semiconstellations by adjoining idempotent elements to a given semiconstellation. The construction in Example \ref{exe:restriction} illustrates one way to obtain a right restriction semiconstellation from the semiconstellation $S_{m}^{n}$, but if $n>1$, then such right restriction semiconstellation is not unique.\\

    Dually, one may define a \emph{left restriction semiconstellation} as a $(2,1)$-algebra whose unary operation satisfies the axioms dual to \eqref{r1}-\eqref{r4}. Observe that $(S,\ast)$ is a right restriction semiconstellation if and only if $(S^{op},\ast)$ is a left restriction semiconstellation. 
    
    Since semiconstellations are inherently one-sided structures, it may seem more natural to consider right restriction structures on right semiconstellations and left restriction structures on left semiconstellations. In particular, for left restriction structures on right semiconstellations, the analogue of Remark \ref{obs:restriction} no longer holds. Nevertheless, the following example shows that right semiconstellations may indeed admit left restriction structures.
    
    \begin{exe}
        Let $S$ be a right semiconstellation. Consider the set $T = S \cup \{1\}$, where $1\notin S$ is a new element. The partial multiplication $\cdot$ on $T$ extends that of $S$, that is, for all $s,t \in S$,  $s\cdot t=st,$ whenever $st$ is defined in $S$, and is further defined by $1\cdot x=x,$ for all $x\in T.$
        
         Define a unary operation $^+ \colon T \to T$ by $x^+ = 1$, for all $x \in T$. Then $(T,+)$ is a left restriction right semiconstellation. Note that ${^{1}T} = \{1\}$ and ${^{1s}T} = {^{s}T} = {^{s}S} \cup \{1\}$, for all $s \in S$. Therefore $1 \in \mathfrak{D}(T)$, except when $S^{(2)} = \emptyset$ in which case $\mathfrak{D}(T) = \emptyset$. By Corollary \ref{coro:semiconst}, with the exception of the case $S^{(2)} = \emptyset$, the semiconstellation $T$ is not a semigroupoid.
    \end{exe}
    
    In this paper we are mainly interested in right restriction structures on right semiconstellations. Accordingly, unless explicitly stated otherwise, the term \emph{restriction} always refers to a right restriction structure. Throughout the remainder of this section, $(S,\ast)$ denotes a restriction semiconstellation.
    
    We now establish some basic properties of restriction semiconstellations. 
    
    \begin{lemma} \label{lema:restriction}
        Let $e \in S^\ast$ and $s,t \in S$ be such that $st$ is defined. Then:
        \begin{enumerate}
            \item $ee$ is defined and $ee = e$.
            \item $e^\ast = e$.
            \item $(st)^\ast = t^\ast (st)^\ast = (s^\ast t)^\ast$.
        \end{enumerate}
    
        \begin{proof}
            (1) If $e \in S^\ast$, then there is $s \in S$ such that $e = s^\ast$. From \eqref{r1} and Remark \ref{obs:restriction}, we obtain that $ss^\ast$ is defined, and thus $s^\ast s^\ast$ is defined. Therefore,
                $$ ee = s^\ast s^\ast \overset{\eqref{r3}}{=} (ss^\ast)^\ast  \overset{\eqref{r1}}{=} s^\ast = e. $$
    
            \noindent(2) Let $e = s^\ast$. Then,
                $$ e = s^\ast \overset{\eqref{r1}}{=} s^\ast (s^\ast)^\ast \overset{\eqref{r2}}{=} (s^\ast)^\ast s^\ast \overset{\eqref{r3}}{=} (s^\ast s^\ast )^\ast \overset{(1)}{=} (s^\ast)^\ast = e^\ast. $$
    
            \noindent(3) In Remark \ref{obs:restriction} we proved that, if $st$ is defined, then $s^\ast t$ is defined and $(st)^\ast = t^\ast (st)^\ast$. For the second equality, we have
                $$ (s^\ast t)^\ast \overset{\eqref{r4}}{=} (t(st)^\ast)^\ast \overset{\eqref{r3}}{=} t^\ast (st)^\ast = (st)^\ast. $$
        \end{proof}
    \end{lemma}
    
    \begin{lemma} \label{lema:order}
        Define a relation on $S$ by $s \leq t$ if and only if $ts^\ast$ is defined and $ts^\ast = s$. Then:
        \begin{enumerate}
            \item $s \leq t$ if and only if exists $e \in S^\ast$ such that $te$ is defined and $te = s$.
            \item The relation $\leq$ is a partial order on $S$.
        \end{enumerate}
    
        \begin{proof}
            (1) If $s \leq t$, then $s^\ast \in S^\ast$, $ts^\ast$ is defined and $ts^\ast = s$. Conversely, suppose that exists $e \in S^\ast$ such that $te$ is defined and $te = s$. Then,
                $$ s^\ast = (te)^\ast \overset{\ref{lema:restriction}(2)}{=} (te^\ast)^\ast \overset{\eqref{r3}}{=} t^\ast e^\ast \overset{\ref{lema:restriction}(2)}{=} t^\ast e. $$
            Since $tt^\ast$ and $t^\ast e = s^\ast$ are defined, it follows from \eqref{sc2} that $t(t^\ast e) = ts^\ast$ is defined, and
                $$ ts^\ast = t(t^\ast e) \overset{\eqref{sc2}}{=} (tt^\ast)e \overset{\eqref{r1}}{=} te = s. $$
    
            \noindent(2) Reflexivity follows from \eqref{r1}. If $s \leq t$ and $t \leq r$, then $ts^\ast$ and $rt^\ast$ are defined. Hence,
                $$ s = ts^\ast = (rt^\ast)s^\ast \overset{\eqref{sc2}}{=} r(t^\ast s^\ast). $$
            That is, $t^\ast s^\ast$ and $r(t^\ast s^\ast)$ are defined, and $r(t^\ast s^\ast) = s$. By  \eqref{r3} we have $t^\ast s^\ast = (ts^\ast)^\ast \in S^\ast$. Therefore, it follows from (1) that $s \leq r$ and the relation is transitive. Lastly, suppose that that $s \leq t$ and $t \leq s$. Then $ts^\ast$ and $st^\ast$ are defined, $ts^\ast = s$ and $st^\ast = t$. Thus,
                $$ s = ts^\ast = (st^\ast)s^\ast \overset{\eqref{sc2}}{=} s(t^\ast s^\ast) \overset{\eqref{r2}}{=} s(s^\ast t^\ast) \overset{\eqref{sc2}}{=} (ss^\ast) t^\ast \overset{\eqref{r1}}{=} st^\ast = t. $$
            This proves that the relation $\leq$ is anti-symmetric, concluding that $\leq$ is a partial order.
        \end{proof}
    \end{lemma}
    
    For the next result, we regard semigroups as semiconstellations satisfying $S^{(2)} = S \times S$.
    
    \begin{prop} \label{prop:restriction}
        Let $\omega = \{ (e,f) \in S^\ast \times S^\ast \colon ef \text{ is defined} \}$. Then:
        \begin{enumerate}
            \item $\omega$ is an equivalence relation on $S^\ast$ and, for each $e \in S^\ast$, the equivalence class $\omega(e)$ of $e$ under $\omega$ is a semigroup with operation induced by $S$.
    
            \item Let $e,f \in S^\ast$. Then $f \in \omega(e)$ if and only if ${^eS} = {^fS}$ if and only if $S^e = S^f$.
        \end{enumerate}
        In particular, $S^\ast$ is a disjoint union of semigroups.
    
        \begin{proof}
            (1) The relation $\omega$ is reflexive by Lemma \ref{lema:restriction}(1) and symmetric by \eqref{r2}. Suppose that $ef$ and $fg$ are defined. From \eqref{sc1} we obtain that $(ef)g$ is defined, and in this case
                $$ (ef)g \overset{\eqref{r2}}{=} (fe)g \overset{\eqref{sc2}}{=} f(eg). $$
            Therefore, $eg$ is defined. This proves that $\omega$ is transitive and, hence, an equivalence relation. Consequently, $S^\ast$ is the disjoint union (as a set) of the equivalence classes of $\omega$. Furthermore, given $e,f \in S^\ast$, it follows that $ef$ is defined if and only if $f \in \omega(e)$, and in this case
                $$ ef \overset{\ref{lema:restriction}(1)}{=} (ee)f \overset{\eqref{sc2}}{=} e(ef). $$
            That is, $e(ef)$ is defined, and hence $ef \in \omega(e)$. Since $\omega$ is an equivalence relation, we conclude that, for $f,f' \in \omega(e) = \omega(f)$, the composition $ff'$ is defined and $ff' \in \omega(f) = \omega(e)$. This proves that each equivalence class of $\omega$ is a semigroup, concluding that $S^\ast$ is a disjoint union of semigroups.\\
    
            \noindent(2) Suppose that $f \in \omega(e)$. From Remark \ref{obs:restriction}, we obtain that
                $$ sf \text{ is defined} \iff s^\ast f \text{ is defined} \iff s^\ast \in \omega(f) = \omega(e). $$
            That is, ${^fS} = \{ s \in S \colon s^\ast \in \omega(e) \}$. Since this is valid for all $f \in \omega(e)$, we obtain in particular that ${^fS} = {^eS}$. Furthermore, since $ef$ is defined, we have
                $$ S^f \overset{\ref{obs:restriction}}{=} S^{ef} \overset{\eqref{r2}}{=} S^{fe} \overset{\ref{obs:restriction}}{=} S^e. $$
            Conversely, it follows from Lemma \ref{lema:restriction}(1) that $ee$ is defined. That is, $e \in {^eS} \cap S^e$. Therefore, if either ${^fS} = {^eS}$ or $S^f = S^e$, then $ef$ or $fe$ are defined and, hence, $f \in \omega(e)$.
        \end{proof}
    \end{prop}
    
    To end this subsection, we characterize the (2,1)-algebras that are, simultaneously, a right constellation and a restriction semiconstellation. Such semiconstellations are essential to Section \ref{sec:6} and appear in the last subsection of Section \ref{sec:4}.
    
    In the following, we say that an element $e \in S$ is an \textit{identity} if $ee$ is defined and, whenever $se$ and $et$ are defined, we have $se = s$ and $et = t$. We denote by $S_0$ the set of identities of $S$.
    
    \begin{prop} \label{prop:reduced}
        Let $S$ be a semiconstellation and $\ast \colon S \to S$ be a unary operation on $S$. Then, the following assertions are equivalent:
        \begin{enumerate}
            \item $s^\ast \in S_0 \cap S^s$, for all $s \in S$.
            \item $(S,\ast)$ is a right constellation and a restriction semiconstellation.
        \end{enumerate}
    
        \begin{proof}
            Assume that $(S,\ast)$ is a right constellation and a restriction semiconstellation. Then $ss^\ast$ is defined by \eqref{r1}, and $s^\ast s^\ast$ is defined by Lemma \ref{lema:restriction}(1). Suppose that $s^\ast t$ is defined. Then $s^\ast t = t$ by \eqref{const4}. On the other hand, if $ts^\ast$ is defined, then
                $$ ts^\ast \overset{\eqref{r1}}{=} (tt^\ast)s^\ast \overset{\eqref{sc2}}{=} t(t^\ast s^\ast) \overset{\eqref{r2}}{=} t(s^\ast t^\ast) \overset{\eqref{const4}}{=} tt^\ast \overset{\eqref{r1}}{=} t. $$
            This concludes that $s^\ast \in S_0 \cap S^s$, for all $s \in S$. Conversely, assume (1). We prove that axioms \eqref{const3}, \eqref{const4} and \eqref{r1}-\eqref{r4} are satisfied.
            
            From $s^\ast \in S^s$, we have that $ss^\ast$ is defined and, since $s^\ast \in S_0$, it must be $ss^\ast = s$. Furthermore, if $s^\ast t$ is defined, then $s^\ast t = t$. This shows \eqref{r1} and \eqref{const4}. Since \eqref{r1} holds, it follows from Remark \ref{obs:restriction} that $S^s = S^{s^\ast}$, for all $s \in S$. In particular, if $s^\ast t$ is defined, then $t(st)^\ast $ is defined and, since $s^\ast, (st)^\ast \in S_0$, we conclude that $s^\ast t = t = t(st)^\ast$, proving \eqref{r4}.
            
            Note that, if $e,f \in S_0$ and $ef$ is defined, then $e = ef = f$. In particular, $s^\ast t^\ast$ is defined if and only if $s^\ast = t^\ast$, in which case $t^\ast s^\ast = s^\ast s^\ast$ is defined and $s^\ast t^\ast = s^\ast = t^\ast s^\ast$. Thus, \eqref{r2} is satisfied. Furthermore, it follows from Remark \ref{obs:restriction} that $st^\ast$ is defined if and only if $s^\ast t^\ast$ is defined if and only if $s^\ast = t^\ast$. In particular, $s^\ast$ is the only element in $S^\ast$ such that $ss^\ast$ is defined and $ss^\ast = s$. Hence, \eqref{const3} holds. Lastly, if $st^\ast$ is defined, then $t^\ast = s^\ast$ and
                $$ (st^\ast)^\ast = (ss^\ast)^\ast = s^\ast = ss^\ast = s^\ast t^\ast. $$
            Therefore, condition \eqref{r3} is satisfied, concluding that $(S,\ast)$ is both a right constellation and a restriction semiconstellation.
        \end{proof}
    \end{prop}
    
    \begin{defi} \label{defi:reduced}
        A restriction semiconstellation $(S,\ast)$ is called  \textit{reduced} if it satisfies $S^\ast \subseteq S_0$. That is, $(S,\ast)$ is reduced if $s^\ast$ is an identity, for all $s \in S$.
    \end{defi}
    
    \begin{exe}
        The restriction semiconstellations $(T_{m}^{n},\ast)$ from Example \ref{exe:restriction} are reduced.
    \end{exe}
    
    In the proof of Proposition \ref{prop:reduced} we used that, if $s^\ast, e \in S_0 \cap S^s$, then $e = s^\ast$. Hence, the set $S_0 \cap S^s$ is empty or contains a single element. This shows that a reduced restriction structure on a semiconstellation, if exists, is uniquely defined. In this case, we must have $S^\ast = S_0$ since, if $e \in S_0$, then $e = ee^\ast = e^\ast$. The following result provides further characterizations of reduced semiconstellations.
    
    \begin{prop} \label{prop:char-reduced}
        The following conditions are equivalent:
        \begin{enumerate}
            \item $(S,\ast)$ is a reduced semiconstellation.
            \item Every element of $S^\ast$ is a right identity; that is, whenever $te$ is defined, one has $te=t$.
            \item The partial order $\leq$ is trivial; that is, $s\leq t$ if and only if $s=t$.
            \item If $e,f \in S^\ast$ and $ef$ is defined, then $e = f$.
            \item If $st$ is defined, then $(st)^\ast = t^\ast$.
        \end{enumerate}
    
        \begin{proof}
            Trivially (1) implies (2) and, by \eqref{r4}, (2) implies (1). Is is straightforward from the definition of $\leq$ that (2) implies (3). If $e,f \in S^\ast$ are such that $ef$ is defined, then $ef \leq e$ and $ef \leq f$. Therefore, (3) implies (4). If $st$ is defined then, by Lemma \ref{lema:restriction}(3), $t^\ast (st)^\ast$ is defined. Hence, (4) implies (5). Lastly, if condition (5) is satisfied, $e \in S^\ast$ and $se$ is defined, then
                $$ se \overset{\eqref{r1}}{=} (ss^\ast)e \overset{\eqref{sc2}}{=} s(s^\ast e) \overset{\eqref{r2}}{=} s(es^\ast) \overset{\eqref{r3}}{=} s(es^\ast)^\ast \overset{(5)}{=} s(s^\ast)^\ast \overset{\ref{lema:restriction}(2)}{=} ss^\ast \overset{\eqref{r1}}{=} s. $$
            Thus, (5) implies (2), which is equivalent to (1). This completes the proof.
        \end{proof}
    \end{prop}
    
    \subsection{Partial actions on sets} We introduce partial and global actions of restriction semiconstellations on sets, generalizing actions of categories, inverse semigroupoids and of restriction semigroups on sets. We also prove that partial actions of restriction semiconstellations on sets can be constructed by restricting global ones.\\

    In this subsection, $(S,\ast)$ denotes a restriction semiconstellation and $X$ denotes a set. An \emph{$S$-algebra} is a pair $(X;\alpha)$, where $\alpha=\{\alpha_s\}_{s\in S}$ is a family of partial unary operations on $X$. Equivalently, an $S$-algebra is a $(1)_{s\in S}$-algebra. For each $s\in S$, we denote the domain and image of $\alpha_s$ by ${}_sX$ and $X_s$, respectively.
    
    \begin{defi} \label{defi:actions}
        A \emph{partial action} of $(S,\ast)$ on $X$ is an $S$-algebra $(X;\alpha)$ satisfying the following conditions:
        \begin{enumerate} \Not{P}
            \item If $st$ is defined, then $\alpha_t^{-1}({_sX} \cap X_t) = {_{st}X} \cap {_tX}$. \label{P1}
            \item If $st$ is defined, then $\alpha_s(\alpha_t(x)) = \alpha_{st}(x)$, for all $x \in {_{st}X} \cap {_tX}$. \label{P2}
            \item $\alpha_e = id_{{_eX}}$, for all $e \in S^\ast$. \label{P3}
            \item ${_sX} \subseteq {_{s^\ast}X}$, for all $s \in S$. \label{P4}
        \end{enumerate}
        A \textit{global action} of $(S,\ast)$ on $X$ is a partial action satisfying the stronger condition:
        \begin{enumerate} \Not{P} \setcounter{enumi}{4}
            \item ${_sX} = {_{s^\ast}X}$, for all $s \in S$. \label{P5}
        \end{enumerate}
    \end{defi}
    
    \begin{obs}
        We now discuss some particular cases of restriction semiconstellations.
        \begin{enumerate}
            \item If $(S,\ast)$ is a restriction semigroup, then our definitions of partial and global actions coincide with those in \cite[Definitions 3.4 and 3.5]{gould2009partial} for restriction semigroups. In that paper, partial and global actions are called \emph{strong partial actions} and \emph{incomplete actions}, respectively.
            
            \item Every inverse semigroupoid $S$ has a natural restriction structure, given by $s^\ast = s^{-1}s$, for all $s \in S$. With this restriction structure, our definitions of partial and global actions generalize those in \cite[Definition 2.4]{demeneghi2025} for inverse semigroupoids.
    
            \item Every category $\mathcal{C}$ has a trivial restriction structure, given by $s^\ast = D(s)$, for all $s \in \mathcal{C}$. With this restriction structure, our definitions of partial and global actions generalize those in \cite[Definition 7]{nystedt2017} for categories.
    
            \item With the restriction structures described in (2) and (3), partial actions of inverse semigroupoids and categories coincide with the partial actions of $(S,\ast)$ satisfying the additional condition $X = \bigcup_{e \in S^\ast} {_eX}$. We have not included this condition in Definition \ref{defi:actions}, since it was not required for partial actions of restriction semigroups.
        \end{enumerate}
    \end{obs}

    We present two examples. In the first, we show that left composition on $S$ does not always define a partial action of $(S,\ast)$ on $S$. In the second, we show that the ``restricted composition'' on $S$ defines a global action of $(S,\ast)$ on $S$.
    
    \begin{exe} \label{exe:partial-action}
        Let $X = S$. For each $s \in S$, define a partial unary operation $\alpha_s$ by
            $$ {_sX} = S^s \quad\text{and}\quad \alpha_s(t) = st, \ \forall t \in {_sX}. $$
        Then $(S;\alpha)$ satisfies \eqref{P1}, \eqref{P2} and \eqref{P5}, but may not satisfy \eqref{P3}. In fact, note that
            $$ \alpha_e = id_{_eX} \iff et = \alpha_e(t) = t, \ \forall t \in S^e. $$
        That is, \eqref{P3} holds if and only if each $e \in S^\ast$ is a left identity in $S$. In particular, this condition is satisfied when $(S,\ast)$ is a reduced semiconstellation.
    \end{exe}
    
    \begin{exe} \label{exe:global}
        Let $Y = S$. For each $s \in S$, define a partial unary operation $\beta_s$ by
            $$ {_sY} = \{ t \in S^s \colon s^\ast t = t \} \quad\text{and}\quad \beta_s(t) = st, \ \forall t \in {_sY}. $$
        Note that ${_sY}$ is well defined since, by Remark \ref{obs:restriction}, we have $S^s = S^{s^\ast}$. We claim that $(Y;\beta)$ is a global action of $(S,\ast)$ on $S$. In fact, condition \eqref{P5} is satisfied due to Remark \ref{obs:restriction}, and condition \eqref{P3} hold by the definition of ${_eY}$. Assume that $st$ is defined and note that
            $$ {_{st}Y} \cap {_tY} = \{ r \in S^t \cap S^{st} \colon t^\ast r = r = (st)^\ast r \}, $$
        and
            $$ \beta_t^{-1}(Y_t \cap {_sY}) = \{ r \in S^t \colon t^\ast r = t, tr \in S^s, s^\ast (tr) = tr \}. $$
        
        Since $st$ is defined, we have that $S^{st} = S^t$ by Proposition \ref{prop:semiconst}. Thus, $r \in S^t$ if and only if $tr \in S^s$, and in this case $s(tr) = (st)r$. Hence, to show \eqref{P1}, it remains to prove that
            $$ t^\ast r = r \implies [(st)^\ast r = r \iff s^\ast (tr) = tr]. $$
        In fact, if $t^\ast r = r$ and $(st)^\ast r = r$, then
            $$ s^\ast (tr) \overset{\eqref{sc2}}{=} (s^\ast t)r \overset{\eqref{r4}}{=} (t(st)^\ast)r \overset{\eqref{sc2}}{=} t((st)^\ast r) = tr. $$
        On the other hand, if $t^\ast r = r$ and $s^\ast (tr) = tr$, then
            $$ (st)^\ast r \overset{\eqref{r4}}{=} r((st)r)^\ast \overset{\eqref{sc2}}{=} r(s(tr))^\ast \overset{\ref{lema:restriction}(3)}{=} r(s^\ast (tr))^\ast = r(tr)^\ast \overset{\ref{lema:restriction}(3)}{=} r(t^\ast r)^\ast = rr^\ast \overset{\eqref{r1}}{=} r. $$
        This proves \eqref{P1}. Condition \eqref{P2} follows from \eqref{sc2}, concluding that $(Y;\beta)$ is a global action.
    \end{exe}
    
    At times, it will be convenient to regard partial actions as certain functions between restriction semiconstellations.
    
    \begin{defi}
        A function $\varphi \colon S \to T$ between restriction semiconstellations $(S,\ast)$ and $(T,\ast)$ is called a \textit{strong $\wedge$-premorphism} if it satisfies the following conditions:
        \begin{enumerate} \Not{$\wedge$}
            \item If $st$ is defined, then $\varphi(s)\varphi(t)$ and $\varphi(st)\varphi(t)^\ast$ are defined and coincide. \label{pm1}
            \item $\varphi(s)^\ast = \varphi(s^\ast) \varphi(s)^\ast$, for all $s \in S$, or equivalently, $\varphi(s)^\ast \leq \varphi(s^\ast)$. \label{pm2}
        \end{enumerate}
    \end{defi}
    
    By Lemma \ref{lema:order}(1), condition \eqref{pm1} implies that $\varphi(s)\varphi(t) \leq \varphi(st)$ whenever $st$ is defined. In \cite{mcalister1977}, a function satisfying this inequality is called a \emph{$\wedge$-premorphism}, whereas one satisfying the reverse inequality is called a \emph{$\vee$-premorphism}. On the other hand, \cite{hollings2010extending} shows that not every $\wedge$-premorphism is a strong one.  Since only strong $\wedge$-premorphisms are needed in this paper, we shall simply refer to them as \emph{premorphisms}.
    
    \begin{lemma} \label{lema:premorphism}
        Let $\varphi \colon (S,\ast) \to (T,\ast)$ be a premorphism. Then:
        \begin{enumerate}
            \item If $e \in S^\ast$, then $\varphi(e) \in T^\ast$.
            \item If $s \leq t$, then $\varphi(s) \leq \varphi(t)$.
        \end{enumerate}
    
        \begin{proof}
            \noindent(1) Let $e \in S^\ast$. Then
                $$ \varphi(e) \overset{\eqref{r1}}{=} \varphi(e)\varphi(e)^\ast \overset{\ref{lema:restriction}(2)}{=} \varphi(e^\ast)\varphi(e)^\ast \overset{\eqref{pm2}}{=} \varphi(e)^\ast \in T^\ast. $$
    
            \noindent(2) If $s \leq t$, then $s = ts^\ast$. From \eqref{r1} and \eqref{pm1}, we obtain that $\varphi(s)\varphi(s^\ast)$ is defined. Hence,
            \begin{align*}
                \varphi(s) &= \varphi(s)\varphi(s)^\ast & \eqref{r1} \\
                &= \varphi(s) {\Big(} \varphi(s^\ast)\varphi(s)^\ast {\Big)} & \eqref{pm2} \\
                &= {\Big(} \varphi(s)\varphi(s^\ast) {\Big)} \varphi(s)^\ast & \eqref{sc2} \\
                &= {\Big(} \varphi(ts^\ast)\varphi(s^\ast)^\ast {\Big)} \varphi(s)^\ast & (1), \ref{lema:restriction}(2) \\
                &= {\Big(} \varphi(t) \varphi(s^\ast) {\Big)} \varphi(s)^\ast & \eqref{pm1} \\
                &= \varphi(t) {\Big(} \varphi(s^\ast) \varphi(s)^\ast{\Big)} & \eqref{sc2} \\
                &= \varphi(t) \varphi(s)^\ast. & \eqref{pm2}
            \end{align*}
            This proves that $\varphi(s) \leq \varphi(t)$.
        \end{proof}
    \end{lemma}
    
    Recall that $PT(X)$, the set of partial functions on $X$, is a restriction semigroup where
        $$ f \star g \colon g^{-1}({_fX} \cap X_g) \to f({_fX} \cap X_g), \quad (f \star g)(x) = f(g(x)), \ \forall x \in g^{-1}({_fX} \cap X_g), $$
    and $f^\ast = id_{_fX}$, for all $f \in PT(X)$.
    
    \begin{prop} \label{prop:actions}
        Let $(X;\alpha)$ be a $S$-algebra and denote by $\alpha \colon (S,\ast) \to PT(X)$ the function given by $\alpha(s) = \alpha_s$, for all $s \in S$. Then:
        \begin{enumerate}
            \item $(X;\alpha)$ is a partial action if and only if $\alpha \colon (S,\ast) \to PT(X)$ is a premorphism.
            \item $(X;\alpha)$ is a global action if and only if $\alpha \colon (S,\ast) \to PT(X)$ is a (2,1)-morphism.
        \end{enumerate}
    
        \begin{proof}
            \noindent(1) Suppose that $st$ is defined. Then
                $$ dom(\alpha_s \star \alpha_t) = \alpha_t^{-1}({_sX} \cap X_t) \quad\text{and}\quad dom(\alpha_{st} \star \alpha_t^\ast) = {_{st}X} \cap {_tX}. $$
            Hence, \eqref{P1} is equivalent to $dom(\alpha_s \star \alpha_t) = dom(\alpha_{st} \star \alpha_t^\ast)$, and in this case
                $$ (\alpha_{st} \star \alpha_t^\ast)(x) = \alpha_{st}(x) \quad\text{and}\quad (\alpha_s \star \alpha_t)(x) = \alpha_s(\alpha_t(x)), \ \quad \forall x \in {_{st}X} \cap {_tX}. $$
            Therefore, $(X;\alpha)$ satisfies \eqref{P1} and \eqref{P2} if and only if $\alpha$ satisfies \eqref{pm1}. Now, suppose that $(X;\alpha)$ satisfies \eqref{P3} and \eqref{P4}. Then
                $$ dom(\alpha_{s^\ast} \star \alpha_s^\ast) = {_{s^\ast}X} \cap {_sX} \overset{\eqref{P4}}{=} {_sX} \quad\text{and}\quad (\alpha_{s^\ast} \star \alpha_s^\ast)(x) = \alpha_{s^\ast}(x) \overset{\eqref{P3}}{=} x, \ \forall x \in {_sX}. $$
            This shows that $\alpha_{s^\ast} \star \alpha_s^\ast = id_{_sX} = \alpha_s^\ast$, proving \eqref{pm2}. Conversely, suppose that $\alpha$  satisfies \eqref{pm2}. Then \eqref{P3} follows from Lemma \ref{lema:premorphism}(1), and
                $$ {_sX} = dom(\alpha_s^\ast) \overset{\eqref{pm2}}{=} dom(\alpha_{s^\ast} \star \alpha_s^\ast) = {_{s^\ast}X} \cap {_sX}. $$
             Hence, condition \eqref{P4} holds, concluding that $(X;\alpha)$ is a partial action if and only if the function $\alpha \colon (S,\ast) \to PT(X)$ is a premorphism.\\
    
             \noindent(2) Suppose that $(X;\alpha)$ is a global action. Then, from \eqref{P3} and \eqref{P5}, we conclude that $\alpha_{s^\ast} = \alpha_s^\ast$. That is, $\alpha$ is compatible with the unary operation $^\ast$. Since every global action is a partial action, we know from (1) that $\alpha$ is a premorphism. On the other, if $st$ is defined, then $(st)^\ast \leq t^\ast$ by Lemma \ref{lema:restriction}(3). From Lemma \ref{lema:premorphism}(2) we obtain that $\alpha_{(st)^\ast} \leq \alpha_{t^\ast}$ and, thus
             \begin{align*}
                 \alpha_s \star \alpha_t &= \alpha_{st} \star \alpha_t^\ast & \eqref{pm1} \\
                 &= \alpha_{st} \star \alpha_{st}^\ast \star \alpha_t^\ast & \eqref{r1} \\
                 &= \alpha_{st} \star \alpha_{(st)^\ast} \star \alpha_{t^\ast} \\
                 &= \alpha_{st} \star \alpha_{(st)^\ast} & \ref{lema:premorphism}(2) \\
                 &= \alpha_{st} \star \alpha_{st}^\ast \\
                 &= \alpha_{st}. & \eqref{r1}
             \end{align*}
             This proves that $\alpha$ is compatible with composition and, hence, is a (2,1)-morphism. Conversely, suppose that $\alpha$ is a (2,1)-morphism. Then, for all $s \in S$, we have
                $$ {_{s^\ast}X} = dom(\alpha_{s^\ast}) = dom(\alpha_s^\ast) = {_sX}. $$
            Hence, $(X;\alpha)$ satisfies \eqref{P5}. Since every (2,1)-morphism is a premorphism, we conclude from (1) and the previous equality that $(X;\alpha)$ is a global action.
        \end{proof}
    \end{prop}
    
    \begin{corollary} \label{coro:actions}
        Let $(X;\alpha)$ be a partial action of $(S,\ast)$. Then:
        \begin{enumerate}
            \item If $s \leq t$, then ${_sX} \subseteq {_tX}$ and $\alpha_t|_{_sX} = \alpha_s$.
            \item If $e,f \in S^\ast$ and $ef$ is defined, then ${_{ef}X} = {_eX} \cap {_fX}$.
            \item If $(X;\alpha)$ is a global action, then ${_{st}X} \cap {_tX} = {_{st}X}$, whenever $st$ is defined.
        \end{enumerate}
    
        \begin{proof}
            \noindent(1) By Proposition \ref{prop:actions}, $\alpha \colon (S,\ast) \to PT(X)$ is a premorphism. Hence, it follows from Lemma \ref{lema:premorphism}(2) that $s \leq t$ implies $\alpha_s \leq \alpha_t$. That is, ${_sX} \subseteq {_tX}$ and $\alpha_t|_{_sX} = \alpha_s$.\\
    
            \noindent(2) By \eqref{r3} and Lemma \ref{lema:restriction}(2), we obtain that $S^\ast$ is a (2,1)-subalgebra of $(S,\ast)$. Thus, the partial action $(X;\alpha)$ induces a global action $\{\alpha_e\}_{e \in S^\ast}$ of $S^\ast$ on $X$. By Proposition \ref{prop:actions}, $\alpha|_{S^\ast} \colon (S^\ast,\ast) \to PT(X)$ is a (2,1)-morphism. Therefore, if $ef$ is defined, then
                $$ {_{ef}X} = dom(\alpha_{ef}) = dom(\alpha_e \star \alpha_f) \overset{\eqref{P3}}{=} dom(id_{_eX} \star id_{_fX}) = {_eX} \cap {_fX}. $$
    
            \noindent(3) Let $(X;\alpha)$ be a global action and suppose that $st$ is defined. Then
                $$ {_{st}X} \overset{\eqref{P5}}{=} {_{(st)^\ast}X} \overset{\ref{lema:restriction}(3)}{=} {_{t^\ast (st)^\ast}X} \overset{(2)}{=} {_{t^\ast}X} \cap {_{(st)^\ast}X} \overset{\eqref{P5}}{=} {_tX} \cap {_{st}X}. $$
        \end{proof}
    \end{corollary}
    
    To end this section, we show that partial actions can be constructed by restricting global ones. More precisely, since a partial action $(Y;\beta)$ is a $S$-algebra, given any subset $X$ of $Y$ we may consider the relative $S$-subalgebra $(Y;\beta)|_X$. In this case, $(Y;\beta)|_X$ is the pair $(X;\alpha)$, where $\alpha = \{\alpha_s\}_{s \in S}$ is the family of partial unary operations defined by
        $$ \alpha_s = \beta_s|_{_sX} \quad\text{where}\quad {_sX} = \{ x \in X \cap {_sY} \colon \beta_s(x) \in X \} = \beta_s^{-1}(X \cap Y_s) \cap X. $$
    The pair $(X;\alpha)$ is usually called the \textit{restriction} of $(Y;\beta)$ to $X$.
    
    \begin{prop} \label{prop:relative}
        The restriction of a global action $(Y;\beta)$ to a subset $X$ is a partial action.
        
        \begin{proof}
            If $e \in S^\ast$, then $\beta_e = id_{_eY}$ by \eqref{P3} and, thus, $\alpha_e = \beta_e|_{eX} = id_{_eY}|_{_eX} = id_{_eX}$. Therefore, $(X;\alpha)$ satisfies \eqref{P3}. Furthermore, for all $e \in S^\ast$, we have
                $$ {_eX} = \{ x \in X \cap {_eY} \colon \beta_e(x) \in X \} = X \cap {_eY}. $$
            Hence,
                $$ {_sX} \subseteq X \cap {_sY} \overset{\eqref{P5}}{=} X \cap {_{s^\ast}Y} = {_{s^\ast}X}. $$
            That is, $(X;\alpha)$ satisfies \eqref{P4}. To prove \eqref{P1} and \eqref{P2}, assume for a moment that, if $st$ is defined, then the following identities are valid:
            \begin{gather*}
                \beta_t^{-1}({_sX} \cap Y_t) \cap X = \alpha_t^{-1}({_sX} \cap X_t), \tag{I}\label{tag:I} \\
                \beta_s^{-1}(X \cap Y_{st}) \cap Y_t = \beta_s^{-1}(X \cap Y_s) \cap Y_t, \tag{II}\label{tag:II}.
            \end{gather*}
            Note that, by Proposition \ref{prop:actions}, $\beta \colon (S,\ast) \to PT(X)$ is a (2,1)-morphism. Hence,
                $$ \beta_{st}^{-1}(A) = (\beta_s \star \beta_t)^{-1}(A) \overset{\ref{prop:relations}}{=} \beta_t^{-1}(\beta_s^{-1}(A)), \quad \forall A \in \mathcal{P}(X). $$
            Furthermore, recall that $\beta_t^{-1}(A \cap B) = \beta_t^{-1}(A) \cap \beta_t^{-1}(B)$, for all $A,B \in \mathcal{P}(X)$. Thus,
            \begin{align*}
                {_{st}X} \cap {_tX} &= \beta_{st}^{-1}(X \cap {Y_{st}}) \cap \beta_t^{-1}(X \cap Y_t) \cap X \\
                &= \beta_{t}^{-1}(\beta_s^{-1}(X \cap {Y_{st}})) \cap \beta_t^{-1}(X \cap Y_t) \cap X \\
                &= \beta_t^{-1}{\Big(} \beta_s^{-1}(X \cap Y_{st}) \cap X \cap Y_t {\Big)} \cap X \\
                &\overset{\eqref{tag:II}}{=} \beta_t^{-1}{\Big(} \beta_s^{-1}(X \cap Y_{s}) \cap X \cap Y_t {\Big)} \cap X \\
                &= \beta_t^{-1}({_sX} \cap Y_t) \cap X \\
                &\overset{\eqref{tag:I}}{=} \alpha_t^{-1}({_sX} \cap X_t).
            \end{align*}
            Since $\alpha_s = \beta_s|_{_sX}$, if $x \in {_{st}X} \cap {_tX} \subseteq {_{st}Y} \cap {_tY}$, then
                $$ \alpha_s(\alpha_t(x)) = \beta_s(\beta_t(x)) \overset{\eqref{P2}}{=} \beta_{st}(x) = \alpha_{st}(x). $$
            This shows \eqref{P1} and \eqref{P2}. It remains to prove the claims \eqref{tag:I} and \eqref{tag:II}.\\
    
            \noindent\eqref{tag:I} If $x \in \beta_t^{-1}({_sX} \cap Y_t) \cap X \subseteq X \cap {_tY}$, then $\beta_t(x) \in {_sX} \cap Y_t \subseteq X$. From the definition of restriction, we obtain $x \in {_tX}$ and, hence, $\beta_t(x) = \alpha_t(x) \in X_t$. Therefore,
                $$ x \in \alpha_t^{-1}(\beta_t(x)) \subseteq \alpha_t^{-1}({_sX} \cap X_t). $$
            This shows the inclusion $\beta_t^{-1}({_sX} \cap Y_t) \cap X) \subseteq \alpha_t^{-1}({_sX} \cap X_t)$. For the reverse inclusion, note that $X_s \subseteq Y_s$. Since $\alpha_t$ is a restriction of $\beta_t$ to a subset of $X$, we obtain
                $$ \alpha_t^{-1}({_sX} \cap X_t) \subseteq \beta_t^{-1}({_sX} \cap X_t) \cap X \subseteq \beta_t^{-1}({_sX} \cap Y_t) \cap X. $$
            This proves that $\beta_t^{-1}({_sX} \cap Y_t) = \alpha_t^{-1}({_sX} \cap X_t)$.\\
    
            \noindent\eqref{tag:II} Again, it follows from Proposition \ref{prop:actions} that $\beta_s \star \beta_t = \beta_{st}$, whenever $st$ is defined. Hence,
            \begin{align*}
                Y_{st} = \beta_{st}({_{st}Y}) = \beta_s(\beta_t({_{st}Y})) = \beta_s(\beta_t(\beta_t^{-1}({_sY} \cap Y_t))) = \beta_s({_sY} \cap Y_t) \subseteq Y_s.
            \end{align*}
            Since ${_sY} \cap Y_t \subseteq \beta_s^{-1}(\beta_s({_sY} \cap Y_t))$ and $\beta_s^{-1}(X \cap {Y_s}) \subseteq {_sY}$, we obtain
            \begin{align*}
                \beta_s^{-1}(X \cap Y_{st}) \cap Y_t &= \beta_s^{-1}(X \cap Y_s \cap Y_{st}) \cap Y_t \\
                &= \beta_s^{-1}(X \cap Y_s) \cap \beta_s^{-1}(Y_{st}) \cap Y_t \\
                &= \beta_s^{-1}(X \cap Y_s) \cap \beta_s^{-1}(\beta_s({_sY} \cap Y_t)) \cap Y_t \\
                &= \beta_s^{-1}(X \cap Y_s) \cap \beta_s^{-1}(\beta_s({_sY} \cap Y_t)) \cap {_sY} \cap Y_t \\
                &= \beta_s^{-1}(X \cap Y_s) \cap {_sY} \cap Y_t \\
                &= \beta_s^{-1}(X \cap Y_s) \cap Y_t.
            \end{align*}
            This proves \eqref{tag:II}, concluding that $(X;\alpha)$ is a partial action.
        \end{proof}
    \end{prop}
    
    \section{The globalization problem for actions of \texorpdfstring{$(S,\ast)$}{}} \label{sec:4}
    
    In this section, we study the globalization problem for partial actions of restriction semiconstellations on sets. Proposition \ref{prop:relative} shows that every global action gives rise to a partial action by restriction. The globalization problem asks whether, conversely, given a partial action, it is possible to construct a global action whose restriction is isomorphic to the given partial action. As we shall see, this is not always the case. 
    
    Our strategy for addressing the globalization problem is the following. By Proposition \ref{prop:universal}, a partial action $(X;\alpha)$ is isomorphic to the restriction of a global action $(Y;\beta)$ if and only if there exists an injective full $S$-morphism $\varphi \colon (X;\alpha) \to (Y;\beta)$. Now suppose that there exists a subcategory $\mathcal{C}$ of $Alg(S)$, the category of $S$-algebras, containing all global actions of $(S,\ast)$ and such that $(X;\alpha)$ has a reflector $(\iota,(FX;\beta))$ in $\mathcal{C}$. Then Proposition \ref{prop:universal-2} implies that, whenever $(X;\alpha)$ is isomorphic to the restriction of a global action, $\iota \colon (X;\alpha) \to (FX;\beta)$ is necessarily an embedding. This reduces the globalization problem to two tasks: constructing reflectors for partial actions in a suitable subcategory of $Alg(S)$ containing all global actions, and characterizing when $\iota$ is an embedding.

    Following the usual terminology for partial actions, we call an injective full $S$-morphism an \emph{embedding}. A global action $(Y;\beta)$ is called a \emph{globalization} of a partial action $(X;\alpha)$ if there exists an embedding $(X;\alpha) \to (Y;\beta)$, and $(X;\alpha)$ is said to be \emph{globalizable} if it admits a globalization. In this terminology, the globalization problem for a given class of partial actions asks whether every partial action in that class is globalizable.
    
    \subsection{Constructing reflectors} We prove that every partial action has a reflector in the full subcategory of the category of $S$-algebras whose objects are \emph{almost global actions}.\\
    
    In this section, $(S,\ast)$ denotes a restriction semiconstellation and $(X;\alpha)$ a partial action of $(S,\ast)$. Our first goal is to construct an $S$-algebra $(FX;\beta)$. The following notation will be used in the remaining of this section. Let $S^1 = S \cup \{1\}$, where $1$ is a symbol not belonging to $S$, and consider the $S$-algebra $(S^1\times X;\lambda)$, where
        $$ dom(\lambda_s) = (S^s \cup \{1\}) \times X, \quad \lambda_s(t,x) = (st,x) \quad\text{and}\quad \lambda_s(1,x) = (s,x). $$
    Also, let $i\colon X\to S^1\times X$ be the map given by $i(x)=(1,x)$. Note that in general $i$ is not an $S$-morphism. We will construct a quotient of a relative $S$-subalgebra of $(S^1\times X;\lambda)$ in the following way. Consider the set
        $$ \overline{X} = \{ (s,x) \colon x \in {_{s^\ast}X} \} \cup \{ (1,x) \colon x \in X \} \subseteq S^1 \times X. $$
    Then, the operations of the relative $S$-subalgebra $(\overline{X};\beta) = (S^1 \times X;\lambda)|_{\overline{X}}$ are defined by
        $$ \beta_s = \lambda_s|_{_s\overline{X}}, \quad\text{where}\quad {_s\overline{X}} = \{ (t,x) \colon t \in S^s, x \in {_{(st)^\ast}X} \} \cup \{ (1,x) \colon x \in {_{s^\ast}X} \}. $$
    Note that ${_s\overline{X}} \subseteq \overline{X}$. Indeed, if $st$ is defined, then $(st)^\ast = t^\ast (st)^\ast \leq t^\ast$ by Lemma \ref{lema:restriction}(3). From Corollary \ref{coro:actions}, we conclude that ${_{(st)^\ast}X} \subseteq {_{t^\ast}X}$. Hence, $(t,x) \in {_s\overline{X}}$ implies $(t,x) \in \overline{X}$.

    Let $\simeq$ be an equivalence relation on $\overline{X}$. By Proposition \ref{prop:congruence-2}, the partial functions on $\overline{X}$ induce partial functions on the quotient set $\overline{X}/\simeq$ if and only if $\simeq$ is is invariant under $\beta$. Let $\pi \colon \overline{X} \to \overline{X}/\simeq$ denote the canonical projection. Then the map $\pi \circ i \colon X \to \overline{X}/\simeq$ is an $S$-morphism if and only if $(s,x) \in \overline{X}$ and $\pi(s,x) = \pi(1,\alpha_s(x))$, whenever $x \in {_sX}$. This motivates the definition of the relation $\sim$ on $\overline{X}$ by 
    \begin{align*}
        (s,x) \sim (1,y) \iff x \in {_sX} \text{ and } \alpha_s(x) = y.
    \end{align*}

    By Proposition \ref{prop:congruence}, there exists a smallest equivalence relation on $\overline{X}$ containing $\sim$ that is invariant under $\beta$, namely $\mathcal{E}_\beta(\sim)$. We denote this relation by $\simeq$ and set $FX = \overline{X}/\simeq$. We write $\pi(a,x)=a\otimes x$ for the equivalence class of $(a,x)$, where $a\in S^1$. The quotient $FX$ inherits a family of partial functions, which we continue to denote by $\beta=\{\beta_s\}_{s\in S}$, given by
        $$ {_sFX} = \{ a \otimes x \colon (a,x) \simeq (b,y) \text{ for some } (b,y) \in {_s\overline{X}} \} \quad\text{and}\quad \beta_s(a \otimes x) = \pi(\beta_s(b,y)). $$
    It follows from \eqref{P4} that $\iota \colon (X;\alpha) \to (FX;\beta)$, defined by $\iota(x) = 1 \otimes x$, is a $S$-morphism.
    
    \begin{obs} \label{obs:construction}
        The choice of the subset $\overline{X}\subseteq S^1\times X$ is not arbitrary. Indeed, assuming that the canonical projection $\pi \colon (\overline{X};\beta) \to (FX;\beta)$ is a strong $S$-morphism and that $(\pi\circ i,(FX;\beta))$ is a reflector of $(X;\alpha)$ in the full subcategory of $Alg(S)$ whose objects satisfy \eqref{P5}, one can prove that $\overline{X}$ is necessarily the subset defined above. Since the proof is rather lengthy and is not needed elsewhere in the paper, we omit it.
    \end{obs}

    In the remainder of this section, we prove that $(\iota,(FX;\beta))$ is a reflector for $(X;\alpha)$ in a suitable subcategory of $Alg(S)$. To this end, we recall the construction of the equivalence relation $\simeq$. Let $\sim$ be the relation on $\overline{X}$ defined above, and let $\beta = \{\beta_s\}_{s \in S}$ be the family of partial functions on $\overline{X}$. We define inductively a sequence of equivalence relations. Let $\simeq^{(0)}$ be the smallest equivalence relation containing $\sim$. For each $n\geq0$, define
        $$ (a,x) \simeq_\beta^{(n)} (b,y) \iff \begin{matrix}
            \exists p \in S, (a',x), (b',y) \in {_p\overline{X}} \colon (a',x) \simeq^{(n)} (b',y), \\
            (a,x) = \beta_p(a',x) \text{ and } (b,y) = \beta_p(b',y).
        \end{matrix} $$
    Then let $\simeq^{(n+1)}$ be the smallest equivalence relation containing both $\simeq^{(n)}$ and $\simeq_\beta^{(n)}$. Finally, the equivalence relation $\simeq$ is the relation $\bigcup_{n \geq 0} \simeq^{(n)}$.
    
    \begin{prop} \label{prop:construction}
        Let $(FX;\beta)$ be the $S$-algebra constructed above. Then:
        \begin{enumerate}
            \item ${_sFX} = {_{s^\ast}FX}$, for all $s \in S$.
            \item $\beta_e = id_{_eFX}$, for all $e \in S^\ast$.
            \item If $st$ is defined, then
                $$ {_{st}FX} \subseteq \beta_t^{-1}(FX_t \cap {_sFX}) \quad\text{and}\quad \beta_{st}(x) = \beta_s(\beta_t(x)), \ \forall x \in {_{st}FX}. $$
        \end{enumerate}
    
        \begin{proof}
            \noindent(1) Note that
                $$ (1,x) \in {_s\overline{X}} \iff x \in {_{s^\ast}X} \overset{\ref{lema:restriction}(2)}{=} {_{(s^\ast)^\ast}X} \iff (1,x) \in {_{s^\ast}\overline{X}}, $$
            and
                $$ (t,x) \in {_s\overline{X}} \iff t \in S^s \overset{\ref{obs:restriction}}{=} S^{s^\ast}, x \in {_{(st)^\ast}X} \overset{\ref{lema:restriction}(3)}{=} {_{(s^\ast t)^\ast}X} \iff (t,x) \in {_{s^\ast}\overline{X}}. $$
            Thus, $a \otimes x \in {_sFX}$ if and only if $(a,x) \simeq (b,y) \in {_s\overline{X}} = {_{s^\ast}\overline{X}}$ if and only if $a \otimes x \in {_{s^\ast}FX}$.\\
    
            \noindent(2) Let $a \otimes x \in {_eFX}$. Then $(a,x) \simeq (b,y)$ for some $(b,y) \in {_e\overline{X}}$. If $b = 1$, then $y \in {_{e^\ast}X}$. But then $(e,y) \in \overline{X}$ and $(e,y) \sim (1,\alpha_e(y)) = (1,y)$ by \eqref{P3}. From Lemma \ref{lema:restriction}(1), we obtain $e \in S^e$ and $y \in {_{e^\ast}X} = {_{(ee)^\ast}X}$. That is, $(a,x) \simeq (1,y) \simeq (e,y)$ where $(e,y) \in {_e\overline{X}}$. Therefore, we can assume that $b \in S$. In this case, we have $b \in S^e$ and $y \in {_{b^\ast}X} \cap {_{(eb)^\ast}X}$. Hence,
                $$ ((eb)^\ast,y) \sim (1,\alpha_{(eb)^\ast}(y)) \overset{\eqref{P3}}{=} (1,y) \quad\text{and}\quad \beta_e(b,y) = (eb,y) \overset{\eqref{r4}}{=} (b(eb)^\ast,y). $$
            Note that $(1,y), ((eb)^\ast,y) \in {_b\overline{X}}$ and, thus
                $$ (b,y) = \beta_b(1,y) \simeq_\beta^{(0)} \beta_b((eb)^\ast,y) = (b(eb)^\ast,y) \overset{\eqref{r4}}{=} (eb,y) = \beta_e(b,y). $$
            This shows $\beta_e(a \otimes x) = \pi(\beta_e(b,y)) = b \otimes y = a \otimes x$, concluding that $\beta_e = id_{_eFX}$.\\
                
            \noindent(3) Suppose that $st$ is defined and let $a \otimes x \in {_{st}FX}$. Then $(a,x) \simeq (b,y)$ for some $(b,y) \in {_{st}\overline{X}}$. If $b \in S$, then $b \in S^{st}$ and $x \in {_{b^\ast}X} \cap {_{((st)b)^\ast}X}$. Note that
                $$ ((st)b)^\ast \overset{\eqref{sc2}}{=} (s(tb))^\ast \overset{\ref{lema:restriction}(3)}{=} (tb)^\ast(s(tb)^\ast)^\ast \leq (tb)^\ast. $$
            Thus, it follows from Corollary \ref{coro:actions}(1) that $y \in {_{((st)b)^\ast}X} \subseteq {_{(tb)^\ast}X}$. By Proposition \ref{prop:semiconst}, we have $b \in S^{st} = S^t$, from where we conclude that
                $$ (a, x) \simeq (b, y) \in {_{st}\overline{X}} \cap {_t\overline{X}} \quad\text{and}\quad \beta_t(a \otimes x) = tb \otimes y. $$
            On the other hand, it follows from \eqref{sc1} that $tb \in S^s$ and $y \in {_{(s(tb))^\ast}X}$. That is, $(tb,y) \in {_s\overline{X}}$. But then $a \otimes x = b \otimes y \in \beta_t^{-1}(FX_t \cap {_sFX})$ and
                $$ \beta_s(\beta_t(a \otimes x)) = \beta_s(tb \otimes y) = s(tb) \otimes y = (st)b \otimes y = \beta_{st}(a \otimes y).  $$
            Analogously, if $b = 1$, then $x \in {_{(st)^\ast}X} \subseteq {_{t^\ast}X}$ implies that $(1,y) \in {_t\overline{X}}$ and $\beta_t(a \otimes x) = t \otimes y$ where $(t,y) \in {_s\overline{X}}$. Hence, $1 \otimes y \in \beta_t^{-1}(FX_t \cap {_sFX})$ and $\beta_s(\beta_t(a \otimes x)) = st \otimes y = \beta_{st}(a \otimes x)$.
        \end{proof}
    \end{prop}
    
    \begin{defi} \label{defi:almost}
        We say that a $S$-algebra $(Y;\gamma)$ is an \textit{almost global action} of $(S,\ast)$ if it satisfies \eqref{P5} and, whenever $st$ is defined, 
            $$ {_{st}Y} \subseteq \gamma_t^{-1}(Y_t \cap {_sY}) \quad\text{and}\quad \gamma_{st}(x) = \gamma_s(\gamma_t(x)), \ \forall x \in {_{st}Y}. $$
        We denote by $\mathcal{A}_a(S,\ast)$ the full subcategory of $Alg(S)$ whose objects are almost global actions.
    \end{defi}

    Note that the global actions of $(S,\ast)$ are precisely the almost global actions satisfying condition \eqref{P3} and the inclusion $\gamma_t^{-1}(Y_t \cap {_sY}) \subseteq {_{st}Y}$, whenever $st$ is defined. Therefore, the full subcategory $\mathcal{A}(S,\ast)$ of $Alg(S)$, whose objects are the global actions of $(S,\ast)$, is contained in the category $\mathcal{A}_a(S,\ast)$. By Proposition \ref{prop:construction}, we have that $(FX;\beta) \in \mathcal{A}_a(S,\ast)$.

    \begin{theorem} \label{teo:construction}
        The pair $(\iota,(FX;\beta))$ is a reflector for $(X;\alpha)$ in the subcategory $\mathcal{A}_a(S,\ast)$.
    
        \begin{proof}
            Let $\varphi \colon (X;\alpha) \to (Y;\gamma)$ be a $S$-morphism, where $(Y;\gamma)$ is an almost global action. We need to prove that there exists a unique $S$-morphism $\Phi \colon (FX;\beta) \to (Y;\gamma)$ satisfying $\Phi \circ \iota = \varphi$. Note that, if $s \otimes x \in FX$, then $1 \otimes x \in {_sFX}$ and $\beta_s(1 \otimes x) = s \otimes x$. Therefore, if $\Phi$ is a $S$-morphism satisfying $\Phi \circ \iota = \varphi$, then
                $$ \Phi(s \otimes x) = \Phi(\beta_s(\iota(x)) = \gamma_s(\Phi(\iota(x)) = \gamma_s(\varphi(x)) \quad\text{and}\quad \Phi(1 \otimes x) = \Phi(\iota(x)) = \varphi(x). $$
            That is, the $S$-morphism $\Phi$ satisfying $\Phi \circ \iota = \varphi$, if exists, is uniquely defined.
    
            We begin showing that $\Phi \colon FX \to Y$ is well defined. Consider the function $\Phi \colon \overline{X} \to Y$ given by $\Phi(s,x) = \gamma_s(\varphi(x))$ and $\Phi(1,x) = \varphi(x)$. The element $\Phi(1,x)$ is always well defined. On the other hand, if $(s,x) \in \overline{X}$, then $x \in {_{s^\ast}X}$. Since $\varphi \colon (X;\alpha) \to (Y;\gamma)$ is a $S$-morphism and $(Y;\gamma)$ satisfies \eqref{P5}, it follows that $\varphi(x) \in {_sY} = {_{s^\ast}Y}$. Hence, the element $\Phi(s,x)$ is well defined. Now we prove that $(a,x) \simeq (b,y)$ implies $\Phi(a,x) = \Phi(b,y)$. Since $\simeq$ is the relation $\bigcup_{n \geq 0} \simeq^{(n)}$, we proceed by induction in $n \geq 0$.
    
            Suppose that $(a,x) \simeq^{(0)} (b,y)$. If $a,b \in S$, then $x \in {_aX}$, $y \in {_bX}$ and $\alpha_a(x) = \alpha_b(y)$. Thus,
                $$ \Phi(a,x) = \gamma_a(\varphi(x)) = \varphi(\beta_a(x)) = \varphi(\alpha_b(y)) = \gamma_b(\varphi(y)) = \Phi(b,y). $$
            The proof is analogous for $a = 1$ or $b = 1$. Now, suppose that $(a',x) \simeq^{(n)} (b',y)$ implies $\Phi(a',x) = \Phi(b',y)$ and let $(a,x) \simeq_\beta^{(n)} (b,y)$. Then there are $p \in S$ and $(a',x), (b',y) \in {_p\overline{X}}$ such that $(a,x) = \beta_p(a',x)$, $(b,y) = \beta_p(b',y)$ and $(a',x) \simeq^{(n)} (b',y)$. Again, suppose that $a',b' \in S$. Then
                $$ (a,x) = (pa',x), \quad (b,y) = (pb',y) \quad\text{and}\quad \gamma_{a'}(\varphi(x)) = \gamma_{b'}(\varphi(y)), $$
            where the last equality follows from the induction hypothesis. From $(a,x) = (pa',x)$, we obtain $\varphi(x) \in {_{pa'}Y}$. Since $(Y;\gamma)$ is an almost global action, it follows that $\gamma_{pa'}(\varphi(x)) = \gamma_p(\gamma_{a'}(\varphi(x))$. Analogously, we conclude that $\gamma_{pb'}(\varphi(y)) = \gamma_p(\gamma_{b'}(\varphi(y))$. Hence,
                $$ \Phi(a,x) = \gamma_{pa'}(\varphi(x)) = \gamma_p(\gamma_{a'}(\varphi(x))) = \gamma_p(\gamma_{b'}(\varphi(y)) = \gamma_{pb'}(\varphi(y)) = \Phi(b,y). $$
            The case where $a' = 1$ or $b' = 1$ is analogous. This shows that, if $(a,x) \simeq^{(n)} (b,y)$ or $(a,x) \simeq_\beta^{(n)} (b,y)$, then $\Phi(a,x) = \Phi(b,y)$. Since $\simeq^{(n+1)}$ is defined by $\mathcal{E}(\simeq^{(n)} \cup \simeq_\beta^{(n)})$, it follows that $(a,x) \simeq^{(n+1)} (b,y)$ implies $\Phi(a,x) = \Phi(b,y)$. By induction, we conclude that $(a,x) \simeq (b,y)$ implies $\Phi(a,x) = \Phi(b,y)$. Therefore, the function $\Phi \colon \overline{X} \to Y$ induces the function $\Phi \colon FX \to Y$ described at the beginning of the proof.
    
            Lastly, we prove that $\Phi$ is a $S$-morphism $(FX;\beta) \to (Y;\gamma)$. In fact, suppose that $a \otimes x \in {_sFX}$. Then $(a,x) \simeq (b,y)$ for some $(b,y) \in {_s\overline{X}}$. Since $\Phi(a \otimes x) = \Phi(b \otimes y)$, we can assume that $(a,x) \in {_s\overline{X}}$. Suppose $a \in S$. Then $a \in S^s$ and $x \in {_{a^\ast}X} \cap {_{(sa)^\ast}X}$. Since $\varphi$ is a $S$-morphism and $(Y;\gamma)$ is an almost global action, it follows that
                $$ \varphi(x) \in {_{a^\ast}Y} \cap {_{(sa)^\ast}Y} = {_aY} \cap {_{sa}Y} \subseteq \gamma_a^{-1}(Y_a \cap {_sY}) \quad\text{and}\quad \gamma_{sa}(\varphi(x)) = \gamma_s(\gamma_a(\varphi(x)). $$
            Therefore, $\Phi(a \otimes x) = \gamma_a(\varphi(x)) \in {_sY}$ and $\gamma_s(\Phi(a \otimes x)) = \Phi(\beta_s(a \otimes x))$. The proof is analogous if $a = 1$. This concludes that $\Phi \colon (FX;\beta) \to (Y;\gamma)$ is a $S$-morphism and, by the first observation, it is the unique $S$-morphism satisfying $\Phi \circ \iota = \varphi$.
        \end{proof}
    \end{theorem}
    
    To end this subsection, we characterize, in terms of the relation $\simeq$, when the partial action $(X;\alpha)$ is isomorphic to the relative $S$-subalgebra $(FX;\beta)|_{\iota(X)}$. We recall that an embedding is an injective full $S$-morphism.
    
    \begin{prop} \label{prop:construction-2}
        The following assertions are equivalent:
        \begin{enumerate}
            \item $(X;\alpha)$ is isomorphic to a relative $S$-subalgebra of $(Y;\gamma) \in \mathcal{A}_a(S,\ast)$.
            \item The $S$-morphism $\iota \colon (X;\alpha) \to (FX;\beta)$ is an embedding.
            \item If $(s,x) \simeq (1,y)$, then $x \in {_sX}$ and $\alpha_s(x) = y$.
        \end{enumerate}
    
        \begin{proof}
            By Proposition \ref{prop:universal}, $(X;\alpha)$ is isomorphic to a relative $S$-subalgebra of a $S$-algebra $(Y;\gamma)$ if and only if there is an embedding $(X;\alpha) \to (Y;\gamma)$. Since $(FX;\beta)$ is an almost global action, (2) implies (1). On the other hand, since $(\iota,(FX;\beta))$ is a reflector for $(X;\alpha)$ in $\mathcal{A}_a(S,\ast)$, it follows from Proposition \ref{prop:universal-2} that (1) implies (2).\\
    
            Suppose that (2) is satisfied and let $(s,x) \simeq (1,y)$. Since $(s,x) \in \overline{X}$ we have $(1,x) \in {_s\overline{X}}$ and $(s,x) = \beta_s(1,x)$. Therefore,
                $$ \iota(x) = 1 \otimes x \in {_sFX} \quad\text{and}\quad \beta_s(\iota(x)) = s \otimes x = 1 \otimes y = \iota(y). $$
            Since $\iota$ is full, we must have $x \in {_sX}$ and, in this case $\iota(y) = \beta_s(\iota(x)) = \iota(\alpha_s(x))$. Since $\iota$ is injective, it follows that $\alpha_s(x) = y$, concluding (3)\\
    
            Suppose that (3) is satisfied. We prove that the $S$-morphism $\iota$ is an embedding. Note that, if $\iota(x) = \iota(y)$, then there is $(s,z) \in \overline{X}$ such that $(1,x) \simeq (s,z) \simeq (1,y)$ and, in this case, $x = \alpha_s(z) = y$. Therefore, $\iota$ is injective. Now, suppose that $\iota(x) \in {_sFX}$ and $\beta_s(\iota(x)) = \iota(y)$, for some $y \in X$. Then $(1,x) \simeq (a,z)$ for some $(a,z) \in {_s\overline{X}}$. If $a = 1$, then $(1,x) \simeq (1,z)$. Since $\iota$ is injective, we have $x = z$. Hence, $(1,x) \in {_s\overline{X}}$ and $(s,x) = \beta_s(1,x) \simeq (1,y)$, which implies $x \in {_sX}$. On the other hand, suppose that $a \in S$. Then $(1,x) \simeq (a,z)$ implies $z \in {_aX}$ and $\alpha_a(z) = x$. Therefore, from $(a,z) \in {_s\overline{X}}$, we obtain that
            \begin{align*}
                1 \otimes y = \iota(y) = \beta_s(\iota(x)) = sa \otimes z &\implies z \in {_{sa}X} \cap {_aX} \overset{\eqref{P1}}{=} \alpha_a^{-1}(X_a \cap {_sX}) \\
                &\implies x = \alpha_a(z) \in {_sX} \overset{\eqref{P4}}{\subseteq} {_{s^\ast}X} \\
                &\implies (1,x) \in {_s\overline{X}}.
            \end{align*}
            Again, we have $(s,x) = \beta_s(1,x) \simeq (1,y)$, which implies $x \in {_sX}$, concluding that $\iota$ is full.
        \end{proof}
    \end{prop}
    
    Since the category of global actions of $(S,\ast)$ is a subcategory of $\mathcal{A}_a(S,\ast)$, Proposition \ref{prop:construction-2} implies that, if $(X;\alpha)$ is globalizable, then the $S$-morphism $\iota \colon (X;\alpha) \to (FX;\beta)$ is an embedding. The converse, however, holds only when $(FX;\beta)$ is itself a global action. This will be the case in the following subsections.
    
    \subsection{Sufficient conditions} We show that, if disruptive elements act globally, then $(FX;\beta)$ is a global action and a globalization for $(X;\alpha)$. Consequently, we obtain a positive answer to the globalization problem for partial actions of restriction semigroupoids on sets.\\
    
    Recall from Lemma \ref{lema:equivalence} that, given an equivalence relation $R$, the smallest equivalence relation containing $R$ is the relation $\mathcal{E}(R)$ defined as follows: $x \mathcal{E}(R) y$ if and only if there exist $x_1,\dots,x_n$ such that $x=x_1$, $x_n=y$, and, for each $i=1,\dots,n-1$, either $x_iRx_{i+1}$ or $x_{i+1}Rx_i$. If $R$ is symmetric, we always have $x_iRx_{i+1}$ for every $i$. The integer $n \geq 1$ is called the \emph{length} of the sequence relating $x$ to $y$. If $n$ is minimal among all such sequences, we say that $x_1,\dots,x_n$ is a \emph{minimal sequence}.

    In this subsection, we study the relation $\simeq^{(1)}:=\mathcal{E}(\simeq^{(0)}\cup\simeq_\beta^{(0)})$. The following simple property of minimal sequences will be used repeatedly.

    \begin{lemma}\label{lem:minimal-sequence}
    Let $(a_1,x_1),\dots,(a_n,x_n) \in \overline{X}$ be a minimal sequence relating $(a,x)$ to $(b,y)$ with respect to $\simeq^{(1)}$. Then $a_2,\dots,a_{n-1} \in S$.
    \end{lemma}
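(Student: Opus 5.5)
We argue by contradiction using minimality: if some interior term had first coordinate $1$, we will produce a strictly shorter sequence relating $(a,x)$ to $(b,y)$. So suppose $2 \leq i \leq n-1$ and $a_i = 1$, i.e.\ $(a_i,x_i) = (1,x_i)$. Since $\simeq^{(0)}\cup\simeq_\beta^{(0)}$ is symmetric, consecutive terms satisfy $(a_{i-1},x_{i-1}) \, R \, (1,x_i)$ and $(1,x_i) \, R \, (a_{i+1},x_{i+1})$, where $R = \simeq^{(0)}\cup\simeq_\beta^{(0)}$. The goal is to show that $(a_{i-1},x_{i-1})$ and $(a_{i+1},x_{i+1})$ are already related by $R$, so that deleting $(1,x_i)$ shortens the sequence.

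The first step is to understand which pairs in $R$ can involve an element $(1,z)$. For $\simeq_\beta^{(0)}$, the related elements are of the form $\beta_p(a',z)$, and $\beta_p$ takes values $(pa',z)$ or $(p,z)$, whose first coordinate lies in $S$. So no element with first coordinate $1$ lies in the image of any $\beta_p$. Hence both relations $(a_{i-1},x_{i-1}) \, R \, (1,x_i)$ and $(1,x_i) \, R \, (a_{i+1},x_{i+1})$ must come from $\simeq^{(0)}$. I will record this as the key observation.

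Next, $(a_{i-1},x_{i-1}) \simeq^{(0)} (1,x_i) \simeq^{(0)} (a_{i+1},x_{i+1})$. Since $\simeq^{(0)}$ is an equivalence relation (transitive), $(a_{i-1},x_{i-1}) \simeq^{(0)} (a_{i+1},x_{i+1})$. Thus $(a_{i-1},x_{i-1}) \, R \, (a_{i+1},x_{i+1})$, and removing the $i$-th term yields a sequence of length $n-1$ relating $(a,x)$ to $(b,y)$, contradicting minimality. The endpoints are untouched, so the result applies only to $2\le i\le n-1$, as stated.

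The only point needing care is the observation that elements $(1,z)$ never occur in $\simeq_\beta^{(0)}$, i.e.\ that every $\beta_p$ on $\overline{X}$ lands in $S\times X$; this follows directly from the definition of $\lambda_p$. Everything else is transitivity of $\simeq^{(0)}$, and I do not expect a real obstacle.
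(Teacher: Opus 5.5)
Your proof is correct and follows the paper's argument: an interior term $(1,x_i)$ forces both adjacent links to be $\simeq^{(0)}$, and transitivity of $\simeq^{(0)}$ lets you delete that term, contradicting minimality. Your observation that every $\beta_p$ lands in $S\times X$, so no $(1,z)$ ever occurs in $\simeq_\beta^{(0)}$, supplies the justification that the paper leaves implicit for why those adjacent links must come from $\simeq^{(0)}$.
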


    \begin{proof}
    Suppose that $a_i=1$ for some $i\in\{2,\dots,n-1\}$. Then
        $$ (a_{i-1},x_{i-1}) \simeq^{(0)} (1,x_i) \simeq^{(0)} (a_{i+1},x_{i+1}). $$
    Since $\simeq^{(0)}$ is transitive, it follows that $(a_{i-1},x_{i-1}) \simeq^{(0)} (a_{i+1},x_{i+1})$. Hence, removing the term $(1,x_i)$ from the sequence yields a shorter sequence relating $(a,x)$ to $(b,y)$, contradicting its minimality. \end{proof}
    
    We now prove a technical lemma.
    
    \begin{lemma} \label{lema:technical}
        The following properties hold:
        \begin{enumerate}
            \item If $st$ is defined, then ${_tX} \cap {_{(st)^\ast}X} = \alpha_t^{-1}({_{s^\ast}X} \cap X_t)$.
        
            \item If $x \in {_sX}$ and $s \in S^p$, then $(s,x) \in {_p\overline{X}}$ if and only if $(1,\alpha_s(x)) \in {_p\overline{X}}$.
    
            \item If $(a,x) \simeq (b,y)$ for some $(b,y) \in {_p\overline{X}}$, then exists $(t,y) \in {_p\overline{X}}$ such that
                $$ t \in S^p, \quad (a,x) \simeq (t,y) \quad\text{and}\quad\beta_p(t,y) = \beta_p(b,y). $$
            Moreover, this property is valid for any transitive relation containing $\sim$.
        \end{enumerate}
    
        \begin{proof}
            \noindent(1) If $st$ is defined, then $(st)^\ast = t^\ast (st)^\ast$ by Lemma \ref{lema:restriction}(3). From \eqref{r1} and \eqref{sc2}, we obtain that $t(t^\ast (st)^\ast) = t(st)^\ast$ is defined. On the other hand, it follows from \eqref{P3} that ${_{(st)^\ast} X} = X_{(st)^\ast}$ and $\alpha_{(st)^\ast}^{-1}(A) = A$, for all subsets $A$ of $X_{(st)^\ast}$. Therefore,
            \begin{align*}
                {_tX} \cap {_{(st)^\ast}X} &= {_tX} \cap {_tX} \cap {_{(st)^\ast}X} \\
                &= {_tX} \cap \alpha_{(st)^\ast}^{-1}({_tX} \cap X_{(st)^\ast}) & \eqref{P3} \\
                &= {_tX} \cap {_{t(st)^\ast}X} \cap {_{(st)^\ast}X} & \eqref{P1} \\
                &= {_tX} \cap {_{s^\ast t}X} \cap {_{(st)^\ast}X} & \eqref{r4} \\
                &= {_tX} \cap {_{s^\ast t}X} \cap {_{(s^\ast t)^\ast}X} & \ref{lema:restriction}(3) \\
                &= {_tX} \cap {_{s^\ast t}X} & \eqref{P4} \\
                &= \alpha_t^{-1}({_{s^\ast}X} \cap X_t). & \eqref{P1}
            \end{align*}
    
            \noindent(2) Suppose that $x \in {_sX} \subseteq {_{s^\ast}X}$ and $s \in S^p$. Then
            \begin{align*}
                (s,x) \in {_p\overline{X}} &\iff x \in {_{(ps)^\ast}X} \cap {_sX} \\
                &\iff x \in \alpha_{s}^{-1}({_{p^\ast}X} \cap X_{s}) & (1) \\
                &\iff \alpha_s(x) \in {_{p^\ast}X} \\
                &\iff (1,\alpha_s(x)) \in {_p\overline{X}}.
            \end{align*}
    
            \noindent(3) Let $(a,x) \simeq (b,y)$ where $(b,y) \in {_p\overline{X}}$. If $b \in S$, then $b \in S^p$. Thus, we may take $(t,y) = (b,y)$. Suppose that $b = 1$. Then $(1,y) \in {_{p^\ast}X}$ if and only if $y \in {_{p^\ast}X}$. Since $p^\ast \in S^{p^\ast}$ by Lemma \ref{lema:restriction}, $y = \alpha_{p^\ast}(y)$ by \eqref{P3} and $(1,\alpha_{p^\ast}(y)) \in {_p\overline{X}}$, it follows from (2) that $(p^\ast,y) \in {_p\overline{X}}$. Furthermore, we have $(p^\ast,y) \sim (1,\alpha_{p^\ast}(x)) = (1,y) \simeq (a,x)$ and
                $$ \beta_{p}(1,y) = (p,y) \overset{\eqref{r1}}{=} (pp^\ast,y) = \beta_p(p^\ast,y). $$
            Therefore, we may choose $(t,y) = (p^\ast,y)$. Note that (1) is a property of $(X;\alpha)$, (2) is a property of $(\overline{X};\beta)$ and the proof of (3) only uses that $(p^\ast,y) \sim (1,y) \simeq (a,x)$ implies $(p^\ast,y) \simeq (a,x)$. Therefore, the argument is valid for any transitive relation containing $\sim$.
        \end{proof}
    \end{lemma}
    
    In Proposition \ref{prop:construction-2} we proved that $\iota \colon (X;\alpha) \to (FX;\beta)$ is an embedding if and only if the equivalence relation $\simeq$ is such that $(s,x) \simeq (1,y)$ implies $x \in {_sX}$ and $\alpha_s(x) = y$. The following result shows that $\simeq^{(1)}$ always satisfies this property.
    
    \begin{prop} \label{prop:sim1}
        If $(s,x) \simeq^{(1)} (1,y)$, then $x \in {_sX}$ and $\alpha_s(x) = y$.
    
        \begin{proof}
            Suppose that $(s,x) \simeq^{(1)} (1,y)$ and let $(a_1,x_1), \dots, (a_n,x_n) \in \overline{X}$ be a minimal sequence relating $(s,x)$ to $(1,y)$. In this case, for each $i=1,\dots,n-1$, we have
                $$ (a_i,x_i) \simeq^{(0)} (a_{i+1},x_{i+1}) \quad\text{or}\quad (a_i,x_i) \simeq_\beta^{(0)} (a_{i+1},x_{i+1}). $$
            Since $(s,x) \neq (1,y)$, it must be $n \geq 2$. We proceed the proof with induction in $n$.\\
    
            If $n=2$, then $(s,x) \simeq^{(0)} (1,y)$ implies $x \in {_sX}$ and $\alpha_s(x) = y$. If $n=3$, then we have two possible sequences relating $(s,x)$ to $(1,y)$. The sequences are
                $$ (s,x) \simeq^{(0)} (a_2,x_2) \simeq^{(0)} (1,y) \quad\text{and}\quad (s,x) \simeq_\beta^{(0)} (a_2,x_2) \simeq^{(0)} (1,y). $$
            Since $\simeq^{(0)}$ is transitive, the left-most sequence can be reduced to $(s,x) \simeq^{(0)} (1,y)$ and the result follows from the case $n=2$. For the right-most sequence, $(a_2,x_2) \simeq^{(0)} (1,y)$ implies $x_2 \in {_{a_2}X}$ and $\alpha_{a_2}(x_2) = y$, while $(s,x) \simeq_\beta^{(0)} (a_2,x_2)$ implies that exists $p \in S$ and $(s',x), (a'_2,x_2) \in {_p\overline{X}}$ such that $(s',x) \simeq^{(0)} (a'_2,x_2)$, $(s,x) = \beta_p(s',x)$ and $(a_2,x_2) = \beta_p(a'_2,x_2)$. By Lemma \ref{lema:technical}(3), we can choose representatives with $s',a'_2 \in S^p$. Then,
                $$ (s,x) = (ps',x), \quad (a_2,x_2) = (pa'_2,x_2) \quad\text{and}\quad x_2 \in {_{a'_2}X}, \ x \in {_{s'}X}, \ \alpha_{s'}(x) = \alpha_{a'_2}(x_2). $$
            From \eqref{P1}, we obtain that $x_2 \in {_{a_2}X} \cap {_{a'_2}X} = {_{pa'_2}X} \cap {_{a'_2}X} = \alpha_{a'_2}^{-1}(X_{a'_2} \cap {_pX})$, and in this case
            \begin{align*}
                \alpha_p(\alpha_{a'_2}(x_2)) \overset{\eqref{P2}}{=} \alpha_{pa'_2}(x_2) = \alpha_{a_2}(x_2) = y.
            \end{align*}
            On the other hand, we have $\alpha_{s'}(x) = \alpha_{a'_2}(x_2) \in {_pX} \cap {X_{s'}}$. Therefore, it follows from \eqref{P1} that $x \in \alpha_{s'}^{-1}(X_{s'} \cap {_pX}) = {_{ps'}X} \cap {_{s'}X} = {_sX} \cap {_{s'}X}$ and, thus
                $$ y = \alpha_p(\alpha_{a'_2}(x_2)) = \alpha_p(\alpha_{s'}(x)) \overset{\eqref{P2}}{=} \alpha_{ps'}(x) = \alpha_s(x). $$
            This concludes the case $n=3$.
    
            Lastly, let $n \geq 4$ and assume that, if there is a sequence of length $m < n$ relating $(t,z)$ to $(1,y)$, then $z \in {_tX}$ and $\alpha_t(z) = y$. Since $n \geq 4$, we have a sequence of length $n-1 < n$ relating $(a_2,x_2)$ to $(1,y)$. Thus, $x_2 \in {_{a_2}X}$ and $\alpha_{x_2}(a_2) = y$ by the induction hypothesis. But then $(a_2,x_2) \simeq^{(0)} (1,y)$. Therefore, we can reduce the sequence relating $(s,x)$ to $(1,y)$ to one of the sequences from the case $n=3$, concluding that $x \in {_sX}$ and $\alpha_s(x) = y$.
        \end{proof}
    \end{prop}
    
    \begin{corollary} \label{coro:sim1}
        Suppose that $(a_1,x_1),\dots,(a_n,x_n) \in \overline{X}$ are such that
            $$ (a_i,x_i) \simeq^{(1)} (a_{i+1},x_{i+1}), \ \forall i=1,\dots,n-1. $$
        If $x_i \in {_{a_i}X}$, for some $i=1,\dots,n$, then $(a_1,x_1) \simeq^{(0)} (a_n,x_n)$.
    
        \begin{proof}
            Suppose that $x_i \in {_{a_i}X}$. Then
                $$ (a_1,x_1) \simeq^{(1)} (a_i,x_i) \simeq^{(0)} (1,\alpha_i(x_i)) \quad\text{and}\quad (1,\alpha_i(x_i)) \simeq^{(0)} (a_i,x_i) \simeq^{(1)} (a_n,x_n). $$
            Since the relation $\simeq^{(0)}$ is contained in $\simeq^{(1)}$, it follows from Proposition \ref{prop:sim1} that ${x_1} \in {_{a_1}X}$, ${x_n} \in {_{a_n}X}$ and $\alpha_{a_1}(x_1) = \alpha_{a_i}(x_i) = \alpha_{a_n}(x_n)$. Therefore, $(a_1,x_1) \simeq^{(0)} (1,\alpha_i(x_i)) \simeq^{(0)} (a_n,x_n)$.
        \end{proof}
    \end{corollary}
    
    In the next result, we show that $\simeq^{(1)}$ coincides with a relation usually associated with the construction of globalizations for partial actions. Consider the relation $\rightarrow$ on $\overline{X}$ defined by
        $$ (s,x) \rightarrow (t,y) \iff [\exists u \in S^t \colon s = ty,\ x \in {_uX} \text{ and } \alpha_u(x) = y], $$
    or equivalently, $(tu,x) \rightarrow (t,\alpha_u(x))$, whenever $x \in {_uX}$. Let $\simeq_\ast$ be defined by $\mathcal{E}(\simeq^{(0)} \cup \rightarrow)$.
    
    \begin{prop} \label{prop:sim1-2}
        The equivalence relations $\simeq^{(1)}$ and $\simeq_\ast$ coincide.
    
        \begin{proof}
            Since $\simeq^{(1)}$ is the smallest equivalence relation containing $\simeq^{(0)} \cup \simeq_\beta^{(0)}$ and $\simeq_\ast$ is the smallest equivalence relation containing $\simeq^{(0)} \cup \to$, it is enough to prove that $\simeq_\beta^{(0)}$ is contained in $\simeq_\ast$ and $\to$ is contained in $\simeq^{(1)}$.
    
            Suppose that $(a,x) \simeq_\beta^{(0)} (b,y)$. Then there are $p \in S$ and $(a',x), (b',y) \in {_p\overline{X}}$ such that $(a',x) \simeq^{(0)} (b',y)$, $(a,x) = \beta_p(a',x)$ and $(b,y) = \beta_p(b',y)$. By Lemma \ref{lema:technical}(3), we can choose representatives with $a',b' \in S^p$ and, in this case,
                $$ x \in {_{a'}X}, \ y \in {_{b'}X}, \ \alpha_{a'}(x) = \alpha_{b'}(y), \ (a,x) = (pa',x) \text{ and } (b,y) = (pb',y). $$
            Since $(a',x) \in {_p\overline{X}}$ and $x \in {_{a'}X}$, it follows from Lemma \ref{lema:technical}(2) that $(1,\alpha_{a'}(x)) \in {_p\overline{X}}$. Hence,
                $$ (a,x) = \beta_p(a',x) \simeq_\beta^{(0)} \beta_p(1,\alpha_{a'}(x)) = (p,\alpha_{a'}(x)). $$
            That is, $(a,x) \to (p,\alpha_{a'}(x))$. Analogously, we prove $(b,y) \to (p,\alpha_{b'}(y))$, concluding that
                $$ (a,x) \simeq_\ast (p,\alpha_{a'}(x)) = (p,\alpha_{b'}(y)) \simeq_\ast (b,y). $$
    
            Conversely, suppose that $(a,x) \to (b,y)$. Then there is $u \in S^a$ such that $a = bu$, $x \in {_uX}$ and $\alpha_u(x) = y$. From \eqref{P4}, we obtain that $x \in {_uX} \subseteq {_{u^\ast}X}$. Hence, $(u,x) \in \overline{X}$ and $(u,x) \sim (1,\alpha_u(x)) = (1,y)$. Lastly, it follows from $(bu,x) = (a,x) \in \overline{X}$ and $(b,y) \in \overline{X}$ that
                $$ (u,x), (1,y) \in {_b\overline{X}}, \ (a,x) = \beta_b(u,x), \ (b,y) = \beta_b(1,y) \text{ and } (u,x) \simeq^{(0)} (1,y).  $$
            Therefore, $(a,x) \simeq_\beta^{(0)} (b,y)$, concluding the proof.
        \end{proof}
    \end{prop}

    In the remainder of this section, we show that every partial action in which disruptive elements act globally is globalizable. More precisely, we say that \emph{disruptive elements act globally} on a partial action $(X;\alpha)$ if
        $$ {_dX} = {_{d^\ast}X}, \quad \forall d \in \mathfrak{D}(S) = \{ s \in S \colon \exists t \in S^s \text{ such that } {^sS} \subsetneq {^{st}S} \}. $$
    Note that this condition is automatically satisfied by every partial action of $(S,\ast)$ whenever $\mathfrak{D}(S) \subseteq S^\ast$. In particular, if $S$ is a semigroupoid, then $\mathfrak{D}(S) = \emptyset \subseteq S^\ast$ always holds.

    \begin{lemma} \label{lema:suficient}
        Suppose that disruptive elements act globally on $(X;\alpha)$. If $(s,x) \simeq^{(1)} (t,y)$ and $s,t \in S^p$, then $(s,x) \in {_p\overline{X}}$ if and only if $(t,y) \in {_p\overline{X}}$.
    
        \begin{proof}
            First, we note that if $(s,x) \simeq^{(0)} (t,y)$ and $s,t \in S^p$, then $(s,x) \in {_p\overline{X}}$ if and only if $(t,y) \in {_p\overline{X}}$. In fact, in this case we have $x \in {_sX}$, $y \in {_tX}$ and $\alpha_s(x) = \alpha_t(y)$. Therefore, it follows from Lemma \ref{lema:technical}(2) that
                $$ (s,x) \in {_p\overline{X}} \iff (1,\alpha_s(x)) = (1,\alpha_t(y)) \in {_p\overline{X}} \iff (t,y) \in {_p\overline{X}}. $$
    
            Now, suppose that $(s,x) \simeq^{(1)} (t,y)$. By Proposition \ref{prop:sim1-2}, the equivalence relations $\simeq^{(1)}$ and $\simeq_\ast$ coincide. Therefore, there is a minimal sequence $(a_1,x_1),\dots,(a_n,x_n) \in \overline{X}$ such that $(s,x) = (a_1,x_1)$, $(a_n,x_n) = (t,y)$ and, for each $i=1,\dots,n-1$,
                $$ (a_i,x_i) \simeq^{(0)} (a_{i+1},x_{i+1}), \quad (a_i,x_i) \to (a_{i+1},x_{i+1}) \quad\text{or}\quad (a_{i+1},x_{i+1}) \to (a_i,x_i). $$
            Since the sequence is minimal, we have $a_i \neq 1$. Furthermore, if $x_i \in {_{a_i}X}$, for some $i=1,\dots,n$, then $(s,x) \simeq^{(0)} (t,y)$ by Corollary \ref{coro:sim1} and, in this case, the result follows from the previous observation. Therefore, we can assume that none of the relations in the sequence is given by $\simeq^{(0)}$. We proceed the proof with induction in $n \geq 1$.
    
            If $n=1$, then $(s,x) = (t,y)$ and the result is trivial. If $n=2$, then $(s,x) \to (t,y)$ or $(t,y) \to (s,x)$. Assume first that $(s,x) \to (t,y)$. Then exists $u \in S$ such that $s = tu$, $x \in {_uX}$ and $\alpha_u(x) = y$. From \eqref{P4}, we also have $x \in {_{u^\ast}X}$. Since $pt$ is defined, we conclude that
            \begin{align*}
                (s,x) = (tu,x) \in {_p\overline{X}} &\iff x \in {_{(p(tu))^\ast}X} \\
                &\iff x \in {_{((pt)u)^\ast}X} & \eqref{sc2} \\
                &\iff (u,x) \in {_{pt}\overline{X}} \\
                &\iff (1,y) = (1,\alpha_u(x)) \in {_{pt}\overline{X}} & \ref{lema:technical}(2) \\
                &\iff y \in {_{(pt)^\ast}X} \\
                &\iff (t,y) \in {_p\overline{X}}.
            \end{align*}
            On the other hand, assume that $(t,y) \to (s,x)$. Then exists $v \in S$ such that $t = sv$, $y \in {_vX}$ and $\alpha_v(y) = x$. Again, we have $y \in {_{v^\ast}X}$ and $ps$ is defined. Hence,
                $$ (t,y) = (sv,x) \in {_p\overline{X}} \overset{\eqref{sc2}}{\iff} (v,x) \in {_{ps}\overline{X}} \overset{\ref{lema:technical}(2)}{\iff} (1,x) \in {_{ps}\overline{X}} \iff (s,x) \in {_p\overline{X}}. $$
            This concludes the case $n=2$.
    
            Let $n \geq 3$ and suppose that, if $(s',x') \simeq^{(1)} (t',y')$ are related by a sequence of length $m < n$ and $s',t' \in S^p$, then $(s',x') \in {_p\overline{X}}$ if and only if $(t',y') \in {_p\overline{X}}$. Since $n \geq 3$, the sequence relating $(s,x)$ and $(t,y)$ begins with $(s,x) \to (a_2,x_2)$ or $(a_2,x_2) \to (s,x)$. Suppose that $(a_2,x_2) \to (s,x)$. Then there is $v \in S$ such that $a_2 = sv$, $x_2 \in {_vX}$ and $\alpha_v(x_2) = x$. Since $sv$ and $ps$ are defined, it follows from \eqref{sc2} that $pa_2 = p(sv)$ is defined. In this case, $(s,x) \simeq^{(1)} (a_2,x_2)$ and $(a_2,x_2) \simeq^{(1)} (t,y)$ are related by sequences of length $2$ and $n-1$, respectively, and $s,a_2,t \in S^p$. Therefore, it follows from the case $n=2$ and the induction hypothesis that
                $$ (s,x) \in {_p\overline{X}} \iff (a_2,x_2) \in {_p\overline{X}} \iff (t,y) \in {_p\overline{X}}. $$
            Lastly, suppose that $(s,x) \to (a_2,x_2)$. Then there is $u \in S$ such that $s = a_2u$, $x \in {_uX}$ and $\alpha_u(x) = x_2$. We split the proof in two cases:
            \begin{itemize}
                \item If $a_2 \notin \mathfrak{D}(S)$. Then $s \in S^p$ implies $p \in {^sS} = {^{a_2u}S} = {^{a_2}S}$. Thus, $a_2 \in S^p$ and, as in the previous case, it follows from $n=2$ and the induction hypothesis that
                    $$ (s,x) \in {_p\overline{X}} \iff (a_2,x_2) \in {_p\overline{X}} \iff (t,y) \in {_p\overline{X}}. $$
    
                \item If $a_2 \in \mathfrak{D}(S)$. Since disruptive elements act globally, we have $x_2 \in {_{a_2^\ast}X} = {_{a_2}X}$. Therefore, $(s,x) \simeq^{(1)} (a_2,x_2) \simeq^{(1)} (t,y)$ where $x_2 \in {_{a_2}X}$. From Corollary \ref{coro:sim1}, we obtain $(s,x) \simeq^{(0)} (t,y)$. Hence, $(s,x) \in {_p\overline{X}}$ if and only if $(t,y) \in {_p\overline{X}}$.
            \end{itemize}
            This concludes the proof.
        \end{proof}
    \end{lemma}
    
    \begin{prop} \label{prop:suficient}
        Suppose that disruptive elements act globally on $(X;\alpha)$. Then, the equivalence relations $\simeq$ and $\simeq^{(1)}$ coincide.
    
        \begin{proof}
            It is enough to prove that $\simeq_\beta^{(1)}$ is contained in $\simeq^{(1)}$. Suppose that $(s,x) \simeq_\beta^{(1)} (t,y)$. Then there are $p \in S$ and $(a,x), (b,y) \in {_p\overline{X}}$ such that $(a,x) \simeq^{(1)} (b,y)$, $(s,x) = \beta_p(a,x)$ and $(t,y) = \beta_p(t,y)$. By Lemma \ref{lema:technical}(3), we can choose representatives with $a,b \in S^p$. In this case, it follows from Proposition \ref{prop:sim1-2} that there are $(a_1,x_1),\dots,(a_n,x_n) \in \overline{X}$ such that $(a,x) = (a_1,x_1)$, $(a_n,x_n) = (b,y)$ and, for each $i=1,\dots,n-1$,
                $$ (a_i,x_i) \simeq^{(0)} (a_{i+1},x_{i+1}), \quad (a_i,x_i) \to (a_{i+1},x_{i+1}) \quad\text{or}\quad (a_{i+1},x_{i+1}) \to (a_i,x_i). $$
            Furthermore, if $x_i \in {_{a_i}X}$, for some $i=1,\dots,n$, then $(a,x) \simeq^{(0)} (b,y)$ by Corollary \ref{coro:sim1}. In this case, we have $(s,x) = \beta_p(a,x) \simeq_\beta^{(0)} \beta_p(b,y) = (t,y)$, and thus, $(s,x) \simeq^{(1)} (b,y)$. Therefore, we can assume that none of the relations is given by $\simeq^{(0)}$. We proceed the proof with induction in $n \geq 1$.
    
            If $n=1$, then $(a,x) = (b,y)$. Thus, $(s,x) = (t,y)$ and there is nothing to prove. If $n=2$, then $(a,x) \to (b,y)$ or $(b,y) \to (a,x)$. Suppose that $(a,x) \to (b,y)$. Then there is $u \in S$ such that $a = bu$, $x \in {_uX}$ and $\alpha_u(x) = y$. Since $b \in S^p$, we obtain
            \begin{align*}
                (s,x) = \beta_p(a,x) &= (pa,x) = (p(bu),x) \\&\overset{\eqref{sc2}}{=} ((pb)u,x) \to (pb,\alpha_u(x)) = (pb,y) = \beta_p(b,y) = (t,y).
            \end{align*}
            Hence, $(s,x) \simeq^{(1)} (b,y)$. Since $a \in S^p$, the case $(b,y) \to (a,x)$ is analogous. This concludes the case $n=2$.
    
            Let $n \geq 3$ and suppose that if $(a',x') \simeq^{(1)} (b',y')$ are related by a sequence of length $m < n$ and $(a',x'), (b',y') \in {_p\overline{X}}$, then $\beta_p(a',x') \simeq^{(1)} \beta_p(b',y')$. Since $n \geq 3$, we have $(a,x) \to (a_2,x_2)$ or $(x_2,a_2) \to (a,x)$. Suppose that $(x_2,a_2) \to (a,x)$. Then
                $$ \exists v \in S \colon a_2 = av, \ x_2 \in {_vX} \text{ and } \alpha_v(x_2) = x. $$
            Since $pa$ and $av$ are defined, it follows from \eqref{sc2} that $pa_2 = p(av)$ is defined. By Lemma \ref{lema:suficient}, $(a,x) \in {_p\overline{X}}$ implies $(a_2,x_2) \in {_p\overline{X}}$. In this case, $(a,x) \simeq^{(1)} (a_2,x_2)$ and $(a_2,x_2) \simeq^{(1)} (b,y)$ are related by sequences of length $2$ and $n-1$, respectively, and $(a,x), (a_2,x_2), (b,y) \in {_p\overline{X}}$. Therefore, it follows from the case $n=2$ and the induction hypothesis that
                $$ (s,x) = \beta_p(a,x) \simeq^{(1)} \beta_p(a_2,x_2) \simeq^{(1)} \beta_p(b,y) = (t,y). $$
            That is, $(s,x) \simeq^{(1)} (t,y)$. On the other hand, suppose that $(a,x) \to (a_2,x_2)$. Then there is $u \in S$ such that $a = a_2u$, $x \in {_uX}$ and $\alpha_u(x) = x_2$. We split the proof in two cases:
            \begin{itemize}
                \item If $a_2 \notin \mathfrak{D}(S)$. Then $a \in S^p$ implies $p \in {^aS} = {^{a_2u}S} = {^{a_2}S}$. But then $a_2 \in S^p$. From Lemma \ref{lema:suficient} and $(a,x) \in {_p\overline{X}}$, we obtain $(a_2,x_2) \in {_p\overline{X}}$. Therefore, it follows from the case $n=2$ and the induction hypothesis that
                    $$ (s,x) = \beta_p(a,x) \simeq^{(1)} \beta_p(a_2,x_2) \simeq^{(1)} \beta_p(b,y) = (t,y). $$
    
                \item If $a_2 \in \mathfrak{D}(S)$. Since disruptive elements act globally, we have $x_2 \in {_{a_2^\ast}X} = {_{a_2}X}$. Therefore, $(a,x) \simeq^{(1)} (a_2,x_2) \simeq^{(1)} (b,y)$ where $x_2 \in {_{a_2}X}$. From Corollary \ref{coro:sim1}, we obtain $(a,x) \simeq^{(0)} (b,y)$. Hence,
                    $$ (s,x) = \beta_p(a,x) \simeq_\beta^{(0)} \beta_p(b,y) = (t,y) \implies (s,x) \simeq^{(1)} (t,y). $$
            \end{itemize}
            This concludes the proof.
        \end{proof}
    \end{prop}
    
    \begin{theorem} \label{teo:suficient}
        Let $(X;\alpha)$ be a partial action of a restriction semiconstellation $(S,\ast)$. If disruptive elements act globally in $(X;\alpha)$, then $(FX;\beta)$ is a global action of $(S,\ast)$ and a globalization for $(X;\alpha)$.
    
        \begin{proof}
            Since disruptive elements act globally in $(X;\alpha)$ it follows from Proposition \ref{prop:suficient} that the equivalence relation $\simeq$ coincides with $\simeq^{(1)}$. By Proposition \ref{prop:sim1}, we obtain that $(s,x) \simeq (1,y)$ implies $x \in {_sX}$ and $\alpha_s(x) = y$. Therefore, we conclude from Proposition \ref{prop:construction-2} that $\iota \colon (X;\alpha) \to (FX;\beta)$ is an embedding.
            
            Now, we prove that $(FX;\beta)$ is a global action of $(S,\ast$). By Proposition \ref{prop:construction}, it remains to prove that, if $st$ is defined, then $\beta_t^{-1}(FX_t \cap {_sFX}) \subseteq {_{st}FX}$. In fact, we have
                $$ a \otimes x \in \beta_t^{-1}(FX_t \cap {_sFX}) \iff (a,x) \simeq (b,y) \in {_t\overline{X}} \colon \beta_p(b,y) \simeq (c,z) \in {_s\overline{X}}. $$
            By Lemma \ref{lema:technical}(3), we can choose representatives with $b \in S^t$ and $c \in S^s$. In this case, we have $(tb,y) = \beta_t(b,y) \simeq (c,z) \in {_s\overline{X}}$. Since $st$ and $tb$ are defined, it follows from \eqref{sc2} that $s(tb)$ is defined. Therefore, by Lemma \ref{lema:suficient}, $(c,z) \in {_s\overline{X}}$ implies that $(tb,y) \in {_s\overline{X}}$. But
                $$ (tb,y) \in {_s\overline{X}} \iff y \in {_{(s(tb))^\ast}X} \overset{\eqref{sc2}}{=} {_{((st)b)^\ast}X} \iff (b,y) \in {_{st}\overline{X}}. $$
            Hence, $(a,x) \simeq (b,y) \in {_{st}\overline{X}}$, concluding that $a \otimes x \in {_{st}FX}$.
        \end{proof}
    \end{theorem}
    
    Recall that a semiconstellation $S$ is a semigroupoid if and only if $\mathfrak{D}(S) = \emptyset$. The following result is straightforward from Theorem \ref{teo:suficient}.
    
    \begin{corollary} \label{coro:suficient}
        If $(S,\ast)$ is a restriction semiconstellation such that $\mathfrak{D}(S) \subseteq S^\ast$, then every partial action of $(S,\ast)$ is globalizable . In particular, every partial action of a restriction semigroupoid is globalizable.
    \end{corollary}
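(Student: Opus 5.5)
This is a direct application of Theorem \ref{teo:suficient}, so the plan is to check that its hypothesis holds under the assumption $\mathfrak{D}(S) \subseteq S^\ast$. Fix a partial action $(X;\alpha)$ of $(S,\ast)$. We must show that disruptive elements act globally, that is, ${_dX} = {_{d^\ast}X}$ for every $d \in \mathfrak{D}(S)$.

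Let $d \in \mathfrak{D}(S)$. By assumption $d \in S^\ast$, and Lemma \ref{lema:restriction}(2) gives $d^\ast = d$. Hence ${_{d^\ast}X} = {_dX}$ trivially, and disruptive elements act globally on $(X;\alpha)$. Theorem \ref{teo:suficient} then says that $(FX;\beta)$ is a global action and that $\iota\colon (X;\alpha)\to(FX;\beta)$ is an embedding. So $(FX;\beta)$ is a globalization, and $(X;\alpha)$ is globalizable.

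For the second statement, let $(S,\ast)$ be a restriction semigroupoid. Its underlying semiconstellation is a semigroupoid, so Corollary \ref{coro:semiconst} gives $\mathfrak{D}(S)=\emptyset\subseteq S^\ast$. The first part then applies directly.

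There is no real obstacle here. The only point to check is the identity $d^\ast=d$ for projections, and Lemma \ref{lema:restriction}(2) already supplies it; everything else is contained in Theorem \ref{teo:suficient}.
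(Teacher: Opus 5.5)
Your proposal is correct and matches the paper's argument. Every $d\in\mathfrak{D}(S)\subseteq S^\ast$ satisfies $d^\ast=d$ by Lemma~\ref{lema:restriction}(2), so disruptive elements act globally and Theorem~\ref{teo:suficient} yields the globalization; for restriction semigroupoids, Corollary~\ref{coro:semiconst} gives $\mathfrak{D}(S)=\emptyset\subseteq S^\ast$.
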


    Corollary \ref{coro:suficient} generalizes the globalization theorems for partial actions of inverse semigroupoids \cite[Theorem 3.16]{demeneghi2025}, restriction semigroups \cite[Theorem 6.7]{gould2009partial}, and categories \cite[Theorem 4]{nystedt2017}. Further particular cases can be found in the references cited above.
    
    \begin{obs}
        In Section 6 of \cite{gould2009partial}, in the context of the globalization theorem for partial actions of restriction semigroups on sets, the relation $\simeq$ is defined as the smallest equivalence relation containing the relation
            $$ (s,x) \sim (t,y) \iff \begin{matrix}
                \exists u \in S \colon s = tu, x \in {_uX}, \alpha_u(x) = y \text{ and} \\
                \forall e \in S^\ast, [x \in {_{(e(tu))^\ast}X} \iff \alpha_u(x) \in {_{(et)^\ast}X}].
            \end{matrix} $$
        The first line on the right side is precisely the definition of the relation $\to$, while the second is equivalent to the conclusion of Lemma \ref{lema:suficient}. Since every semigroup satisfies $\mathfrak{D}(S)=\emptyset$, Lemma \ref{lema:suficient} implies that the second condition is automatically satisfied. Hence, it is superfluous in this setting.
    \end{obs}
    
    \subsection{Reduced case and examples} We prove that, if $(S,\ast)$ is reduced, then $(FX;\beta)$ is a global action, characterizing when $(X;\alpha)$ is globalizable. We also provide examples justifying the negative response to the globalization problem.\\
    
    In Theorem \ref{teo:suficient}, we proved that if disruptive elements act globally in $(X;\alpha)$, then $(FX;\beta)$ is a global action. More precisely, the assumption on the action of disruptive elements allows us to prove that $\simeq = \simeq^{(1)}$. In this case, Lemma \ref{lema:suficient} implies that, whenever $st$ is defined and $\beta_t(b,x) \simeq (c,z) \in {_s\overline{X}}$, we have $\beta_t(b,x) \in {_s\overline{X}}$. Consequently, $\beta_t^{-1}(FX_t \cap {_sX}) \subseteq {_{st}FX}$. By Proposition \ref{prop:construction}, this was the final condition needed to conclude that $(FX;\beta)$ is a global action. We now show that when $(S,\ast)$ is a reduced semiconstellation, the same conclusion holds without any assumption on the action of the disruptive elements.
    
    \begin{theorem} \label{teo:reduced}
        Let $(X;\alpha)$ be a partial action of a reduced semiconstellation $(S,\ast)$. Then the pair $(\iota,(FX;\beta))$ is a reflector for $(X;\alpha)$ in the category of global actions $\mathcal{A}(S,\ast)$.
    
        \begin{proof}
            By Theorem \ref{teo:construction}, the pair $(\iota,(FX;\beta))$ is a reflector for $(X;\alpha)$ in the category $\mathcal{A}_a(S,\ast)$, which contains the category $\mathcal{A}(S,\ast)$. Therefore, it is enough to show that $(FX;\beta) \in \mathcal{A}(S,\ast)$. Since $(S,\ast)$ is reduced, if $st$ is defined, then $(st)^\ast = t^\ast$ by Proposition \ref{prop:char-reduced}(5). Therefore,
                $$ {_tFX} \overset{\eqref{P5}}{=} {_{t^\ast}FX} \overset{\ref{prop:char-reduced}(5)}{=} {_{(st)^\ast}FX} \overset{\eqref{P5}}{=} {_{st}FX} \overset{\ref{prop:construction}(3)}{\subseteq} \beta_t^{-1}(FX_t \cap {_sFX}) \subseteq {_tFX}. $$
        That is, $\beta_t^{-1}(FX_t \cap {_sFX}) = {_{st}FX} = {_{st}FX} \cap {_tFX}$. From Proposition \ref{prop:construction} and the last equality, we conclude that $(FX;\beta)$ is a global action of $(S,\ast)$.
        \end{proof}
    \end{theorem}

    The short proof of Theorem \ref{teo:reduced} hides the fact that, when $(S,\ast)$ is reduced, the sets ${_s\overline{X}}$ have a simpler description than in the setting of Lemma \ref{lema:suficient}. Indeed, if $(s,x)\in\overline{X}$, with $s\in S$, then
    $$ (s,x) \in {_p\overline{X}} \iff s \in S^p \text{ and } x \in {_{(ps)^\ast}X} \overset{\ref{prop:char-reduced}(5)}{\iff} s \in S^p \text{ and } x \in {_{s^\ast}X} \iff s \in S^p. $$
    
    Thus, the condition $s\in S^p$ alone is sufficient to ensure that $(s,x)\in {_p\overline{X}}$. In contrast, in Lemma \ref{lema:suficient}, this conclusion requires the additional assumption that disruptive elements act globally, together with the existence of some $(t,y)\in {_p\overline{X}}$ such that $(s,x) \simeq^{(1)} (t,y)$ and $t \in S^p$.\\
    
    The next result is an immediate consequence of Proposition \ref{prop:construction-2} and Theorem \ref{teo:reduced}.
    
    \begin{corollary} \label{coro:reduced}
        Let $(X;\alpha)$ be a partial action of a reduced semiconstellation $(S,\ast)$. Then, the following assertions are equivalent:
        \begin{enumerate}
            \item $(X;\alpha)$ is globalizable.
            \item The $S$-morphism $\iota \colon (X;\alpha) \to (FX;\beta)$ is an embedding.
            \item If $(s,x) \simeq (1,y)$, then $x \in {_sX}$ and $\alpha_s(x) = y$.
        \end{enumerate}
    \end{corollary}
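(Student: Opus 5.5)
The three conditions are tied together by Theorem \ref{teo:reduced} and Proposition \ref{prop:construction-2}. Since $(S,\ast)$ is reduced, Theorem \ref{teo:reduced} gives that $(FX;\beta)$ is a global action. Moreover, $(\iota,(FX;\beta))$ is a reflector both in $\mathcal{A}(S,\ast)$ and in $\mathcal{A}_a(S,\ast)$. Proposition \ref{prop:construction-2} already shows that (2) and (3) are equivalent: its proof of this equivalence uses only the construction of $\simeq$ and the axioms \eqref{P1}--\eqref{P4} of $(X;\alpha)$. So it remains to prove that (1) is equivalent to (2).

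For (2) $\Rightarrow$ (1), observe that if $\iota$ is an embedding, that is, injective and full, then $(FX;\beta)$ is a global action admitting an embedding of $(X;\alpha)$. By definition, this makes $(FX;\beta)$ a globalization of $(X;\alpha)$, so $(X;\alpha)$ is globalizable. Equivalently, Proposition \ref{prop:universal}(3) identifies $(X;\alpha)$ with the restriction of $(FX;\beta)$ to $\iota(X)$.

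For (1) $\Rightarrow$ (2), suppose there is an embedding $\varphi\colon (X;\alpha)\to (Y;\gamma)$ with $(Y;\gamma)$ a global action. Every global action is almost global, so $(Y;\gamma)\in\mathcal{A}_a(S,\ast)$, and alternatively $(\iota,(FX;\beta))$ is a reflector in $\mathcal{A}(S,\ast)$ itself. In either case Proposition \ref{prop:universal-2}, parts (1) and (2), transfers injectivity and fullness from $\varphi$ to $\iota$, so $\iota$ is an embedding. This is also precisely the implication (1) $\Rightarrow$ (2) of Proposition \ref{prop:construction-2}.

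The only point that needs care is that globalizability in (1) refers to \emph{global} actions, while Proposition \ref{prop:construction-2} is phrased in terms of almost global actions. This is handled in both directions. The direction (1) $\Rightarrow$ (2) is immediate because $\mathcal{A}(S,\ast)\subseteq\mathcal{A}_a(S,\ast)$. The converse uses Theorem \ref{teo:reduced}, which guarantees that in the reduced case the reflector $(FX;\beta)$ is genuinely global. No further computation is needed.
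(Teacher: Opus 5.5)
Your proposal is correct and follows the paper's route. The paper states this corollary as an immediate consequence of Proposition \ref{prop:construction-2}, which gives (2)$\Leftrightarrow$(3) and transfers embeddings into almost global (hence global) actions to $\iota$. It combines this with Theorem \ref{teo:reduced}, which makes $(FX;\beta)$ a genuine global action, so (2) yields a globalization.
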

    
    \begin{obs}
        In this section, we have fixed $(X;\alpha)$ a partial action. It is interesting to note that many results hold for more general $S$-algebras, especially if $(S,\ast)$ is reduced.
        \begin{enumerate}
            \item The construction of $(FX;\beta)$ can be done from any $S$-algebra. In this case, $(FX;\beta)$ always satisfy \eqref{P5} and every $S$-morphism $(X;\alpha) \to (Y;\gamma)$, where $(Y;\gamma)$ is an almost global action, factors uniquely through the function $\iota \colon X \to FX$.\\
            \item If $(X;\alpha)$ is a $S$-algebra such that the function $\alpha \colon (S,\ast) \to PT(X)$ is order preserving, then the $S$-algebra $(FX;\beta)$ is an almost global action. In this case, the pair $(\iota,(FX;\beta))$ is a reflector for $(X;\alpha)$ in $\mathcal{A}_a(S,\ast)$ if and only if $\iota \colon (X;\alpha) \to (FX;\beta)$ is a $S$-morphism if and only if the $S$-algebra $(X;\alpha)$ satisfies \eqref{P4}.\\
            \item Suppose that $(S,\ast)$ is reduced. Then, by Proposition \ref{prop:char-reduced}(3), we have $s \leq t$ if and only if $s = t$. In this case, any function $\alpha \colon (S,\ast) \to PT(X)$ is order preserving. Therefore, $(FX;\beta)$ is a global action for every $S$-algebra $(S,\ast)$ satisfying \eqref{P3}, and is a reflector for $(X;\alpha)$ in $\mathcal{A}(S,\ast)$ if and only if the $S$-algebra $(X;\alpha)$ also satisfies \eqref{P4}.
        \end{enumerate}
    \end{obs}
    
    We end this section presenting examples to discuss the globalization problem of partial actions of restriction semiconstellations. In the following examples, we use the reduced semiconstellation $(T_2^1,\ast)$ from Example \ref{exe:restriction}. Recall that
        $$ T_2^1 = \{ s_1^1, s_2^1, s_3^1, s^\ast, d_1, d_1^\ast, d_2, d_2^\ast \}. $$
    For simplicity, we denote $s_i$ instead of $s_i^1$. Hence, composition in $T_2^1$ is given by
        $$ d_1s_1 = s_2, \quad d_2s_2 = s_3, \quad d_id_i^\ast = d_i, \quad d_i^\ast s_i = s_i, \ i=1,2, \quad\text{and}\quad s_js^\ast = s_j, \  j=1,2,3. $$
    The reduced restriction structure is uniquely defined by $d_j \mapsto d_j^\ast$ and $s_i \mapsto s^\ast$, and the set of disruptive elements of $T_2^1$ is $\mathfrak{D}(T_2^1) = \{d_1\}$.\\
    
    The first example shows a partial action $(X;\alpha)$ such that $\iota$ is an embedding, but the relation $\simeq$ is strictly larger than $\simeq^{(1)}$. In particular, it follows from Proposition \ref{prop:suficient} that disruptive elements do not act globally in $(X;\alpha)$. Therefore, this condition is sufficient, but not necessary, to the existence of a globalization.
    
    \begin{exe} \label{exe:reduced-1}
        Let $X = \{x,y\}$ and consider the following data:
        \begin{gather*}
            {_{d_1}X} = {_{d_2}X} = {_{d_2^\ast}X} = {_{s_2}X} = {_{s_3}X} = \emptyset, \quad {_{d_1^\ast}X} = \{x\}, \quad {_{s_1}X} = {_{s^\ast}X} = \{x,y\}, \\
            \alpha_{s_1}(x) = \alpha_{s_1}(y) = x, \quad \alpha_{d_1^\ast} = id_{\{x\}} \quad\text{and}\quad \alpha_{s^\ast} = id_X.
        \end{gather*}
        Then $(X;\alpha)$ is a partial action of $(T_2^1,\ast)$. The set $\overline{X}$ and the domains ${_t\overline{X}}$ are given by
        \begin{align*}
            \overline{X} = \left\{\begin{matrix}
                (1,x), & (s^\ast,x), & (s_1,x), & (s_2,x), & (s_3,x), & (d_1^\ast,x), & (d_1,x), \\
                (1,y), & (s^\ast,y), & (s_1,y), & (s_2,y), & (s_3,y) &  &  \\
            \end{matrix}\right\}
        \end{align*}
        and
        \begin{align*}
            {_{s_1}\overline{X}} = {_{s_2}\overline{X}} = {_{s_3}\overline{X}} = {_{s^\ast}\overline{X}} &= \{ (1,x), (1,y), (s^\ast,x), (s^\ast,y) \}, \\
            {_{d_1}\overline{X}} = {_{d_1^\ast}\overline{X}} &= \{ (1,x), (d_1^\ast,x), (s_1,x), (s_1,y) \}, \\
            {_{d_2}\overline{X}} = {_{d_2^\ast}\overline{X}} &= \{ (s_2,x), (s_2,y) \}.
        \end{align*}
        In the following \textit{graph of relations} in $\overline{X}$, we draw an edge between $(a,z)$ and $(a',z')$ labeled with $n$ if and only if $(a,z) \simeq^{(n)} (a',z')$ where $n$ is the smallest positive integer possible.
        \begin{center}
            \begin{tikzpicture}[yscale=1.25]
                \tikzstyle{every path} = [draw];
                
                \node (1x) at (0,0) {$(1,x)$};
                \node (s*x) at (2,0) {$(s^\ast,x)$};
                \node (s1x) at (4,0) {$(s_1,x)$};
                \node (d1*x) at (6,0) {$(d_1^\ast,x)$};
                \node (s1y) at (8,0) {$(s_1,y)$};
                \node (s3x) at (10,0) {$(s_3,x)$};
                \node (1y) at (0,-1) {$(1,y)$};
                \node (s*y) at (2,-1) {$(s^\ast,y)$};
                \node (s2x) at (4,-1) {$(s_2,x)$};
                \node (d1x) at (6,-1) {$(d_1,x)$};
                \node (s2y) at (8,-1) {$(s_2,y)$};
                \node (s3y) at (10,-1) {$(s_3,y)$};
    
                \path (1x) to node[above]{0} (s*x) to node[above]{0} (s1x) to node[above]{0} (d1*x) to node[above]{0} (s1y);
                \path (1y) to node[above]{0} (s*y);
                \path (s2x) to node[above]{1} (d1x) to node[above]{1} (s2y);
                \path (s3x) to node[left]{2} (s3y);
            \end{tikzpicture}
        \end{center}
        Note that $(s,z)$ is connected to $(1,x)$ or $(1,y)$ if and only if the sequence of relations is given by $\simeq^{(0)}$. Therefore, the condition in Corollary \ref{coro:reduced}(3) holds, concluding that the partial action $(X;\alpha)$ is globalizable. Since the highest label on the graph is $2$, the relation $\simeq$ coincides with $\simeq^{(2)}$ but not with $\simeq^{(1)}$.
    \end{exe}
    
    The second example shows a partial action $(X;\alpha)$ such that $\iota$ is full but not injective. Therefore, $\iota$ is not an embedding and $(X;\alpha)$ is not globalizable.
    
    \begin{exe} \label{exe:reduced-2}
        Let $X = \{x,y\}$ and consider the following data:
        \begin{gather*}
            {_{d_1}X} = {_{d_2}X} = {_{d_2^\ast}X} = {_{s_2}X} = \emptyset, \quad {_{d_1^\ast}X} = \{x\}, \quad {_{s_1}X} = {_{s_3}X} = {_{s^\ast}X} = \{x,y\}, \\
            \alpha_{s_1}(x) = \alpha_{s_1}(y) = x, \quad \alpha_{s_3}(x) = x, \quad \alpha_{s_3}(y) = y, \quad \alpha_{d_1^\ast} = id_{\{x\}} \quad\text{and}\quad \alpha_{s^\ast} = id_X.
        \end{gather*}
        Then $(X;\alpha)$ is a partial action of $(T_2^1,\ast)$. The sets $\overline{X}$ and ${_t\overline{X}}$ coincide with the ones from Example \ref{exe:reduced-1}. In this case, the graph of relations in $\overline{X}$ is
        \begin{center}
            \begin{tikzpicture}[yscale=1.25]
                \tikzstyle{every path} = [draw];
                
                \node (1x) at (0,0) {$(1,x)$};
                \node (s*x) at (2,0) {$(s^\ast,x)$};
                \node (s3x) at (4,0) {$(s_3,x)$};
                \node (s1x) at (6,0) {$(s_1,x)$};
                \node (d1*x) at (8,0) {$(d_1^\ast,x)$};
                \node (s1y) at (10,0) {$(s_1,y)$};
                \node (1y) at (0,-1) {$(1,y)$};
                \node (s*y) at (2,-1) {$(s^\ast,y)$};
                \node (s3y) at (4,-1) {$(s_3,y)$};
                \node (s2x) at (6,-1) {$(s_2,x)$};
                \node (d1x) at (8,-1) {$(d_1,x)$};
                \node (s2y) at (10,-1) {$(s_2,y)$};
    
                \path (1x) to node[above]{0} (s*x) to node[above]{0} (s3x) to node[above]{0} (s1x) to node[above]{0} (d1*x) to node[above]{0} (s1y);
                \path (1y) to node[above]{0} (s*y) to node[above]{0} (s3y);
                \path (s2x) to node[above]{1} (d1x) to node[above]{1} (s2y);
                \path (s3x) to node[left]{2} (s3y);
            \end{tikzpicture}
        \end{center}
        Note that $(1,x)$ is connected to $(1,y)$. Hence, the $S$-morphism $\iota \colon (X;\alpha) \to (FX;\beta)$ is not injective. By Corollary \ref{coro:reduced}, we conclude that $(X;\alpha)$ is not globalizable. On the other hand, if $\beta_t(1,z) = (t,z)$ is connected to $(1,x)$ or $(1,y)$, then $z \in {_tX}$. Therefore, the $S$-morphism $\iota \colon (X;\alpha) \to (FX;\beta)$ is full.
    \end{exe}
    
    The third and final example presents a partial action $(X;\alpha)$ for which $\iota$ is injective but not full. Consequently, $\iota$ is not an embedding and $(X;\alpha)$ cannot be globalizable.
    
    \begin{exe} \label{exe:unipotet-3}
        Let $X = \{x,y\}$ and consider the following data:
        \begin{gather*}
            {_{d_1}X} = {_{d_2}X} = {_{d_2^\ast}X} = {_{s_2}X} = \emptyset, \quad {_{s_1}X} = {_{s^\ast}X} = \{x,y\}, \quad {_{s_3}X} = {_{d_1^\ast}X} = \{x\}, \\
            \alpha_{s_1}(x) = \alpha_{s_1}(y) = \alpha_{s_3}(x) = x, \quad \alpha_{d_1^\ast} = id_{\{x\}} \quad\text{and}\quad \alpha_{s^\ast} = id_X.
        \end{gather*}
        Then $(X;\alpha)$ is a partial action of $(T_2^1,\ast)$. The sets $\overline{X}$ and ${_t\overline{X}}$ are the same as in Example \ref{exe:reduced-1}. In this case, the graph of relations in $\overline{X}$ is
        \begin{center}
            \begin{tikzpicture}[yscale=1.25]
                \tikzstyle{every path} = [draw];
                
                \node (1x) at (0,0) {$(1,x)$};
                \node (s*x) at (2,0) {$(s^\ast,x)$};
                \node (s3x) at (4,0) {$(s_3,x)$};
                \node (s1x) at (6,0) {$(s_1,x)$};
                \node (d1*x) at (8,0) {$(d_1^\ast,x)$};
                \node (s1y) at (10,0) {$(s_1,y)$};
                \node (1y) at (0,-1) {$(1,y)$};
                \node (s*y) at (2,-1) {$(s^\ast,y)$};
                \node (s3y) at (4,-1) {$(s_3,y)$};
                \node (s2x) at (6,-1) {$(s_2,x)$};
                \node (d1x) at (8,-1) {$(d_1,x)$};
                \node (s2y) at (10,-1) {$(s_2,y)$};
    
                \path (1x) to node[above]{0} (s*x) to node[above]{0} (s3x) to node[above]{0} (s1x) to node[above]{0} (d1*x) to node[above]{0} (s1y);
                \path (1y) to node[above]{0} (s*y);
                \path (s2x) to node[above]{1} (d1x) to node[above]{1} (s2y);
                \path (s3x) to node[left]{2} (s3y);
            \end{tikzpicture}
        \end{center}
        Since $(1,x)$ and $(1,y)$ are not connected, the $S$-morphism $\iota \colon (X;\alpha) \to (FX;\beta)$ is injective. However, $\beta_{s_3}(1,y) = (s_3,y)$ is connected to $(1,x)$ but $y \notin {_{s_3}X}$. Therefore, the $S$-morphism $\iota \colon (X;\alpha) \to (FX;\beta)$ is not full. From Corollary \ref{coro:reduced}, we conclude that the partial action $(X;\alpha)$ is not globalizable.
    \end{exe}
    
    \begin{obs}
        On the previous examples:
        \begin{enumerate}
            \item We create a sequence $(s_2,x) \simeq^{(1)} (d_1,x) \simeq^{(1)} (s_2,y)$ where $d_1$ is a disruptive element. This is the situation where, in Lemma \ref{lema:suficient} and Proposition \ref{prop:suficient}, we use the hypothesis ${_dX} = {_{d^\ast}X}$. Without this additional hypothesis, we obtain $(s_3,x) \simeq^{(2)} (s_3,y)$. Then, choosing how $s_3$ acts in $x$ and $y$, we manipulate when $\iota$ is injective or full.\\
    
            \item The fact that $\simeq$ equals to $\simeq^{(2)}$ in all examples is not a coincidence. Replacing $T_2^1$ with $T_n^1$, $n \geq 2$, and extending the actions $(X;\alpha)$ by defining ${_tX} = \emptyset$, for all $t \in T_n^1 \setminus T_2^1$, we conclude that $\simeq$ equals to $\simeq^{(n)}$. Replacing $T_2^1$ with $T_\infty^1$ makes that the relation $\simeq$ does not coincide with any of the relations $\simeq^{(n)}$.
        \end{enumerate}
    \end{obs}
    
    \section{Adequate relations on semiconstellations} \label{sec:5}

    In the second half of this paper, we discuss an analogue to the globalization problem for actions of semiconstellations that are not assumed to be endowed with a restriction structure. Our approach is mainly inspired by the globalization theorem for partial actions of semigroups \cite[Remark 3.8]{kudryavtseva2023} and semigroupoids \cite[Theorem 3.7]{haag2026a}.
    
    The method in \cite{kudryavtseva2023} consists of constructing globalizations for \textit{firm} partial semigroup actions and showing that every partial monoid action is firm. Partial actions of a semigroup $S$ are then extended to partial monoid actions of $S^1$, where $S^1 = S \cup \{1\}$ is the monoid obtained by adjoining an identity element to $S$. The globalization for a partial semigroup action is subsequently recovered from the corresponding globalization for the extended partial monoid action. On the other hand, not every semigroupoid can be extended to a semigroupoid with identities. Nevertheless, \cite{haag2026a} constructs globalizations for partial actions of a semigroupoid $S$ by using the set $S^1$ together with an equivalence relation $R$ on $S$.
    
    In this section, we develop a method to construct reduced semiconstellations from semiconstellations and certain equivalence relations, such as the one used in \cite{haag2026a}, generalizing the construction of the monoid $S^1$. This method is a key component in reducing the globalization problem for actions of semiconstellations to the one of restriction semiconstellations.
    
    \subsection{Characterization} We investigate the relations associated to strong morphisms from a semiconstellation $S$ to a reduced semiconstellation $(T,\ast)$ and, given a such relation on $S$, we explicitly construct a reduced semiconstellations $(T,\ast)$ containing $S$ as a (2)-subalgebra.\\
    
    Throughout this subsection, $S$ denotes a semiconstellation. Given a function $\varphi \colon S \to T$, we consider the relation $R_\varphi = \{ (s,t) \colon \varphi(s) = \varphi(t) \}$ on $S$. Note that $R_\varphi$ is always an equivalence relation. In particular, if $(T,\ast)$ is a restriction semiconstellation, we write $R_{\varphi^\ast} = R_{\ast \circ \varphi}$.
    
    \begin{lemma} \label{lema:adequate}
        Let $\varphi \colon S \to T$ be a (2)-morphism, where $(T,\ast)$ is a restriction semiconstellation.
        \begin{enumerate}
            \item If $\varphi$ is strong, then $R_{\varphi^\ast} \subseteq \{(s,t) \colon S^s = S^t\}$.
            \item If $(T,\ast)$ is reduced, then $\mathcal{E}\{(t,st) \colon \exists st\} \subseteq R_{\varphi^\ast}$.
        \end{enumerate}
    
        \begin{proof}
            \noindent(1) By Remark \ref{obs:restriction}, we have $T^s = T^{s^\ast}$ for every $s \in T$. Thus, if $(s,t) \in R_{\varphi^\ast}$, then $\varphi(s)^\ast = \varphi(t)^\ast$, and consequently $$ T^{\varphi(s)} = T^{\varphi(s)^\ast} = T^{\varphi(t)^\ast} = T^{\varphi(t)}. $$ In particular, $\varphi(s)\varphi(r)$ is defined if and only if $\varphi(t)\varphi(r)$ is defined. Since $\varphi$ is strong, these products are defined if and only if $sr$ and $tr$, respectively, are defined. It follows that $sr$ is defined if and only if $tr$ is defined, or equivalently, $S^s = S^t$.\\
    
            \noindent(2) Suppose that $st$ is defined. Since $\varphi$ is a (2)-morphism, the composition $\varphi(s)\varphi(t)$ is defined and equals to $\varphi(st)$. Therefore, since $(T,\ast)$ is reduced, we obtain that
                $$ \varphi(st)^\ast = (\varphi(s)\varphi(t))^\ast \overset{\ref{prop:char-reduced}(5)}{=} \varphi(t)^\ast. $$
            That is, $(t,st) \in R_{\varphi^\ast}$. Since $R_{\varphi^\ast}$ is an equivalence relation, it follows that $\mathcal{E}\{(t,st) \colon \exists st\}$, the smallest equivalence relation containing the pairs $(t,st)$, is contained in $R_{\varphi^\ast}$.
        \end{proof}
    \end{lemma}
    
    In the sequel, we denote $R_M = \{(s,t) \colon S^s = S^t\}$ and $R_m = \mathcal{E}\{(t,st) \colon \exists st\}$.
    
    \begin{defi}
        An equivalence relation $R$ on $S$ is called \textit{adequate} if $R_m \subseteq R \subseteq R_M$.
    \end{defi}
    
    Note that $R_M$ and $R_m$ are equivalence relations on $S$ satisfying $R_m \subseteq R_M$. Therefore, by definition, both $R_m$ and $R_M$ are adequate relations on $S$. Furthermore, it follows from Lemma \ref{lema:adequate} that if $\varphi \colon S \to T$ is a strong $(2)$-morphism into a reduced semiconstellation $(T,\ast)$, then $R_{\varphi^\ast}$ is an adequate relation on $S$. The next result shows that, conversely, every adequate relation on $S$ arises in this way.
    
    \begin{prop} \label{prop:adequate}
        Let $R$ be an adequate relation on $S$. Then there exists a reduced semiconstellation $(T,\ast)$ and an injective strong (2)-morphism $\iota \colon S \to T$ such that $R_{\iota^\ast} = R$.
    
        \begin{proof}
            Since $R$ is an equivalence relation, we may consider the quotient set $S/R$. Let $S^R = S \cup S/R$. We denote elements in $S/R$ by $\overline{s}$, elements in $S$ by $s$, and a generic element of $S^R$ by $x$. Define a partial binary operation $\circ$ on $S^R$ as follows:
            \begin{itemize}
                \item $s \circ t$ is defined if and only if $st$ is defined, and in this case $s \circ t = st$.
                \item $x \circ \overline{t}$ is defined if and only if $x \in \overline{t} \cup \{\overline{t}\}$, and in this case $x \circ \overline{t} = x$.
                \item $\overline{s} \circ t$ is defined if and only if $t \in S^s$, and in this case $\overline{s} \circ t = t$.
            \end{itemize}
            We first show that $(S^R,\circ)$ is a semiconstellation. To this end, we verify axioms \eqref{sc1} and \eqref{sc2} for triples in $S^R$. There are eight possible types of triples. We give the details for one of them and briefly indicate how the remaining cases follow. Let $(s,\overline{t},r) \in S \times S/R \times S$. Then:
            \begin{enumerate}
                \item $s \circ \overline{t}$ and $\overline{t} \circ r$ are defined if and only if $s \in \overline{t}$ and $tr$ is defined.
                \item $s \circ \overline{t}$ and $(s \circ \overline{t}) \circ r = s \circ r$ are defined if and only if $s \in \overline{t}$ and $sr$ are defined.
                \item $\overline{t} \circ r$ and $s \circ (\overline{t} \circ r) = s \circ r$ are defined if and only if $tr$ and $sr$ are defined.
            \end{enumerate}
            Now, $s\in\overline{t}$ if and only if $(s,t)\in R$. Since $R$ is adequate, this implies that $S^s=S^t$. Hence, whenever $s\in\overline{t}$, the product $sr$ is defined if and only if $tr$ is defined. Therefore, (1) and (2) are equivalent, and each implies (3). Moreover, whenever these conditions hold,
                $$ (s \circ \overline{t}) \circ r = sr = s \circ (\overline{t} \circ r). $$
            The remaining cases follow directly from the defining properties of $R$ and from the semiconstellation axioms in $S$. More precisely, the cases $(s,t,r)$ and $(\overline{s},t,r)$ follow from the fact that $S$ is a semiconstellation; the case $(s,t,\overline{r})$ follows from $(t,st)\in R$ whenever $st$ is defined; the case $(s,\overline{t},\overline{r})$ follows from the fact that $R$ is an equivalence relation; the cases $(\overline{s},t,\overline{r})$ and $(\overline{s},\overline{t},\overline{r})$ are immediate; and the case $(\overline{s},\overline{t},r)$ is analogous to the case $(s,\overline{t},r)$ considered above.

            Thus, $(S^R,\circ)$ is a semiconstellation. Define $\ast\colon S^R\to S^R$ by $s^\ast = \overline{s}$ and $\overline{s}^\ast = \overline{s}$. For every $x\in S^R$, the product $x\circ x^\ast$ is defined and $x^\ast$ is an identity of $S^R$. Hence, by Proposition \ref{prop:reduced}, $(S^R,\ast)$ is a reduced semiconstellation. Finally, let $\iota\colon S\to S^R$ be the inclusion map, that is, $\iota(s)=s$. It is immediate from the definition of $\circ$ that  $\iota$ is an injective strong (2)-morphism. Moreover, $$ R_{\iota^\ast} = \{ (s,t) \in S \times S \colon \iota(s)^\ast = \iota(t)^\ast \} = \{ (s,t) \in S \times S \colon \overline{s} = \overline{t} \} = R. $$
            This completes the proof.
        \end{proof}
    \end{prop}
    
    \begin{obs} \label{obs:adequate}
        In Proposition \ref{prop:adequate}, we may question when the semiconstellation $S^R$ is a semigroupoid. From the proof of the proposition, $S^R$ is a semigroupoid if and only if $S$ is a semigroupoid such that $S^s \cap S^t \neq \emptyset$ implies $S^s = S^t$, and the relation $R$ contains the relation
        \begin{align*}
            R' = \mathcal{E}\left(\{ (s,t) \colon S^s = S^t \neq \emptyset \} \cup \{ (t,st) \colon \exists st \}\right).
        \end{align*}
        The semigroupoids such that $S^s \cap S^t \neq \emptyset$ implies $S^s = S^t$ are called \textit{categorical semigroupoid}. By \cite[Theorem 2.15]{cordeiro2023etale}, categorical semigroupoids are precisely the semigroupoids $S$ that admit a compatible directed graph structure $(S,C_0,\mathbf{d},\mathbf{r})$. In this case, appending identities to each vertex of the graph, we obtain a category that contains $S$ as a (2)-subalgebra. Therefore, Proposition \ref{prop:adequate} shows that, if a semigroupoid $S$ is not categorical, then the process of appending identities to $S$ creates a reduced semiconstellation instead of a category.
    \end{obs}

    \subsection{Applications} In this subsection, we use adequate relations to obtain a representation theorem for semiconstellations and a free construction functor from a suitable category to the category of restriction semiconstellations. This functor will play a central role in the next section.\\

    Firstly, we show that semiconstellations are, up to isomorphism, precisely the (2)-subalgebras of \textit{function constellations}. More precisely, given a set $X$, we consider the set $C(X) = PT(X)$ endowed with the partial binary operation
        $$ \exists f \circ g \iff im(g) \subseteq dom(f), \text{ and in this case } f \circ g = f \star g, $$
    where $f \star g$ denotes the composition in the semigroup of partial functions $PT(X)$, and with a unary operation $\mathbf{d} \colon C(X) \to C(X)$, given by $\mathbf{d}(f) = id_{dom(f)}$, for all $f \in C(X)$. The triple $(C(X),\circ,\mathbf{d})$ is called a \textit{function constellation}.

    \begin{theorem} \label{coro:adequate}
        Every semiconstellation is isomorphic to a (2)-subalgebra of a function constellation. Conversely, every (2)-subalgebra of a function constellation is a semiconstellation.
    
        \begin{proof}
            Let $S$ be a semiconstellation and fix an adequate relation $R$ on $S$. For instance, take $R = R_M$. By Proposition \ref{prop:adequate}, the function $\iota \colon S \to S^R$ is an injective strong (2)-morphism. Since $(S^R,\ast)$ is a reduced semiconstellation, we obtain from Proposition \ref{prop:reduced} that $(S^R,\ast)$ is also a constellation. Therefore, \cite[Proposition 2.6]{gould2009restriction} shows that the function $\lambda \colon S^R \to C(S^R)$, given by
                $$ x \mapsto \lambda_x \colon (S^R)^x \to S^R, \quad \lambda_x(y) = xy, \quad \forall y \in (S^R)^x, $$
            is a strong (2,1)-morphism. We claim that, in this case, the function $\lambda$ is also injective. In fact, if $x,y \in S^R$ are such that $\lambda_x = \lambda_y$, then $x^\ast \in dom(\lambda_x)$. Hence,
                $$ x \overset{\eqref{r1}}{=} x \circ x^\ast = \lambda_x(x^\ast) = \lambda_y(x^\ast) = y \circ x^\ast \overset{\ref{prop:char-reduced}(2)}{=} y. $$
            Since both $\iota \colon S \to S^R$ and $\lambda \colon S^R \to C(S^R)$ are injective strong (2)-morphisms, the composition $\lambda \circ \iota \colon S \to C(S^R)$ is an injective strong (2)-morphism. From Proposition \ref{prop:universal-2}, we conclude that $S$ is isomorphic to a (2)-subalgebra of $C(S^R)$. The converse is trivial.
        \end{proof}
    \end{theorem}

    Next, we show that the construction in Proposition \ref{prop:adequate} extends to a free construction functor from a category of pairs $(S,R)$, where $S$ is a semiconstellation and $R$ is an adequate relation on $S$, to the category of restriction semiconstellations. We begin by defining the categories involved. \\

    Denote by $(\mathcal{S} \times \mathcal{R})_0$ the family of pairs $(S,R)$, where $S$ is a semiconstellation and $R$ is any relation on $S$. Given two objects $(S,R), (S',R') \in (\mathcal{S} \times \mathcal{R})_0$, we say that a function $\varphi \colon S \to S'$ is a \textit{$(2,R)$-morphism} if it is a $(2)$-morphism such that $(\varphi(s),\varphi(t)) \in R'$, for all $(s,t) \in R$, and in this case we denote $\varphi \colon (S,R) \to (S',R')$. Thus, a $(2,R)$-morphism preserves both the semiconstellation structure and the prescribed relations. Let $(\mathcal{S} \times \mathcal{R})_1$ denote the family of all (2,$R$)-morphisms between objects of $(\mathcal{S} \times \mathcal{R})_0$. Identity maps are $(2,R)$-morphisms, and the composition of two $(2,R)$-morphisms is again a $(2,R)$-morphism. Therefore, the quintuple
        $$ \mathcal{S} \times \mathcal{R} = ((\mathcal{S} \times \mathcal{R})_0, (\mathcal{S} \times \mathcal{R})_1, \mathbf{d}, \mathbf{r}, \circ) $$
    is a category, where $\mathbf{d}$, $\mathbf{r}$ and $\circ$ are the usual domain, range, and composition of functions.\\
    
    On the other hand, restriction semiconstellations form a full subcategory $rSCON$ of the category $Alg(2,1)$. We define a functor $U \colon rSCON \to \mathcal{S} \times \mathcal{R}$ on objects by
        $$ U(S,\ast) = (S,R_\ast), $$
    where $R_\ast = \{ (s,t) \in S \times S \colon s^\ast = t^\ast \}$, and on morphisms by
        $$U(\varphi \colon (S,\ast) \to (S',\ast)) = \varphi \colon (S,R_\ast) \to (S',R'_\ast) $$
    To see that $U$ is well defined, let $\varphi \colon (S,\ast) \to (S',\ast)$ be a $(2,1)$-morphism. Clearly, $\varphi$ is a $(2)$-morphism. Moreover, if $(s,t) \in R_\ast$, then $s^\ast=t^\ast$, and hence  $\varphi(s)^\ast = \varphi(s^\ast) = \varphi(t^\ast) = \varphi(t)^\ast$. Thus, $(\varphi(s),\varphi(t)) \in R'_\ast$, so that $U$ associates a (2,1)-morphism $\varphi \colon (S,\ast) \to (S',\ast)$ to a (2,$R$)-morphism $U\varphi \colon U(S,\ast) \to U(S',\ast)$. Clearly $U$ preserves identities and composition, hence it is a functor.\\

    We can now reinterpret Proposition \ref{prop:adequate} in this setting. Let $(S,R)$ be an object of $\mathcal{S} \times \mathcal{R}$ with $R$ adequate. By Proposition \ref{prop:adequate}, $(S^R,\ast)$ is a restriction semiconstellation containing $S$ and the inclusion $\iota \colon S \to S^R$ is an injective strong (2)-morphism such that
        $$ R = \{ (\iota(s),\iota(t)) \colon (s,t) \in R \} = R_{\iota^\ast} \subseteq \{ (x,y) \in S^R \times S^R \colon x^\ast = y^\ast \}. $$
    Consequently, $\iota \colon (S,R) \to U(S^R,\ast)$ is a (2,$R$)-morphism in $\mathcal{S} \times \mathcal{R}$. The following theorem shows that this construction is universal.

    \begin{theorem} \label{teo:adequate}
        Let $R$ be an adequate relation on $S$. Then $(\iota,(S^R,\ast))$ is free over $(S,R)$.
    
        \begin{proof}
            Let $(T,\ast)$ be a restriction semiconstellation and let $\varphi \colon (S,R) \to U(T,\ast)$ be a $(2,R)$-morphism. Suppose that $\Phi \colon (S^R,\ast) \to (T,\ast)$ is a $(2,1)$-morphism satisfying $U\Phi \circ \iota = \varphi$. Then, for all $s, \overline{s} \in S^R$, we have
                $$ \Phi(s) = \Phi(\iota(s)) = \varphi(s) \quad\text{and}\quad \Phi(\overline{s}) = \Phi(\iota(s)^\ast) = \Phi(\iota(s))^\ast = \varphi(s)^\ast. $$
            That is, a (2,1)-morphism satisfying $U\Phi \circ \iota = \varphi$, if it exists, is uniquely determined. It remains to prove its existence. Define $\Phi \colon S^R \to T$ by $$\Phi(s) = \varphi(s) \quad \text{and} \quad \Phi(\overline{s}) = \varphi(s)^\ast.$$  We first show that $\Phi$ is well defined.
    
            If $\overline{s}=\overline{t}$, then $(s,t)\in R$. Since $\varphi \colon (S,R) \to U(T,\ast)$ is a (2,$R$)-morphism, we have that $\varphi(s)^\ast = \varphi(t)^\ast$. Therefore, $\Phi(\overline{s}) = \varphi(s)^\ast$ is independent of the choice of representative. Since $\overline{s} \in (S^R)^\ast$, it follows from Lemma \ref{lema:restriction}(2) that
                $$ \Phi(s^\ast) = \Phi(\overline{s}^\ast) = \Phi(\overline{s}) = \varphi(s)^\ast = \Phi(s)^\ast = \Phi(\overline{s})^\ast. $$
            To show that $\Phi$ preserves composition, we split the proof in four cases:
            \begin{itemize}
                \item If $s \circ t$ is defined, then $st$ is defined. Hence,
                    $$ \Phi(st) = \varphi(st) = \varphi(s)\varphi(t) = \Phi(s)\Phi(t). $$
    
                \item If $s \circ \overline{t}$ is defined, then $(s,t) \in R$. Thus, $\Phi(s)^\ast = \varphi(s)^\ast = \varphi(t)^\ast = \Phi(\overline{t})$ and
                    $$ \Phi(s \circ \overline{t}) = \Phi(s) = \varphi(s) \overset{\eqref{r1}}{=} \varphi(s)\varphi(s)^\ast = \Phi(s)\Phi(\overline{t}). $$
    
                \item If $\overline{s} \circ t$ is defined, then $st$ is defined. Since $(t,st) \in R$, we have $\overline{t} = \overline{st}$. Therefore,
                \begin{align*}
                    \Phi(\overline{s} \circ t) &= \Phi(t) \overset{\eqref{r1}}{=} \Phi(t)\Phi(t)^\ast = \Phi(t) \Phi(\overline{t}) = \Phi(t) \Phi(\overline{st}) \\
                    &= \varphi(t) \varphi(st)^\ast = \varphi(t) (\varphi(s)\varphi(t))^\ast \overset{\eqref{r4}}{=} \varphi(s)^\ast \varphi(t) = \Phi(\overline{s}) \Phi(t).
                \end{align*}
    
                \item If $\overline{s} \circ \overline{t}$ is defined, then $\overline{s} = \overline{t}$, and in this case
                    $$ \Phi(\overline{s} \circ \overline{s}) = \Phi(\overline{s}) = \varphi(s)^\ast \overset{\ref{lema:restriction}(1)}{=} \varphi(s)^\ast \varphi(s)^\ast = \Phi(\overline{s}) \Phi(\overline{s}). $$
            \end{itemize}
            This concludes that $\Phi$ is the unique (2,1)-morphism satisfying $U\Phi \circ \iota = \varphi$.
        \end{proof}
    \end{theorem}

    Denote by $\mathcal{S} \times \mathcal{A}$ the full subcategory of $\mathcal{S} \times \mathcal{R}$ consisting of pairs $(S,R)$ such that $R$ is an adequate relation on $S$. It follows from Theorem \ref{teo:adequate} and Proposition \ref{prop:livre} that there is a free construction functor $\mathcal{S} \times \mathcal{A} \to rSCON$, which associates each pair $(S,R)$ to the restriction semiconstellation $(S^R,\ast)$.

    \subsection{Examples} We characterize the adequate relations on a semiconstellation $S$ in several particular cases: when $S$ is a restriction semiconstellation, a right zero semigroup, a free semigroup, or the semiconstellation $S_m^n$ introduced in Example \ref{exe:semiconst}. \\
    
    Firstly, we show that a restriction semiconstellation admits a unique adequate relation.
    
    \begin{exe} \label{exe:adequate-1}
        Assume that $S$ admits a restriction semiconstellation structure $(S,\ast)$. We show that $R_m = R_M$ and, consequently, $R_M = \{(s,t) \colon S^s = S^t\}$ is the unique adequate relation on $S$. By definition, we have $R_m \subseteq R_M$. Conversely, let $(s,t) \in R_M$. From \eqref{r1}, we obtain
            $$ (s^\ast,s) = (s^\ast,ss^\ast) \in R_m \quad\text{and}\quad (t^\ast,t) = (t^\ast, tt^\ast) \in R_m. $$
        Since $(s,t) \in R_M$, it follows from Remark \ref{obs:restriction} that $S^{s^\ast} = S^s = S^t = S^{t^\ast}$. Therefore, $s^\ast t^\ast$ and $t^\ast s^\ast$ are defined, from where $(t^\ast, s^\ast t^\ast) \in R_m$ and $(s^\ast, t^\ast s^\ast) \in R_m$. Finally, since $R_m$ is an equivalence relation, we conclude that
            $$ (s^\ast, s), (s^\ast, s^\ast t^\ast) \overset{\eqref{r2}}{=} (s^\ast, t^\ast s^\ast), (t^\ast, s^\ast t^\ast), (t^\ast, t) \in R_m \implies (s,t) \in R_m. $$
        This shows that $R_M \subseteq R_m$ and, hence, $R_m = R_M$ is the unique adequate relation on $S$.
    \end{exe}

    To illustrate the previous example, suppose that $(S,+)$ is a restriction semigroupoid such that ${^sS} \neq \emptyset$, for every $s \in S$, and let $R = R_M$ be the unique adequate relation on $S$. Note that $R$ contains the relation $R'$ from Remark \ref{obs:adequate}, and \cite[Proposition 2.15]{haag2026c} shows that every restriction semigroupoid is categorical. Therefore, $(S^R,\ast)$ is also a semigroupoid. Moreover, it has a restriction category structure containing $(S,+)$ as a $(2,1)$-subalgebra.

    In fact, let $x \in S^R$. Since ${^sS} \neq \emptyset$, we may choose $t \in S$ such that $tx$ is defined. Define $\mathbf{d}(x) = x^\ast$ and $\mathbf{r}(x) = t^\ast$. Then $(S^R,\mathbf{d},\mathbf{r},\circ)$ is a category. Finally, redefine the restriction operation on the elements added to $S$ by setting $\overline{s}^\ast = \overline{s}$, for every $s \in S$. Thus, the quintuple $(S^R,\mathbf{d},\mathbf{r},\circ,\ast)$ is a restriction category containing $(S,+)$ as a $(2,1)$-subalgebra.

    This shows that the unique adequate relation on a restriction semigroupoid is precisely the relation that allows us to extend it to a restriction category by adjoining identities at the vertices of its underlying directed graph.\\

    For the second example, recall that a right zero semigroup is a semigroup $S$ whose operation is defined by $st = t$, for all $s,t \in S$. In contrast with the previous example, every equivalence relation on a right zero semigroup is an adequate relation.
    
    \begin{exe} \label{exe:adequate-2}
        Let $S$ be a right zero semigroup. We show that $R_m = \{(s,s) \colon s \in S\}$ and $R_M = S \times S$. In fact, since $S$ is a semigroup, we have $S^s = S^t = S$, for all $s,t \in S$. Therefore, $R_M = S \times S$. On the other hand, since $S$ is a right zero semigroup, we have $(t,st) = (t,t)$, for all $s,t \in S$. Therefore,
            $$ R_m = \mathcal{E}\{ (t,st) \colon s,t \in S \} = \mathcal{E}\{(t,t) \colon t \in S\} = \{(t,t) \colon t \in S\}. $$
        Since every equivalence relation contains $\{(s,s) \colon s \in S\} = R_m$ and is contained in $S \times S = R_M$, it follow that every equivalence relation on $S$ is an adequate relation.
    \end{exe}
    
    In the third example, we prove that adequate relations on a free semigroup $\langle X \rangle$ are in correspondence with equivalence relations on the generator set $X$. Recall that, given a non-empty set $X$, the free semigroup $\langle X \rangle$ is the family of finite sequences $x_n \dots x_1$, where $x_i \in X$, with operation given by concatenation of sequences.  
    
    \begin{exe} \label{exe:adequate-3}
        Let $S = \langle X \rangle$ be a free semigroup. Since $S$ is a semigroup, we have $R_M = S \times S$ as in Example \ref{exe:adequate-2}. Next, we show that for each $x \in S$, there is a unique $x_i \in X$ such that $(x_i,x) \in R_m$. In fact, if $x = x_1$, then $(x_1,x) = (x,x) \in R_m$ by reflexivity. Assume that $x = x_n \dots x_1$, where $n \geq 2$. Then setting $s = s_n \dots s_2$ and $t = x_1$, we obtain that
            $$ (x_1,x) = (x_1,x_n \dots x_2x_1) = (t,st) \in R_m. $$
        This shows that each $x \in S$ is related to some $x_i \in X$. For the uniqueness, suppose that $(x_i,x_j) \in R_m$, where $x_i,x_j \in X$. Since $R_m$ is the smallest equivalence relation containing the relation $L = \{(t,st) \colon s,t \in S\}$, there are $t^1,\dots,t^n \in S$ such that $x_i = t^1$, $t^n = x_j$ and
            $$ (t^i,t^{i+1}) \in L \quad\text{or}\quad (t^{i+1},t^i) \in L, \quad \forall i=1,\dots,n-1. $$
        But if $(t^i,t^{i+1}) \in L$, then there is $s^i \in S$ such that $(t^i,t^{i+1}) = (t^i,s^it^i)$. Writing $t^i = t^i_p \dots t^i_1$ and $s^i = s^i_q \dots s^i_1$, we obtain that $t^{i+1} = s^i_q \dots t^i_1$. Thus, $t^i$ and $t^{i+1}$ have the same first coordinate, namely, $t^i_1$. Analogously, if $(t^{i+1},t^i) \in L$, then the elements $t^{i+1}$ and $t^i$ have the same first coordinate. Hence,
            $$ x_i = t^1_1 = \dots = t^n_1 = x_j. $$
        This shows that each equivalence class of $R_m$ contains at most on element $x_i \in X$. Thus, each $x \in S$ is related to a unique $x_i \in X$ and $S/R_m \simeq X$ as sets. Consequently, $R$ is an adequate relation on $S$ if and only if $R$ is an equivalence relation containing $R_m$. Such equivalence relations are in correspondence with the equivalence relations on $S/R_m \simeq X$. 
    \end{exe}

    As noted in the previous examples, if $S$ is a semigroup, then the maximum adequate relation on $S$ is $R_M = S \times S$. In this case, the reduced semiconstellation $(S^{R_M},\ast)$ coincides with the monoid $S^1 = S \cup \{1\}$ obtained by appending an identity element to $S$, and the restriction structure is given by $s^\ast = 1$, for all $s \in S^1$.
    
    On the other hand, if $S$ is a semigroup and $R \neq S \times S$ is an adequate relation on $S$, then $(S^R,\ast)$ is a reduced semiconstellation that is not a semigroupoid. In fact, it follows from Remark \ref{obs:adequate} that if $S^R$ is a semigroupoid, then $R$ must contain the relation
        $$ R' = \mathcal{E}(\{(s,t) \colon S^s = S^t \neq \emptyset\} \cup \{(t,st) \colon \exists st\}) = S \times S. $$
    Thus, semiconstellations can be constructed even from semigroups.\\
    
    Lastly, we characterizes adequate relations on the semiconstellations $S_m^n$ from Example \ref{exe:semiconst}.
    
    \begin{exe} \label{exe:adequate-4}
        Let $S_m^n = \{ s_{i}^{j} \colon (i,j) \in I_{m+1} \times I_n \} \cup \{ d_i \colon i \in I_m \}$. From Example \ref{exe:semiconst}, we have $S^{d_i} = \{s_{i}^{j} \colon j \in I_n\}$, for every $i \in I_m$, and $S^{s_{i}^{j}} = \emptyset$, for every $(i,j) \in I_{m+1} \times I_n$. Therefore,
            $$ R_M = \{ (s_i^j, s_p^q) \colon i,p \in I_{m+1}, j,q \in I_n\} \cup \{ (d_i,d_i) \colon i \in I_m \}. $$
        That is, an adequate relation on $S_m^n$ must isolate the elements $d_1,\dots,d_m$. On the other hand, $st$ is defined if and only if $s = d_i$ and $t = s_i^j$, and in this case $d_is_i^j = s_{i+1}^j$. Thus,
            $$ R_m = \mathcal{E}\{ (s_i^j, s_{i+1}^j) \colon (i,j) \in I_m \times I_n \} = \{ (s_p^j,s_q^j) \colon p,q \in I_{m+1}, j \in I_n \} \cup \{ (d_i,d_i) \colon i \in I_m \}. $$
        Therefore, an adequate relation on $S_m^n$ is any equivalence relation that relates the elements $s_1^j,\dots,s_{m+1}^j$, for each $j=1,\dots,n$, and isolate the elements $d_1,\dots,d_m$. Since
            $$ S_m^n/R_m \simeq \{ s_1^1, \dots, s_1^n, d_1, \dots, d_m \}, $$
        we conclude that adequate relations on $S_m^n$ correspond to equivalence relations on $\{s_1^1,\dots,s_1^n\}$.
    \end{exe}

    Note that the reduced semiconstellations $(T_m^n,\ast)$ from Example \ref{exe:restriction} are precisely the restriction semiconstellations $((S_m^n)^{R_M},\ast)$, where $R_M$ is the maximum adequate relation on $S_m^n$. Since, by the previous example, adequate relations on $S_m^n$ are in correspondence with equivalence relations on the set $\{ s_1^1,\dots,s_1^n \}$, it follows that $S_m^n$ has a unique adequate relation if and only if $n = 1$. This is the case for the semiconstellation $S = S_2^1$ from Example \ref{exe:S21}.

    \section{The globalization problem for actions of \texorpdfstring{$S$}{}} \label{sec:6}

    \subsection{Actions of semiconstellations} We introduce partial and $R$-compatible actions of $S$ on sets, where $R$ is an adequate relation on $S$. We shows that relative $S$-subalgebras of $R$-compatible actions are partial actions and formulate an analogue to the globalization problem for actions of semiconstellations.\\
    
    In this section, $S$ is a semiconstellation, $R$ is an adequate relation on $S$ and $X$ is a set. We denote by $\iota \colon S \to S^R$ the inclusion of $S$ into the reduced semiconstellation $(S^R,\ast)$ constructed in Proposition \ref{prop:adequate}.
    
    \begin{defi} \label{defi:Sactions}
        A \textit{partial action} of $S$ is a pair $(X;\alpha)$, where $\alpha = \{\alpha_s\}_{s \in S}$ is a family of partial unary operations on $X$ satisfying the following conditions:
        \begin{enumerate} \Not{P}
            \item If $st$ is defined, then $\alpha_t^{-1}({_sX} \cap X_t) = {_{st}X} \cap {_tX}$.
            \item If $st$ is defined, then $\alpha_s(\alpha_t(x)) = \alpha_{st}(x)$, for all $x \in {_{st}X} \cap {_tX}$.
        \end{enumerate}
        A pair $(X;\alpha)$ as above is called an \textit{$R$-compatible action} if it is a partial action such that
        \begin{align*}
            {_sX} = {_tX}, \ \forall (s,t) \in R. \tag{C} \label{Rc}
        \end{align*}
    \end{defi}
    
    We provide an analogue of Proposition \ref{prop:actions} for partial actions of semiconstellations.
    
    \begin{defi}
        Let $(T,\ast)$ be a restriction semiconstellation. A function $\varphi \colon S \to T$ is called:
        \begin{itemize}
            \item A \textit{(2,$R$)-morphism} if it is a (2)-morphism such that $\varphi(s)^\ast = \varphi(t)^\ast$, for all $(s,t) \in R$.
            \item A \textit{semi-premorphism} if it satisfies \eqref{pm1}.
        \end{itemize}
    \end{defi}

    \begin{obs} \label{obs:2Rmorphism}
        In the previous section, we used the term (2,$R$)-morphism to nominate the arrows of the category $\mathcal{S} \times \mathcal{R}$. More precisely, given $(S,R), (S',R') \in \mathcal{S} \times \mathcal{R}$, we called a function $\varphi \colon S \to S'$ a (2,$R$)-morphism if it is a (2)-morphism such that
            $$ (\varphi(s),\varphi(t)) \in R', \quad \forall (s,t) \in R. $$
        We observe that if $(T,\ast)$ is a restriction semiconstellation, then a function $\varphi \colon S \to T$ is a (2,$R$)-morphism, as defined above, if and only if it is a (2,$R$)-morphism $\varphi \colon (S,R) \to U(T,\ast)$ in the category $\mathcal{S} \times \mathcal{R}$, where $U(T,\ast) = (T,R_\ast)$ and $R_\ast = \{ (s,t) \in T \times T \colon s^\ast = t^\ast \}$. Therefore, the terminology above is consistent with that introduced in the previous section.
    \end{obs}
    
    \begin{prop} \label{prop:actions-2}
        Let $(X;\alpha)$ be a $S$-algebra and denote by $\alpha \colon S \to PT(X)$ the function given by $\alpha(s) = \alpha_s$, for all $s \in S$. Then:
        \begin{enumerate}
            \item $(X;\alpha)$ is a partial action if and only if $\alpha \colon S \to PT(X)$ is a semi-premorphism.
            \item $(X;\alpha)$ is an $R$-compatible action if and only if $\alpha \colon S \to PT(X)$ is a (2,$R$)-morphism.
        \end{enumerate}
    
        \begin{proof}
            The proof of (1) is identical to Proposition \ref{prop:actions}(1). To show (2), suppose that $(X;\alpha)$ is an $R$-compatible action. If $(s,t) \in R$, then
                $$ \alpha_s^\ast = id_{_sX} \overset{\eqref{Rc}}{=} id_{_tX} = \alpha_t^\ast. $$
            On the other hand, since an $R$-compatible action is, in particular, a partial action, it follows from (1) that $\alpha$ is a semi-premorphism. Therefore, if $st$ is defined, then $(t,st) \in R$ and, hence
            \begin{align*}
                \alpha_s \star \alpha_t \overset{\eqref{pm1}}{=} \alpha_{st} \star \alpha_t^\ast \overset{\eqref{Rc}}{=} \alpha_{st} \star \alpha_{st}^\ast \overset{\eqref{r1}}{=} \alpha_{st}.
            \end{align*}
            This shows that $\alpha \colon S \to PT(X)$ is a (2,$R$)-morphism. Conversely, suppose that $\alpha$ is a (2,$R$)-morphism. The function $\alpha$ is a semi-premorphism since, if $st$ is defined, then
                $$ \alpha_s \star \alpha_t \overset{\eqref{r1}}{=} \alpha_s \star \alpha_t \star \alpha_t^\ast = \alpha_{st} \star \alpha_t^\ast. $$
            By (1), the pair $(X;\alpha)$ is a partial action of $S$. Furthermore,
                $$(s,t) \in R \implies id_{_sX} = \alpha_s^\ast = \alpha_t^\ast = id_{_tX} \implies {_sX} = {_tX}. $$
            That is, condition \eqref{Rc} holds, completing the proof.
        \end{proof}
    \end{prop}
    
    For a restriction semiconstellation $(T,\ast)$, we denote by $\mathcal{A}_p(T,\ast)$ and $\mathcal{A}(T,\ast)$ the categories of partial and global actions of $(T,\ast)$, respectively. In the sequel, we denote by $\mathcal{A}_p(S)$ and $\mathcal{A}_R(S)$ full subcategories of $Alg(S)$ whose objects are the partial and $R$-compatible actions of $S$, respectively. We denote the inclusion functors by
        $$ I_{(T,\ast)} \colon \mathcal{A}(T,\ast) \to \mathcal{A}_p(T,\ast) \quad\text{and}\quad I_S \colon \mathcal{A}_R(S) \to \mathcal{A}_p(S). $$
    
    By Proposition \ref{prop:universal}, an isomorphism between (2,1,1)-algebras is a bijective strong (2,1,1)-morphism. Regarding a category $\mathcal{C}$ as the (2,1,1)-algebra $(\mathcal{C}_1,\mathbf{d},\mathbf{r})$, it follows that an isomorphism $\mathcal{C} \to \mathcal{D}$ is precisely a fully faithful functor $F \colon \mathcal{C} \to \mathcal{D}$ that is bijective on objects.
    
    \begin{prop} \label{prop:actions-3}
        The inclusion $\iota \colon S \to S^R$ induces a functor $(-)|_S \colon \mathcal{A}_p(S^R,\ast) \to \mathcal{A}_p(S)$ whose restriction to $\mathcal{A}(S^R,\ast)$ yields an isomorphism of categories $\mathcal{A}(S^R,\ast) \to \mathcal{A}_R(S)$.
    
        \begin{proof}
            Since $\iota \colon S \to S^R$ in the inclusion of $S$ in $S^R = S \cup S/R$, for every function $\varphi \colon S^R \to T$, the composition $\varphi \circ \iota$ coincides with the restriction $\varphi|_S \colon S \to T$. Thus,
            \begin{align*}
                (X;\alpha) \in \mathcal{A}_p(S^R,\ast) &\overset{\ref{prop:actions}(1)}{\iff} \alpha \colon (S^R,\ast) \to PT(X) \text{ is a premorphism} \\
                &\implies \alpha \circ \iota \colon S \to PT(X) \text{ is a semi-premorphism} \\
                &\overset{\ref{prop:actions-2}(1)}{\iff} (X;\alpha \circ \iota) \in \mathcal{A}_p(S).
            \end{align*}
            On the other hand, let $\varphi \colon (X;\alpha) \to (Y;\beta)$ be a $S^R$-morphism. Since $S \subseteq S^R$, we obtain that $\varphi \colon (X;\alpha|_S) \to (Y;\beta|_S)$ is a $S$-morphism. Define $(-)|_S \colon \mathcal{A}_p(S^R,\ast) \to \mathcal{A}_p(S)$ by
                $$ (X;\alpha)|_S = (X;\alpha|_S) \quad\text{and}\quad \varphi|_S = \varphi. $$
            From $id_{(X;\alpha)}|_S = id_X = id_{(X;\alpha|_S)}$ and $(\varphi \circ \psi)|_S = \varphi \circ \psi = \varphi|_S \circ \psi|_S$, whenever the composition is defined, we conclude that $(-)|_S$ is a functor. Next, we show that the restriction of the functor $(-)|_S$ to the category $\mathcal{A}(S^R,\ast)$ is an isomorphism $\mathcal{A}(S^R,\ast) \to \mathcal{A}_R(S)$. Note that
            \begin{align*}
                (X;\alpha) \in \mathcal{A}(S^R,\ast) &\overset{\ref{prop:actions}(2)}{\iff} \alpha \colon (S^R,\ast) \to PT(X) \text{ is a (2,1)-morphism},
            \end{align*}
            and
            \begin{align*}
                (X;\alpha) \in \mathcal{A}_R(S) &\overset{\ref{prop:actions-2}(2)}{\iff} \alpha \colon S \to PT(X) \text{ is a (2,$R$)-morphism} \\
                &\overset{\ref{obs:2Rmorphism}}{\iff} \alpha \colon (S,R) \to U(PT(X)) \in \mathcal{S} \times \mathcal{R}.
            \end{align*}
            By Theorem \ref{teo:adequate}, the pair $(\iota,(S^R,\ast))$ is free over $(S,R)$. Hence, for each $(T,\ast) \in rSCON$, the association
                $$ \varphi \mapsto U\varphi \circ \iota = \varphi|_S, \quad \forall \varphi \colon (S^R,\ast) \to (T,\ast) \in rSCON, $$
            is a bijective correspondence between (2,1)-morphisms $(S^R,\ast) \to (T,\ast)$ and (2,$R$)-morphisms $(S,R) \to U(T,\ast)$ in $\mathcal{S} \times \mathcal{R}$. From Remark \ref{obs:2Rmorphism} we conclude that $\alpha \mapsto \alpha|_S$ is a bijection between global actions of $(S^R,\ast)$ on $X$ and $R$-compatible actions of $S$ on $X$. That is, $(-)|_S \colon \mathcal{A}(S^R,\ast) \to \mathcal{A}_R(S)$ is a well defined functor that is bijective in objects.
            
            Lastly, let $(X;\alpha), (Y;\beta) \in \mathcal{A}(S^R,\ast)$. Since $(-)|_S \colon \mathcal{A}(S^R,\ast) \to \mathcal{A}_R(S)$ is a functor, if $\varphi \colon (X;\alpha) \to (Y;\beta)$ is a $S^R$-morphism, then $\varphi|_S = \varphi \colon (X;\alpha|_S) \to (Y;\beta|_S)$ is a $S$-morphism. Conversely, suppose that $\varphi \colon (X;\alpha|_S) \to (Y;\beta|_S)$ is a $S$-morphism. Then
                $$ \forall s \in S,\ [x \in {_sX} \implies \varphi(x) \in {_sY} \text{ and } \varphi(\alpha_s(x)) = \beta_s(\varphi(x))]. $$
            On the other hand, let $e \in S^R \setminus S = S/R = S^\ast$. Then there is $s \in S$ such that $e = s^\ast$. Hence,
                $$ x \in {_eX} \overset{\eqref{P5}}{=} {_sX} \implies \varphi(x) \in {_sY} \overset{\eqref{P5}}{=} {_eY} \quad\text{and}\quad \varphi(\alpha_e(x)) \overset{\eqref{P3}}{=} \varphi(x) \overset{\eqref{P3}}{=} \beta_e(\varphi(x)). $$
            This shows that $\varphi \colon (X;\alpha) \to (Y;\beta)$ is a $S^R$-morphism. Therefore, $(-)|_S$ induces a bijection between $S^R$-morphisms $(X;\alpha) \to (Y;\beta)$ and $S$-morphisms $(X;\alpha|_S) \to (Y;\beta|_S)$, concluding that $(-)|_S$ is a fully faithful functor and, hence, an isomorphism of categories.
        \end{proof}
    \end{prop}
    
    Similarly to actions of restriction semiconstellations, we say that the relative $S$-subalgebra $(Y;\beta)|_X$ of an $R$-compatible action of $S$ is the \textit{restriction} of $(Y;\beta)$ to the subset $X$.
    
    \begin{corollary} \label{coro:Rrestriction}
        The restriction of an $R$-compatible action of $S$ is a partial action of $S$.
    
        \begin{proof}
            Let $(Y;\beta)$ be an $R$-compatible action of $S$ and $X$ be a subset of $Y$. Then $(Y;\beta)|_S^{-1}$ is a global action of $(S^R,\ast)$. By Proposition \ref{prop:relative}, the relative $S^R$-subalgebra $(X;\alpha)$ of $(Y;\beta)|_S^{-1}$ is a partial action of $(S^R,\ast)$. Hence, $(X;\alpha)|_S$ is a partial action of $S$. But $(X;\alpha)|_S$ is precisely the relative $S$-algebra of $(Y;\beta)$ obtained by restricting $(Y;\beta)$ to $X$.
        \end{proof}
    \end{corollary}

    Since restrictions of $R$-compatible actions are partial actions, it is natural to question: \textbf{whether a partial action can be obtained by restricting an $R$-compatible action?} In the remainder of this section, we will show that the answer to this question lies within the globalization problem for partial actions of reduced semiconstellations.

    \subsection{Reducing the problem} We extend the functor $(-)|_S^{-1} \colon \mathcal{A}_R(S) \to \mathcal{A}(S^R,\ast)$ to a right inverse for the functor $(-)|_S \colon \mathcal{A}_p(S^R,\ast) \to \mathcal{A}_p(S)$, concluding that a partial action of $S$ is isomorphic to a restriction of an $R$-compatible action if and only if the corresponding partial action of $(S^R,\ast)$ is globalizable.\\
    
    In Proposition \ref{prop:actions-3}, the functor $(-)|_S \colon \mathcal{A}(S^R,\ast) \to \mathcal{A}_R(S)$ is an isomorphism of categories because, by Theorem \ref{teo:adequate}, every $R$-compatible action of $S$ extends uniquely to a global action of $(S^R,\ast)$ by setting $\alpha_{\overline{s} } =id_{_sX}$, for every $\overline{s}\in S/R$. The next result shows that every partial action of $S$ also admits an extension to a partial action of $(S^R,\ast)$, although such an extension need not be unique.
    
    \begin{lemma} \label{lema:actions}
        A partial action of $S$ extends to a partial action of $(S^R,\ast)$ if and only if 
            $$ \forall \overline{s} \in S/R, \quad \alpha_{\overline{s}} = id_{_{\overline{s}}X}, \quad\text{where}\quad \left( \bigcup_{t \in \overline{s}} {_tX} \right) \cup \left( \bigcup_{t \in S^s} X_t \right) \subseteq {_{\overline{s}}X}. $$
        In particular, the functor $(-)|_S \colon \mathcal{A}_p(S^R,\ast) \to \mathcal{A}_p(S)$ is surjective on objects.
    
        \begin{proof}
            Let $(X;\alpha)$ be a partial action of $S$. Note that, for $t \in S \subseteq S^R$, we have $t^\ast = \overline{s}$ if and only if $t \in \overline{s}$. Therefore, if $(X;\alpha)$ extends to a partial action $(X;\beta)$ of $(S^R,\ast)$, then
                $$ \alpha_{\overline{s}} \overset{\eqref{P3}}{=} id_{_{\overline{s}}X} \quad\text{and}\quad {_tX} \overset{\eqref{P4}}{\subseteq} {_{t^\ast}X} = {_{\overline{s}}X}, \ \forall t \in \overline{s}. $$
            On the other hand, $\overline{s}t$ is defined if and only if $st$ is defined, and in this case $\overline{s}t = t$. Hence,
                $$ X_t = \beta_t({_tX}) = \beta_t({_{\overline{s}t}X} \cap {_tX}) \overset{\eqref{P1}}{=} \beta_t(\beta_t^{-1}(X_t \cap {_{\overline{s}}X})) = X_t \cap {_{\overline{s}}X} \implies X_t \subseteq {_{\overline{s}}X}, \ \forall t \in S^s. $$
            This shows that a partial action of $(S^R,\ast)$ extending $(X;\alpha)$ must satisfy $\alpha_{\overline{s}} = id_{_{\overline{s}}X}$, where ${_{\overline{s}}X}$ contains the subsets ${_tX}$, for all $t \in \overline{s}$, and $X_t$, for all $t \in S^s$.
                
            Conversely, for each $\overline{s} \in S/R$, choose a subset ${_{\overline{s}}X}$ of $X$ containing ${_tX}$, for all $t \in \overline{s}$, and $X_t$, for all $t \in S^s$. Let $\alpha_{\overline{s}} = id_{_{\overline{s}}X}$. Then $(X;\{\alpha_a\}_{a \in S^R})$ is a partial action of $(S^R,\ast)$. In fact, conditions \eqref{P3} and \eqref{P4} hold by definition, while conditions \eqref{P1} and \eqref{P2} are satisfied whenever $s,t \in S$ and $st$ is defined. It is straightforward to verify conditions \eqref{P1} and \eqref{P2} when $\overline{s}t$, $s\overline{t}$ or $\overline{s}\overline{t}$ are defined.
    
            Note that every partial action $(X;\beta)$ of $(S^R,\ast)$ obtained above satisfies $(X;\beta|_S) = (X;\alpha)$. Therefore, the functor $(-)|_S \colon \mathcal{A}_p(S^R,\ast) \to \mathcal{A}_p(S)$ is surjective in objects. However, if there is $\overline{s} \in S/R$ such that
                $$ \left(\bigcup_{t \in \overline{s}} {_tX} \right) \cup \left(\bigcup_{t \in S^s} X_t \right) \neq X, $$
            then the partial action $(X;\beta)$ of $(S^R,\ast)$ satisfying $(X;\beta|_S) = (X;\alpha)$ is not unique.
        \end{proof}
    \end{lemma}
    
    \begin{prop} \label{prop:actions-4}
        There is a fully faithful functor $G_R \colon \mathcal{A}_p(S) \to \mathcal{A}_p(S^R,\ast)$ such that:
        \begin{enumerate}
            \item $G_R$ is a right inverse for $(-)|_S$. That is, $(-)|_S \circ G_R = id_{\mathcal{A}_p(S)}$.
            \item $G_R$ coincides with $(-)|_S^{-1}$ in $\mathcal{A}_R(S)$. That is, $G_R \circ I_S = I_{(S^R,\ast)} \circ (-)|_S^{-1}$.
        \end{enumerate}
    
        \begin{proof}
            Given a partial action $(X;\alpha)$ of $S$, we define
                $$ \alpha_{\overline{s}} = id_{_{\overline{s}}X}, \quad\text{where}\quad {_{\overline{s}}X} = \left( \bigcup_{t \in \overline{s}} {_tX} \right) \cup \left( \bigcup_{t \in S^s} X_t \right), $$
            for all $\overline{s} \in S/R$. By Lemma \ref{lema:actions}, the pair $G_R(X;\alpha) = (X;\{\alpha_a\}_{a \in S^R})$ is a partial action of $(S^R,\ast)$ such that $G_R(X;\alpha)|_S = (X;\alpha)$. Furthermore, if $(Y;\beta)$ is a partial action of $S$, $\varphi \colon (X;\alpha) \to (Y;\beta)$ is a $S$-morphism and $\overline{s} \in S/R$, then
                $$ x \in {_{\overline{s}}X} \implies [\exists t \in \overline{s} \colon x \in {_tX}] \text{ or } [\exists t \in S^s \colon x \in X_t]. $$
            In the first case, we have $\varphi(x) \in {_tY} \subseteq {_{\overline{s}}Y}$. In the second case, we can find $y \in {_tX}$ such that $x = \alpha_t(y)$ and, hence,
                $$ \varphi(x) = \varphi(\alpha_t(y)) = \beta_t(\varphi(y)) \in Y_t \subseteq {_{\overline{s}}Y}. $$
            In both cases, we have $\varphi(\alpha_{\overline{s}}(x)) = \varphi(x) = \beta_{\overline{s}}(\varphi(x))$. Thus, $\varphi \colon G_R(X;\alpha) \to G_R(Y;\beta)$ is a $S^R$-morphism. It is straightforward to see that $\varphi \mapsto G_R\varphi = \varphi$ preserves identities and compositions. Therefore, we have a functor $G_R \colon \mathcal{A}_p(S) \to \mathcal{A}_p(S^R,\ast)$, given by
                $$ (X;\alpha) \mapsto G_R(X;\alpha) \quad\text{and}\quad \varphi \colon (X;\alpha) \to (Y;\beta) \mapsto \varphi \colon G_R(X;\alpha) \to G_R(Y;\beta). $$
                
            By construction, the functor $G_R$ satisfies $(-)|_S \circ G_R = id_{\mathcal{A}_p(S)}$, proving (1). To see that $G_R$ is a fully faithful functor, note that $G_R\varphi = \varphi$ as functions. Hence, $G_R\varphi = G_R\psi$ implies $\varphi = \psi$. On the other hand, let $\varphi \colon G_R(X;\alpha) \to G_R(Y;\beta)$ be a $S^R$-morphism. Then $\varphi|_S \colon G_R(X;\alpha)|_S \to G_R(Y;\beta)$ is a $S$-morphism. From (1), we obtain $G_R(X;\alpha)|_S = (X;\alpha)$ and $G_R(Y;\beta)|_S = (Y;\beta)$. Therefore, $G_R(\varphi|_S) \colon G_R(X;\alpha) \to G_R(Y;\beta)$ is a $S^R$-morphism. Since $G_R(\varphi|_S) = \varphi$, every $S^R$-morphism $G_R(X;\alpha) \to (Y;\beta)$ is the image of a unique $S$-morphism $(X;\alpha) \to (Y;\beta)$ via the association $\varphi \mapsto G_R\varphi$.
            
            Finally, if $(X;\alpha)$ is an $R$-compatible action of $S$, then the unique global action $(X;\beta)$ of $(S^R,\ast)$ such that $(X;\beta)|_S = (X;\alpha)$ coincides with $G_R(X;\alpha)$. In fact, the functor $(-)|_S^{-1}$ extends $(X;\alpha)$ by defining $\alpha_{_{\overline{s}}X'} = id_{_{\overline{s}}X'}$, where ${_{\overline{s}}X'} = {_sX}$. Since $(X;\beta|_S) = (X;\alpha)$ and $s \in \overline{s}$, it follows from Lemma \ref{lema:actions} that
                $$ {_{\overline{s}}X'} = {_sX} \subseteq {_{\overline{s}}X} \subseteq {_{\overline{s}}X'}. $$
            Hence, ${_{\overline{s}}X'} = {_{\overline{s}}X}$ and $(X;\alpha)|_S^{-1} = G_R(X;\alpha)$, for all $(X;\alpha) \in \mathcal{A}_R(S)$, proving (2).
        \end{proof}
    \end{prop}
    
        Since $(S^R,\ast)$ is a reduced semiconstellation, it follows from Theorem \ref{teo:construction} that every partial action $(X;\alpha)$ of $(S^R,\ast)$ has a reflector $(\iota_X,(FX;\beta))$ in $\mathcal{A}(S^R,\ast)$. By Proposition \ref{prop:livre}, we obtain a free construction functor $F \colon \mathcal{A}_p(S^R,\ast) \to \mathcal{A}(S^R,\ast)$, which associates each partial action $(X;\alpha)$ of $(S^R,\ast)$ to the global action $F(X;\alpha) = (FX;\beta)$. Hence, we have a functor $F_R \colon \mathcal{A}_p(S) \to \mathcal{A}_R(S)$, defined by the composition
    \begin{center}
        \begin{tikzpicture}
            \tikzstyle{every path}=[draw,->];
    
            \node (partS) at (0,-1.5) {$\mathcal{A}_p(S)$};
            \node (partR) at (0,0) {$\mathcal{A}_p(S^R,\ast)$};
            \node (globR) at (5,0) {$\mathcal{A}(S^R,\ast)$};
            \node (globS) at (5,-1.5) {$\mathcal{A}_R(S)$};
    
            \path (partS) to node[left]{$G_R$} (partR);
            \path (partR) to node[above]{$F$} (globR);
            \path (globR) to node[right]{$(-)|_S$} (globS);
    
            \path[dashed] (partS) to node[above]{$F_R = (-)|_S \circ F \circ G_R$} (globS);
        \end{tikzpicture}
    \end{center}
    In particular, given $(X;\alpha) \in \mathcal{A}_p(S)$, we obtain a $S^R$-morphism $\iota_X \colon G_R(X;\alpha) \to FG_R(X;\alpha)$. Since $(-)|_S \circ G_R = id_{\mathcal{A}_p(S)}$, the function $\iota_X \colon (X;\alpha) \to F_R(X;\alpha)$ is a $S$-morphism.
    
    \begin{theorem} \label{teo:actions}
        Let $(X;\alpha)$ be a partial action of $S$. Then:
        \begin{enumerate}
            \item The pair $(\iota_X,F_R(X;\alpha))$ is a reflector for $(X;\alpha)$ in $\mathcal{A}_R(S)$.
    
            \item The partial action $(X;\alpha)$ is isomorphic to a restriction of an $R$-compatible action if and only if the corresponding partial action $G_R(X;\alpha)$ is globalizable.
        \end{enumerate}
    
        \begin{proof}
            Let $(\iota_X,FG_R(X;\alpha))$ be the reflector for $G_R(X;\alpha)$ in $\mathcal{A}(S^R,\ast)$. Then
            \begin{align*}
                (I_{(S^R,\ast)} \circ F \circ G_R)(X;\alpha) &= (I_{(S^R,\ast)} \circ id_{\mathcal{A}(S^R,\ast)} \circ F \circ G_R)(X;\alpha) \\
                &= (I_{(S^R,\ast)} \circ (-)|_S^{-1} \circ (-)|_S \circ F \circ G_R)(X;\alpha) & \ref{prop:actions-3} \\
                &= (G_R \circ I_S \circ (-)|_S \circ F \circ G_R)(X;\alpha) & \ref{prop:actions-4}(2) \\
                &= (G_R \circ I_S \circ F_R)(X;\alpha).
            \end{align*}
            That is, up to category inclusions, we have $FG_R(X;\alpha) = G_RF_R(X;\alpha)$. On the other hand, let $\varphi \colon (X;\alpha) \to (Y;\gamma)$ be a $S$-morphism, where $(Y;\gamma) \in \mathcal{A}_R(S)$. Since $G_R$ is a fully faithful functor and $FG_R(X;\alpha) = G_RF_R(X;\alpha)$, we have a bijective correspondence between $S$-morphisms $F_R(X;\alpha) \to (Y;\gamma)$ and $S^R$-morphisms $FG_R(X;\alpha) \to G_R(Y;\gamma)$. Moreover, we have a bijective correspondence between commutative diagrams
            \begin{center}
                \begin{tikzpicture}[yscale=3/4]
                    \tikzstyle{every path}=[draw,->];
    
                    \node (X) at (0,0) {$(X;\alpha)$};
                    \node (Y) at (0,-2) {$(Y;\gamma)$};
                    \node (FX) at (3,0) {$F_R(X;\alpha)$};
                    \path (X) to node[above]{$\iota_X$} (FX);
                    \path (X) to node[left]{$\varphi$} (Y);
                    \path (FX) to (Y);
    
                    \node at (5,-1) {and};
    
                    \node (X2) at (7,0) {$G_R(X;\alpha)$};
                    \node (Y2) at (7,-2) {$G_R(Y;\gamma)$};
                    \node (FX2) at (10,0) {$FG_R(X;\alpha)$};
                    \path (X2) to node[above]{$\iota_X$} (FX2);
                    \path (X2) to node[left]{$\varphi$} (Y2);
                    \path (FX2) to (Y2);
                \end{tikzpicture}
            \end{center}
            Since $G_R(Y;\gamma) = (Y;\gamma)|_S^{-1} \in \mathcal{A}(S^R,\ast)$ and $(\iota_X,FG_R(X;\alpha))$ is a reflector for $G_R(X;\alpha)$ in $\mathcal{A}(S^R,\ast)$, there is a unique $S^R$-morphism $\Phi \colon FG_R(X;\alpha) \to G_R(Y;\gamma)$ satisfying $\Phi \circ \iota_X = \varphi$, which corresponds to a unique $S$-morphism $\Phi \colon F_R(X;\alpha) \to (Y;\gamma)$ satisfying $\Phi \circ \iota_X = \varphi$. This shows that $(\iota_X,F_R(X;\alpha))$ is a reflector for $(X;\alpha)$ in $\mathcal{A}_R(S)$, proving (1).
            
            Now, it follows from (1) and Proposition \ref{prop:universal-2} that $(X;\alpha)$ is isomorphic to a restriction of an $R$-compatible action if and only if $\iota_X \colon (X;\alpha) \to F_R(X;\alpha)$ is an injective full $S$-morphism. On the other hand, by Corollary \ref{coro:reduced}, $G_R(X;\alpha)$ is globalizable if and only if $\iota_X \colon G_R(X;\alpha) \to FG_R(X;\alpha)$ is an injective full $S^R$-morphism. Clearly, $\iota_X \colon (X;\alpha) \to F_R(X;\alpha)$ is injective if and only if $\iota_X \colon G_R(X;\alpha) \to FG_R(X;\alpha)$ is injective.
            
            For the rest of the proof, we assume that $\iota_X$ is injective and denote $FG_R(X;\alpha) = (FX;\beta)$, the $S^R$-algebra constructed in Section \ref{sec:4}. Hence, if $\iota_X$ is a full $S^R$-morphism, then
                $$ \forall a \in S^R,\ [\iota_X(x) \in {_aFX} \text{ and } \beta_a(\iota_X(x)) \in im(\iota_X) \implies x \in {_aX}]. $$
            This property holds, in particular, for every $s \in S \subseteq S^R$. Therefore, $\iota_X$ a full $S$-morphism. Conversely, suppose that $\iota_X$ is a full $S$-morphism. Let $\overline{s} \in S/R$ and $x \in X$ be such that
                $$ \iota_X(x) \in {_{\overline{s}}FX} \quad\text{and}\quad \beta_{\overline{s}}(\iota_X(x)) \in im(\iota_X). $$
            Since $\beta_{\overline{s}} = id_{_{\overline{s}}FX}$, the second conditions is superfluous. On the other hand, $\iota_X(x) \in {_{\overline{s}}FX}$ means that $(1,x) \simeq (b,y)$, for some $(b,y) \in {_{\overline{s}}\overline{X}}$. By Lemma \ref{lema:technical}, we can choose $b \in (S^R)^{\overline{s}}$. If $b \in S/R$, then it must be $b = \overline{s}$, and in this case $(b,y) \in \overline{X}$ if and only if $y \in {_{b^\ast}X} = {_{\overline{s}}X}$. Thus,
                $$ \iota_X(x) = \overline{s} \otimes y \overset{(\simeq_\ast)}{=} 1 \otimes \alpha_{\overline{s}}(y) \overset{\eqref{P3}}{=} 1 \otimes y = \iota_X(y). $$
            But $\iota_X$ is injective, from where we obtain that $x = y \in {_{\overline{s}}X}$. Lastly, if $b \in S$, then
                $$ \iota_X(y) = 1 \otimes y \in {_bFX} \quad\text{and}\quad \beta_b(\iota_X(y)) = b \otimes y = 1 \otimes x = \iota_X(x) \in im(\iota(X)). $$
            Since $\iota_X$ is an injective full $S$-morphism, we obtain that $y \in {_bX}$, and in this case
                $$ \iota_X(x) = \beta_b(\iota_X(y)) = \iota_X(\alpha_b(y)) \implies x = \alpha_b(y) \in X_b. $$
            From the construction of $G_R(X;\alpha)$, we have $X_b \subseteq {_{\overline{s}}X}$, whenever $b \in S^s$. Therefore, $x \in {_{\overline{s}}X}$, concluding that $\iota_X$ is a full $S^R$-morphism.
        \end{proof}
    \end{theorem}
    
    From Theorem \ref{teo:actions}, we conclude that not every partial action of a semiconstellation is isomorphic to a restriction of an $R$-compatible action. In fact, recall that the reduced semiconstellation $(T_m^n,\ast)$ from Example \ref{exe:restriction} equals to $((S_m^n)^{R_M},\ast)$, where $S_m^n$ is the semiconstellation from Example \ref{exe:semiconst}, and note that the partial actions from Examples \ref{exe:reduced-2} and \ref{exe:unipotet-3} lie in the image of the functor $G_{R_M} \colon \mathcal{A}_p(S_2^1) \to \mathcal{A}_p(T_2^1,\ast)$. Since such partial actions are not globalizable, the corresponding partial actions of $S_2^1$ are not isomorphic to restrictions of $R_M$-compatible actions of $S_2^1$. Note also that, by Example \ref{exe:adequate-4}, $R_M$ is the unique adequate relation on $S_2^1$.
    
    The next result shows that, if $S$ is a semigroupoid, then every partial action of $S$ is isomorphic to the restriction of an $R$-compatible action of $S$, for any choice of adequate relation $R$ on $S$.
    
    \begin{corollary} \label{coro:Sactions}
        Let $S$ be a semigroupoid, $R$ be an adequate relation on $S$ and $(X;\alpha)$ be a partial action of $S$. Then $(X;\alpha)$ is isomorphic to a restriction of an $R$-compatible action.
    
        \begin{proof}
            We show that, if $S$ is a semigroupoid, then $\mathfrak{D}(S^R) \subseteq (S^R)^\ast$. By Theorem \ref{teo:suficient}, we obtain that $G_R(X;\alpha)$ is globalizable. Hence, by Theorem \ref{teo:actions}(2), $(X;\alpha)$ is isomorphic to a restriction of an $R$-compatible action of $S$.
    
            In fact, suppose that $d \in \mathfrak{D}(S^R) \cap S$. Then there exists $a \in S^R$ such that $da$ is defined and ${^{da}(S^R)} \neq {^{d}(S^R)}$. If $a \in S/R$, then $da = d$ and, thus, ${^{da}(S^R)} = {^{d}(S^R)}$, a contradiction. If $a \in S$, then $d,da \in S$, from where it follows that
                $$ {^{da}(S^R)} = {^{da}S} \cup \{ \overline{t} \colon t \in {^{da}S} \} \overset{\ref{prop:semiconst}}{=} {^{d}S} \cup \{ \overline{t} \colon t \in {^{d}S} \} = {^{d}(S^R)}, $$
            a contradiction. Thus, the initial assumption that $d \in \mathfrak{D}(S^R) \cap S$ is an absurd. Therefore,
                $$ \mathfrak{D}(S^R) \cap S = \emptyset \implies \mathfrak{D}(S^R) \subseteq S^R \setminus S = S/R = (S^R)^\ast. $$
        \end{proof}
    \end{corollary}
    
    Corollary \ref{coro:Sactions} generalizes the globalization theorem for partial actions of semigroupoid on sets \cite[Theorem 3.7]{haag2026a}. In this context, global actions of $S$ are precisely the $R'$-compatible actions, where $R'$ is the adequate relation from Remark \ref{obs:adequate}. In fact, \cite[Lemma 3.2]{haag2026a} shows that $R'$ is an adequate relation and every global action of $S$ is $R'$-compatible. The converse is straightforward. Therefore, \cite[Theorem 3.7]{haag2026a} coincides with Corollary \ref{coro:Sactions} for $R = R'$.

    \section{The particular case of semigroups} \label{sec:7}

    We now specialize the previous constructions to semigroups. This case deserves a separate discussion. Although every semigroup is a semiconstellation, the globalization result obtained in the previous section does not directly recover the globalization theorem for partial semigroup actions on sets given in \cite[Theorem 3.5]{kudryavtseva2023}. As we shall see, the difference lies in the way a partial action of a semigroup $S$ is extended to an action of the monoid $S^1$. 
    
    The aim of this section is to understand how the globalization theorem for partial semigroup actions fits into the constructions developed above. In particular, we shall compare the two resulting reflectors and show that, although they need not coincide, they arise from essentially the same construction.
    
    Throughout this section, $S$ denotes a semigroup, $R = S \times S = R_M$ denotes the maximum adequate relation on $S$ and $S^1 = S \cup \{1\} = S^{R}$ denotes the monoid obtained by adjoining an identity to $S$. We first recall the notions of partial, global and $R$-adequate actions in this setting.\\

    A partial action of a semigroup $S$, regarded as a semiconstellation, is a pair $(X;\alpha)$ consisting of a set $X$ and a family of partial functions $\alpha = \{\alpha_s\}_{s \in S}$ on $X$ such that
        $$ \forall s,t \in S, \quad \alpha_t^{-1}({_sX} \cap X_t) = {_{st}X} \cap {_tX} \quad\text{and}\quad \alpha_s(\alpha_t(x)) = \alpha_{st}(x), \quad \forall x \in {_{st}X} \cap {_tX}. $$
    This is equivalent to the \textit{strong partial semigroup actions} from \cite[Definition 2.3]{kudryavtseva2023}. The terminology \textit{partial action} in \cite{kudryavtseva2023} refers to a more general notion. However, \cite[Proposition 2.9]{kudryavtseva2023} shows that if a partial action can be globalized, then it is a strong partial action. Therefore, we refer to strong partial semigroup actions simply as partial actions.\\

    A \textit{global semigroup action} is a partial action $(X;\alpha)$ such that ${_sX} = X$, for all $s \in S$. This is equivalent to the function $\alpha \colon S \to PT(X)$ being a semigroup morphism whose image lies in the submonoid $T(X) = \{ f \in PT(X) \colon dom(f) = X \}$ of $PT(X)$. Clearly, such a function can be uniquely extended to a monoid morphism $S^1 \to T(X)$ by defining $\alpha(1) = id_X$, and every monoid morphism $S^1 \to T(X)$ restricts to a semigroup morphism $S \to T(X)$. Therefore, global semigroup actions correspond to monoid morphisms $S^1 \to T(X) \subseteq PT(X)$.
    
    On the other hand, an $R$-compatible action of $S$ is a partial action $(X;\alpha)$ such that ${_sX} = {_tX}$, for all $(s,t) \in R = S \times S$. Thus, it is a partial action where ${_sX}$ is constant. By Proposition \ref{prop:actions-3}, the $R$-compatible actions of $S$ are in bijection with global actions of the restriction semigroup $(S^1,\ast)$, endowed with the restriction structure given by $s^\ast = 1$, for all $s \in S^1$. The latter correspond to $(2,1)$-morphisms $(S^1,\ast) \to PT(X)$.

    It is straightforward that every monoid morphism $S^1 \to T(X) \subseteq PT(X)$ can be regarded as a $(2,1)$-morphism $(S^1,\ast) \to PT(X)$. Consequently, every global semigroup action is an $R$-compatible action, but the converse is not true. This raises the question:\\

    \noindent\textbf{Which $R$-compatible actions correspond to global semigroup actions?}\\

    We say that a partial action $(X;\alpha)$ is \textit{non-degenerate} if every element of $X$ lies in the domain or image of some function $\alpha_s$, that is, if it satisfies $\bigcup_{s \in S} ({_sX} \cup {X_s}) = X$. Every global semigroup action is a non-degenerate $R$-compatible action since ${_sX} = X$, for all $s \in S$. The converse is true. In fact, if $(X;\alpha)$ is an $R$-compatible action, then
        $$ {_sX} = {_tX}, \quad \forall (s,t) \in R = S \times S. $$
    From \eqref{P1} and ${_sX} = {_{ts}X}$, for any $t \in S$, we obtain that $X_s \subseteq {_tX}$. In particular, we have that ${_sX} \cup X_s = {_sX}$, for all $s \in S$. Therefore, if $(X;\alpha)$ is non-degenerate, then
        $$ X = \bigcup_{t \in S} ({_tX} \cup X_t) = \bigcup_{t \in S} {_tX} = {_sX}, \quad \forall s \in S. $$
    This shows that a non-degenerate $R$-compatible action $(X;\alpha)$ is a global semigroup action. Hence, global semigroup action are precisely the non-degenerate $R$-compatible actions of $S$. Before continuing the discussion, we fix and recall some notations.\\
    
    We are denoting by $\mathcal{A}_p(S)$ and $\mathcal{A}_R(S)$ the categories of partial and $R$-compatible actions of $S$, respectively. Let us denote by $\mathcal{A}(S)$ the category of global semigroup actions of $S$. Then, by the previous discussion, we have that
         $$ \mathcal{A}(S) \subseteq \mathcal{A}_R(S) \subseteq \mathcal{A}_p(S) $$
    as full subcategories. Note that we need not to define another category of partial semigroup actions, since this is already the category $\mathcal{A}_p(S)$. With this notation, Corollary \ref{coro:Sactions} provides a functor $F_R \colon \mathcal{A}_p(S) \to \mathcal{A}_R(S)$, whereas the globalization theorem from \cite{kudryavtseva2023} provides a functor $T \colon \mathcal{A}_p(S) \to \mathcal{A}(S)$. Both functors associate with a partial action $(X;\alpha)$ a reflector in the corresponding target category. Since $\mathcal{A}(S)$ is a proper subcategory of $\mathcal{A}_R(S)$ in general, these two reflectors need not coincide.

    To see this, suppose first that $(X;\alpha)$ is an $R$-compatible action. Since $(X;\alpha)$ already belongs to $\mathcal{A}_R(S)$, we have $F_R(X;\alpha) \simeq (X;\alpha)$. If $(X;\alpha)$ is degenerate, then $T(X;\alpha) \not\simeq (X;\alpha)$, since $T(X;\alpha)$ is a global semigroup action and hence non-degenerate. Therefore, $F_R(X;\alpha) \not\simeq T(X;\alpha).$ On the other hand, if $(X;\alpha)$ is non-degenerate, then, by the previous discussion, it is a global semigroup action. Hence, $F_R(X;\alpha) \simeq (X;\alpha)$ and $T(X;\alpha) \simeq (X;\alpha)$. 
    
    Thus, for an $R$-compatible action $(X;\alpha)$, we have $F_R(X;\alpha) \simeq T(X;\alpha)$ if and only if $(X;\alpha)$ is non-degenerate. This leads to the question:\\

    \noindent\textbf{For an arbitrary partial action $(X;\alpha)$, when do the actions $F_R(X;\alpha)$ and $T(X;\alpha)$ coincide?}\\

    The answer is the same: the actions $F_R(X;\alpha)$ and $T(X;\alpha)$ are isomorphic if and only if $(X;\alpha)$ is a non-degenerate partial action. This is a consequence of \cite[Proposition 3.8]{haag2026a}. More precisely, given a partial action $(X;\alpha)$, we denote
        $$ X_1 = \bigcup_{s \in S} ({_sX} \cup X_s) \quad\text{and}\quad X_0 = X \setminus X_1. $$
    That is, $X_1$ and $X_0$ are the non-degenerate and completely degenerate parts of $(X;\alpha)$, respectively. Let $F_R(X;\alpha) = (FX;\beta)$ be the globalization of $(X;\alpha)$ and $\iota \colon (X;\alpha) \to (FX;\beta)$ be the corresponding embedding. Then, the function $\iota$ restricts to a bijection $\iota \colon X_0 \to FX_0$. Consequently, $F_R(X;\alpha)$ is non-degenerate if and only if $X_0 \simeq FX_0 = \emptyset$ if and only if $(X;\alpha)$ is non-degenerate. This is the statement of \cite[Proposition 3.8]{haag2026a}, from where we obtain that
    \begin{align*}
        F_R(X;\alpha) \simeq T(X;\alpha) \iff F_R(X;\alpha) \text{ is non-degenerate} \iff (X;\alpha) \text{ is non degenerate}.
    \end{align*}
    
    We are now in a position to address the main question of this section:\\

    \noindent\textbf{How can the globalization theorem for partial semigroup actions be recovered from the constructions developed above?}\\

    Although Theorem \ref{teo:actions} does not recover this result directly, the construction behind it does. The key point is that the two globalization procedures differ in the way a partial action of $S$ is extended to an action of $S^1$.

    To make this precise, we first recall some notation. If $T$ is a monoid with identity $1_T$, a partial monoid action of $T$ is a partial semigroup action $(X;\alpha)$ satisfying $\alpha_{1_T}=id_X.$ A global monoid action is a global semigroup action satisfying the same condition. We denote the corresponding categories by $\mathcal{A}_p^1(T)$ and $\mathcal{A}^1(T)$, respectively.

    We now consider two different pairs of categories associated with $S^1$. Regarding $S^1$ as the restriction semigroup $(S^1,\ast)$, we have $\mathcal{A}(S^1,\ast) \subseteq \mathcal{A}_p(S^1,\ast)$. On the other hand, regarding $S^1$ simply as a monoid, we have $\mathcal{A}^1(S^1) \subseteq \mathcal{A}_p^1(S^1)$. Moreover, $\mathcal{A}^1(S^1) \subseteq \mathcal{A}(S^1,\ast)$ and $\mathcal{A}_p^1(S^1) \subseteq \mathcal{A}_p(S^1,\ast).$ These inclusions are summarized in the following diagram:

    \begin{center}
        \begin{tikzpicture}[xscale=1.2]
            \tikzstyle{every path}=[draw,->];

            \node (A1) at (0,0) {$\mathcal{A}^1(S^1)$};
            \node (A) at (-2,1) {$\mathcal{A}(S^1,\ast)$};
            \node (Ap1) at (2,1) {$\mathcal{A}_p^1(S^1)$};
            \node (Ap) at (0,2) {$\mathcal{A}_p(S^1,\ast)$};

            \path (A1) to (A); \path (A1) to (Ap1);
            \path (A) to (Ap); \path (Ap1) to (Ap);
        \end{tikzpicture}
    \end{center}

    By Corollary \ref{coro:suficient}, every partial restriction semigroup action is globalizable, and this globalization is pointed out by a functor $F \colon \mathcal{A}_p(S^1,\ast) \to \mathcal{A}(S^1,\ast)$. In the previous section, we constructed functors $(-)|_S \colon \mathcal{A}(S^1,\ast) \to \mathcal{A}_R(S)$, which associates a global action of $(S^1,\ast)$ to an $R$-adequate action by forgetting the action of the element $1$, and $G_R \colon \mathcal{A}_p(S) \to \mathcal{A}_p(S^1,\ast)$, which extends a partial action of $S$ to a partial action of $(S^1,\ast)$ by defining
        $$ {_1X} = \bigcup_{s \in S} ({_sX} \cup X_s) \quad\text{and}\quad \alpha_1 = id_{_1X}. $$
    By Lemma \ref{lema:actions}, the set ${_1X}$ is the smallest domain possible for $\alpha_1$ so the partial action $(X;\alpha)$ can be extended to a partial action of $(S^1,\ast)$. The condition $\alpha_1 = id_{_1X}$ is mandatory by \eqref{P3}. The composition
        $$ F_R := (-)_S \circ F \circ G_R \colon \mathcal{A}_p(S) \to \mathcal{A}_R(S) $$
    is the functor that associates a partial action $(X;\alpha)$ of $S$ to an $R$-adequate action whose restriction to a certain subset is isomorphic to $(X;\alpha)$.\\
    
    Similarly, it is proved in \cite[Theorem 3.5]{kudryavtseva2023} that every \textit{firm} partial semigroup action is globalizable, and every partial monoid action is firm. Consequently, there is a functor $t \colon \mathcal{A}_p^1(S^1) \to \mathcal{A}^1(S^1)$ which associates each partial monoid action to its globalization. Since $\mathcal{A}^1(S^1) \subseteq \mathcal{A}(S^1,\ast)$, we can restrict the functor $(-)|_S$ to $\mathcal{A}^1(S^1)$. The restriction
        $$ (-)_S \colon \mathcal{A}^1(S^1) \to \mathcal{A}(S) $$
    is an isomorphism of categories. In fact, it is enough to note that ${_sX} = X$, for all $s \in S$, implies that the $R$-compatible action $(X;\alpha)|_S$ is non-degenerate. Lastly, we claim that there is a functor $G_1 \colon \mathcal{A}_p(S) \to \mathcal{A}_p^1(S^1)$, which extends a partial action of $S$ to a partial monoid action of $S^1$ by defining
        $$ {_1X} = X \quad\text{and}\quad \alpha_1 = id_X. $$
    Furthermore, since a partial monoid actions must satisfy $\alpha_1 = id_X$, this is the unique way to extend $(X;\alpha)$ to a partial monoid action of $S^1$. Consequently, the functor $G_1$ is an isomorphism of categories. Therefore, the composition
        $$ T := (-)|_S \circ t \circ G_1 \colon \mathcal{A}_p(S) \to \mathcal{A}(S) $$
    is the functor that associate a partial action of $S$ to its globalization, in the sense of \cite{kudryavtseva2023}.\\
    
    The following diagram illustrate the globalization functors $F_R$ and $T$ as the composition of the remaining functors. The not labeled arrows are the category inclusions $\mathcal{A}^1(S^1) \subseteq \mathcal{A}(S^1,\ast)$ and $\mathcal{A}_p^1(S^1) \subseteq \mathcal{A}_p(S^1,\ast)$.

    \begin{center}
        \begin{tikzpicture}[xscale=1.2]
            \tikzstyle{every path}=[draw,->];

            \node (A1) at (0,0) {$\mathcal{A}^1(S^1)$};
            \node (A) at (-2,1) {$\mathcal{A}(S^1,\ast)$};
            \node (Ap1) at (2,1) {$\mathcal{A}_p^1(S^1)$};
            \node (Ap) at (0,2) {$\mathcal{A}_p(S^1,\ast)$};

            \path (A1) to (A);
            \path (Ap1) to (Ap);

            \node (Sp1) at (-3,2) {$\mathcal{A}_p(S)$};
            \node (SR1) at (-5,1) {$\mathcal{A}_R(S)$};

            \path (Sp1) to node[above]{$G_R$} (Ap);
            \path (A) to node[below]{$(-)|_S$} (SR1);
            \path (Ap) to node[below right]{$F$} (A);
            \path[dashed] (Sp1) to node[above left]{$F_R$} (SR1);

            \node (Sp2) at (5,1) {$\mathcal{A}_p(S)$};
            \node (SR2) at (3,0) {$\mathcal{A}(S)$};

            \path (Sp2) to node[above]{$G_1$} (Ap1);
            \path (A1) to node[below]{$(-)|_S$} (SR2);
            \path (Ap1) to node[above left]{$t$} (A1);
            \path[dashed] (Sp2) to node[below right]{$T$} (SR2);
        \end{tikzpicture}
    \end{center}

    Finally, we note that the functor $t$ is the restriction of the functor $F$ to $\mathcal{A}_p^1(S^1)$. In fact, it is enough to see that, if $(X;\alpha)$ is a partial action of $(S^1,\ast)$ such that $\alpha_1 = id_X$, then its globalization $F(X;\alpha) = (FX;\beta)$ satisfies $\beta_1 = id_{FX}$. This can be directly verified in the construction at the beginning of Section \ref{sec:4}: if $X = {_1X} = {_{s^\ast}X}$ for all $s \in S$, then $\overline{X} = S^1 \times X$, and thus ${_1\overline{X}} = \overline{X}$. Therefore,
        $$ {_1FX} = \{ a \otimes x \in FX \colon (a,x) \simeq (b,y) \text{ for some } (b,y) \in {_1\overline{X}} \} = FX, $$
    and the action $\beta_1 \colon FX \to FX$ is given by $\beta_1(a \otimes x) = a \otimes x$. Therefore, $\beta_1 = id_{FX}$. But then
        $$ F_R = (-)|_S \circ F \circ G_R \quad\text{and}\quad T = (-)|_S \circ F \circ G_1. $$
    That is, the difference between the globalizations obtained in \cite{kudryavtseva2023} and from Corollary \ref{coro:Sactions} is the replacement of the functor $G_R$ by the functor $G_1$, which extends a partial action of $S$ to a partial action of $S^1$ by defining ${_1X} = X$ instead of ${_1X} = \bigcup_{s \in S} ({_sX} \cup X_s)$.\\

    This gives another interpretation of \cite[Theorem 3.5]{kudryavtseva2023}: every partial action of a semigroup $S$ is isomorphic to the restriction of a non-degenerate $R$-compatible action of $S$. Furthermore, such non-degenerate $R$-compatible action is a reflector for the partial action in the category of non-degenerate $R$-compatible actions of $S$.
    
    We note that, for any semiconstellation $S$ and any adequate relation $R$ on $S$, Lemma \ref{lema:actions} allows us to extend a partial action $(X;\alpha)$ of $S$ to a partial action of $(S^R,\ast)$ by defining ${_{\overline{s}}X} = X$, for all $\overline{s} \in S/R$. It is therefore natural to ask whether the same idea can always be used to define a functor $G_1$ and, in this way, obtain a reflector in the category of non-degenerate $R$-compatible actions.

    In general, however, the answer is negative. As shown in \cite[Example 4.5]{haag2026a}, the statement that every partial action of a semigroupoid $S$ admits a non-degenerate globalization that is a reflector in the category of non-degenerate global actions of $S$ is false. Since the category $\mathcal{A}(S)$ of global semigroupoid actions coincides with the category $\mathcal{A}_{R'}(S)$ of $R'$-compatible actions, where $R'$ is the adequate relation from Remark \ref{obs:adequate}, the corresponding statement in terms of $R$-compatible actions is also false. Thus, although the globalization procedures for semigroups and semigroupoids are closely related, non-degeneracy plays a special role in the semigroup case.

    \section{Further directions and open problems} \label{sec:8}

    Some of the results obtained in this paper suggest natural extensions, while others lead to questions that we have not pursued here. In this final section, we discuss a selection of these questions.
    
    \subsection*{Reflectors and almost global actions}

    In Definition \ref{defi:almost}, we introduced the category $\mathcal{A}_a(S,\ast)$ of almost global actions of a restriction semiconstellation $(S,\ast)$, generalizing global actions of $(S,\ast)$ by weakening the partial action axioms \eqref{P1} and \eqref{P3}. By Theorem \ref{teo:construction}, every partial action $(X;\alpha)$ of $(S,\ast)$ has a reflector $(\iota,(FX;\beta))$ in the category $\mathcal{A}_a(S,\ast)$.

    By Theorems \ref{teo:suficient} and \ref{teo:reduced}, if $(S,\ast)$ is either a restriction semigroupoid or a reduced restriction semiconstellation, then the almost global action $(FX;\beta)$ is in fact global. This was enough to conclude that every partial action of a semigroupoid is globalizable, but the same does not hold for partial actions of reduced semiconstellations, and hence of restriction semiconstellations. This leads to the following question:\\

    \noindent\textbf{Is the almost global action $(FX;\beta)$ always a global action?}\\
    
    The main difficulty to answer this question positively is the lack of the property
        $$ [(s,x) \simeq (t,y) \text{ and } s,t \in S^p] \implies [(s,x) \in {_p\overline{X}} \iff (t,y) \in {_p\overline{X}}]. $$
    Under the additional assumption that disruptive elements act globally, Lemma \ref{lema:suficient} shows that this property holds for the relation $\simeq^{(1)}$, which in this case coincides with $\simeq$. As shown in the proof of Theorem \ref{teo:suficient}, this property alone is enough to guarantee that $(FX;\beta)$ is a global action.

    If the answer to the question above is positive, Proposition \ref{prop:construction-2} implies that a partial action $(X;\alpha)$ is globalizable if and only if the $S$-morphism $\iota \colon (X;\alpha) \to (FX;\beta)$ is an embedding. The globalization problem would then reduce to understanding the relation $\simeq$ well enough to determine when $\iota$ is an embedding.

    \subsection*{Non-reduced restriction semiconstellations}

    Section \ref{sec:5} was dedicated to the construction of the restriction semiconstellations $(S^R,\ast)$, where $R$ is an adequate relation on a semiconstellation $S$. In this construction, we wanted $S^R$ to contain $S$ as a (2)-subalgebra, and to be generated by $S$ as a $(2,1)$-algebra. These properties were useful in the construction of the functors between $\mathcal{A}_p(S^R,\ast)$ and $\mathcal{A}_p(S)$.
    
    Knowing that $S \subseteq S^R$ is a (2)-subalgebra if and only if the inclusion map is an (injective) strong (2)-morphism, we deduced in Lemma \ref{lema:adequate} that $R$ must be contained in the relation $R_M$. Asking $R$ to also contain the relation $R_m$ was useful in proving that $S^R$ is a semiconstellation, but restricts the construction to produce reduced semiconstellations. Thus:\\

    \noindent\textbf{How can the construction of the reduced restriction semiconstellations $(S^R,\ast)$ be generalized to obtain restriction semiconstellations that are not necessarily reduced?}\\

    For such a construction, it may be necessary to impose further conditions on the pairs $(S,R)$. For instance, if $(S^R,\ast)$ is not reduced, then it has a non-trivial natural partial order which is inherited by $S$, and the set $(S^R)^\ast = S/R$ is a disjoint union of semilattices. The following example may provide some indication of the additional conditions that are needed.

    Let $S$ be a semigroupoid and $R = R_M$. We denote by $\mathcal{P}_f(S,R)$ the family of finite non-empty subsets of the equivalence classes of $R$. In this case,
        $$ A \in \mathcal{P}_f(S,R) \iff A = \{s_1,\dots,s_n\} \quad\text{where}\quad S^{s_1} = \dots = S^{s_n}. $$
    Note that $\mathcal{P}_f(S,R)$ is a disjoint union of semilattices, where the partial order is reverse set inclusion. The set $T = \{ (A,a) \colon a \in A \in \mathcal{P}_f(S,R) \}$, endowed with the operation given by
        $$ \exists (A,a) \cdot (B,b) \iff \exists ab, \quad\text{and in this case}\quad (A,a) \cdot (B,b) = (Ab \cup B, ab), $$
    is again a semigroupoid. This semigroupoid has a partial order, given by $(A,a) \leq (B,b)$ if and only if $a = b$ and $B \subseteq A$. We define a relation $R_T$ on $T$ by
        $$ R_T = \{ ((A,a),(B,b)) \in T \times T \colon A = B \}.$$
    This is an equivalence relation that need not be adequate. Nevertheless, the set $T \cup T/R_T$ can be endowed with a restriction semiconstellation structure with the following properties: it contains $T$ as a $(2)$-subalgebra; it is generated by $T$ as a $(2,1)$-algebra; the set of projections coincides with $T/R_T$; and its natural partial order restricts to the natural partial order on $T$.

    The similarity between the second part of this construction and the one in Section \ref{sec:5} suggests that there may be a more general method for constructing restriction semiconstellations that are not necessarily reduced.

    \subsection*{Representation of restriction semiconstellations}

    As a first application of Proposition \ref{prop:adequate}, we obtained a representation theorem for semiconstellations, showing that, up to isomorphism, they are precisely the $(2)$-subalgebras of function constellations $C(X)$.

    The proof uses the fact that the reduced restriction semiconstellations $(S^R,\ast)$ are constellations, which allows us to apply a result from \cite{gould2009restriction} and obtain an injective strong $(2,1)$-morphism $(S^R,\ast) \to C(S^R)$. Thus, every reduced restriction semiconstellation is isomorphic to a $(2,1)$-subalgebra of a function constellation. The converse, however, does not hold: $(2,1)$-subalgebras of function constellations need not be reduced restriction semiconstellations.
    
    On the other hand, \cite[Theorem 3.9]{haag2026c} characterizes restriction semigroupoids, up to isomorphism, as the $(2,1)$-subalgebras of categories of partial functions $PT(\mathcal X)$, where $\mathcal X$ is a family of pairwise disjoint sets. This suggests the following question:\\

    \noindent\textbf{Can restriction semiconstellations be characterized by a suitable representation theorem?}

    \subsection*{Reflectivity of \texorpdfstring{$R$}{}-compatible actions}

    Our last question concerns the relation between reflectors for partial actions of semiconstellations and those for partial actions of restriction semiconstellations. Given a relation $R$ on a semiconstellation $S$, we defined $\mathcal{A}_R(S)$ as the full subcategory of $\mathcal{A}_p(S)$ whose objects are the partial actions $(X;\alpha)$ satisfying ${_sX} = {_tX}$, whenever $(s,t) \in R$.
    
    When $R$ is an adequate relation on $S$, we proved that $\mathcal{A}_R(S)$ is a reflective subcategory of $\mathcal{A}_p(S)$. The proof proceeds by relating the inclusion $\mathcal{A}_R(S) \subseteq \mathcal{A}_p(S)$ to the inclusion $\mathcal{A}(S,\ast) \subseteq \mathcal{A}_p(S,\ast)$, and then applying the construction of reflectors for partial actions of reduced restriction semiconstellations from Theorem \ref{teo:reduced}. 

    It is natural to ask whether the assumption that $R$ is adequate can be weakened. This is particularly relevant when $S$ already carries the structure of a restriction semiconstellation $(S,\ast)$. In this case, the unique adequate relation on $S$ is $R_m = \{ (t,st) \in S \times S \colon \exists st \}$. On the other hand, every global action of $(S,\ast)$ belongs to $\mathcal{A}_{R_\ast}(S)$, where $R_\ast = \{(s,t) \in S \times S \colon s^\ast = t^\ast \}$. The relation $R_\ast$ is adequate precisely when $(S,\ast)$ is reduced. Thus, for a general restriction semiconstellation, the reflectivity result for adequate relations cannot be applied directly to $\mathcal{A}_{R_\ast}(S)$.

    If $\mathcal{A}{R_\ast}(S)$ were reflective in $\mathcal{A}_p(S)$ under more general conditions, then the inclusions $\mathcal{A}(S,\ast) \subseteq \mathcal{A}_{R_\ast}(S)$ and $\mathcal{A}_p(S,\ast) \subseteq \mathcal{A}_p(S)$ could provide a way to recover reflectors for partial actions of $(S,\ast)$ from reflectors for partial actions of $S$. This would, in a sense, reverse the strategy used in Section \ref{sec:6}. We are therefore led to the following question:\\

    \noindent\textbf{For which relations $R$ is the category $\mathcal{A}_R(S)$ a reflective subcategory of $\mathcal{A}_p(S)$?} \\

    Adequacy is not necessary for reflectivity. A simple example is the diagonal relation $R = \{(s,s) \colon s \in S\}$. In this case, $\mathcal{A}_R(S) = \mathcal{A}_p(S)$, so $\mathcal{A}_R(S)$ is trivially reflective in $\mathcal{A}_p(S)$. However, $R$ is adequate if and only if $st = t$ whenever $st$ is defined. Thus, the class of relations for which $\mathcal{A}_R(S)$ is reflective may be strictly larger than the class of adequate relations.


    \subsection*{Acknowledgment}
    T. Tamusiunas was partially supported by Conselho Nacional de Desenvolvimento Científico e Tecnológico (Brazil) through a Productivity Research Fellowship, Grant No. 303411/2025-2, and by Conselho Nacional de Desenvolvimento Científico e Tecnológico (Brazil) under the Universal Call, Grant No. 403606/2025-0.

    \bibliographystyle{abbrvnat}
    {\small \label{sec:ref}\bibliography{ref}}
    
\end{document}